\documentclass{amsart}
\usepackage{amsthm}
\usepackage{amssymb}
\usepackage{tikz-cd}
\usepackage{colonequals}
\usepackage{enumerate}
\usepackage{eucal}
\usepackage[french,english]{babel}

\usepackage{hyperref}
\hypersetup{%
  bookmarksnumbered=true,%
  colorlinks=true,%
  linkcolor=blue,%
  citecolor=blue,%
  filecolor=blue,%
  menucolor=blue,%
  urlcolor=blue,%
  bookmarksopen=true,%
  bookmarksdepth=2,%
  pageanchor=true}

\makeatletter
\@namedef{subjclassname@2020}{\textup{2020} Mathematics Subject Classification}
\makeatother

\makeatletter

\makeatother


\numberwithin{equation}{section}

\swapnumbers

\theoremstyle{plain}
\newtheorem{theorem}[equation]{Theorem}
\newtheorem{proposition}[equation]{Proposition}
\newtheorem{lemma}[equation]{Lemma} 
\newtheorem{corollary}[equation]{Corollary}

\theoremstyle{definition}
\newtheorem{definition}[equation]{Definition}
\newtheorem{example}[equation]{Example}

\newtheorem*{claim}{Claim}

\theoremstyle{remark}
\newtheorem{remark}[equation]{Remark}


\hyphenation{co-limit}
\hyphenation{Grothen-dieck} 
\hyphenation{com-mu-ta-tive}

%
\newcommand{\Add}{\operatorname{Add}}
\newcommand{\cat}[1]{\mathcal{#1}}
\newcommand{\Coker}{\operatorname{Coker}}

\newcommand{\colim}{\operatorname{colim}}
\newcommand{\comp}{\mathop{\circ}}
\newcommand{\col}{\colon}

\newcommand{\hocolim}{\operatorname{hocolim}}
\newcommand{\dbcat}[1]{\mathbf{D}^{\mathrm{b}}(\mod #1)}
\newcommand{\dcat}[1]{\mathbf{D}(\Mod #1)}
\newcommand{\dsing}{\mathbf{D}_{\mathrm{sg}}}
\newcommand{\DHom}[1]{\operatorname{Hom}_{\mathbf D(#1)}}
\newcommand{\End}{\operatorname{End}}

\newcommand{\Ext}{\operatorname{Ext}}
\newcommand{\tExt}{\operatorname{\rlap{$\smash{\widehat{\mathrm{Ext}}}$}\phantom{\mathrm{Ext}}}}
\newcommand{\env}[1]{{{#1}^{\mathrm{ev}}}}

\newcommand{\Fin}{\operatorname{Fin}}
\newcommand{\ges}{{\scriptscriptstyle\geqslant}}
\newcommand{\gp}{\operatorname{GP}}
\newcommand{\gi}{\operatorname{GI}}
\newcommand{\Gproj}{\operatorname{Gproj}}
\newcommand{\uGproj}{\underline{\Gproj}}
\newcommand{\GProj}{\operatorname{GProj}}
\newcommand{\uGProj}{\underline{\operatorname{GProj}}}
\newcommand{\GInj}{\operatorname{GInj}}

\newcommand{\hh}[1]{H^{*}(#1)}
\newcommand{\Hom}{\operatorname{Hom}}
\newcommand{\KHom}[1][A]{\operatorname{Hom_{\mathbf K(#1)}}}
\newcommand{\id}{\operatorname{id}}

\newcommand{\Inj}{\operatorname{Inj}}
\newcommand{\ires}{\mathbf{i}}
\newcommand{\ipres}{\mathbf{j}}

\newcommand{\iso}{\xrightarrow{\raisebox{-.4ex}[0ex][0ex]{$\scriptstyle{\sim}$}}}

\newcommand{\longiso}{\xrightarrow{\ \raisebox{-.4ex}[0ex][0ex]{$\scriptstyle{\sim}$}\ }}
\newcommand{\longliso}{\xleftarrow{\ \raisebox{-.4ex}[0ex][0ex]{$\scriptstyle{\sim}$}\ }}

\newcommand{\KacInj}[1]{\mathbf{K}_{\mathrm{ac}}(\Inj #1)}
\newcommand{\kan}{\mathbf{N}}
\newcommand{\Ker}{\operatorname{Ker}}
\newcommand{\KFlat}[1]{\mathbf{K}(\mathrm{Flat} #1)}
\newcommand{\KInj}[1]{\mathbf{K}(\Inj #1)}
\newcommand{\KInjc}[1]{\mathbf{K}^{\mathrm c}(\Inj #1)}
\newcommand{\kinj}[1]{\mathbf{K}_{\mathrm{inj}}(#1)}
\newcommand{\KMod}[1]{\mathbf {K}(\Mod #1)}

\newcommand{\pres}{\mathbf{p}}
\newcommand{\Prj}{\operatorname{Proj}}

\newcommand{\KacProj}[1]{\mathbf{K}_{\mathrm{ac}}(\Prj #1)}
\newcommand{\KProj}[1]{\mathbf{K}(\Prj #1)}
\newcommand{\KProjc}[1]{\mathbf{K}^{\mathrm c}(\Prj #1)}
\newcommand{\kproj}[1]{\mathbf{K}_{\mathrm{proj}}(#1)}

\newcommand{\Loc}{\operatorname{Loc}}
\renewcommand{\mod}{\operatorname{mod}}
\newcommand{\Mod}{\operatorname{Mod}}
\newcommand{\pdim}{\operatorname{proj\,dim}}

\newcommand{\bbN}{\mathbb N} 
\newcommand{\rank}{\operatorname{rank}}
\newcommand{\RHom}{\operatorname{RHom}}
\newcommand{\sHom}{\underline{\Hom}}
\newcommand{\Spec}{\operatorname{Spec}}
\newcommand{\supp}{\operatorname{supp}}

\newcommand{\Thick}{\operatorname{Thick}}
\newcommand{\Tor}{\operatorname{Tor}}


\newcommand{\lotimes}{\otimes^{\mathrm L}}
\newcommand{\op}[1]{{#1}^{\mathrm{op}}}
\newcommand{\lra}{\longrightarrow}
\newcommand{\wh}{\widehat}

\newcommand{\xra}{\xrightarrow}
\newcommand*{\intref}[2]{\def\tmp{#1}\ifx\tmp\empty\hyperref[#2]{\ref*{#2}}\else\hyperref[#2]{#1~\ref*{#2}}\fi}

\newcommand{\mcE}{\mathcal{E}}

\newcommand{\bfa}{\mathbf{a}}
\newcommand{\bff}{\mathbf{f}}
\newcommand{\bfh}{\mathbf{h}}
\newcommand{\bfj}{\mathbf{j}}
\newcommand{\bfg}{\mathbf{g}}
\newcommand{\bfq}{\mathbf{q}}
\newcommand{\bfr}{\mathbf{r}}
\newcommand{\bfs}{\mathbf{s}}
\newcommand{\bft}{\mathbf{t}}

\newcommand{\bfK}{\mathbf K}

\newcommand{\fm}{\mathfrak{m}} 
\newcommand{\fp}{\mathfrak{p}}
\newcommand{\fq}{\mathfrak{q}} 
\newcommand{\eps}{\varepsilon}
\newcommand{\gam}{\varGamma}


\title[Gorenstein algebras]{The Nakayama functor and its completion for \\ Gorenstein algebras}

\author[Iyengar and Krause]{Srikanth  B. Iyengar and Henning Krause}

\address{Srikanth B. Iyengar\\ 
Department of Mathematics\\
University of Utah\\ 
Salt Lake City, UT 84112\\ 
U.S.A.}

\address{Henning Krause\\ 
Fakult\"at f\"ur Mathematik\\ 
Universit\"at Bielefeld\\ 
33501 Bielefeld\\ 
Germany.}

\begin{document}
\dedicatory{To Bill Crawley-Boevey on his 60th birthday.}

\begin{abstract}
  Duality properties are studied for a Gorenstein algebra that is finite and projective over its center.  Using the homotopy category of injective modules, it is proved that there is a local duality theorem for the subcategory of acyclic complexes of such an algebra, akin to the local duality theorems of Grothendieck and Serre in the context of commutative algebra and algebraic geometry. A key ingredient is the Nakayama functor on the bounded derived category of a Gorenstein algebra, and its extension to the full homotopy category of injective modules.\medskip

\noindent Key words: Gorenstein algebra, Gorenstein-projective module, local
  duality, Nakayma functor, Serre duality, stable module category.
\end{abstract}

\subjclass[2020]{16G30 (primary); 13C60, 16E65, 13D45, 18G65 (secondary)}

\date{\today}

\thanks{SBI was partly supported by NSF grants DMS-1700985 and DMS-2001368.}

\maketitle

\setcounter{tocdepth}{1}
\tableofcontents

\section{Introduction}
This work is a contribution to the representation theory of Gorenstein algebras, both commutative and non-commutative, with a focus on duality phenomena. The notion of a Gorenstein variety was introduced by Grothendieck~\cite{Grothendieck:1957, Grothendieck:1958, Hartshorne:1966, Hartshorne:1967}, and grew out of his reinterpretation and extension of Serre duality~\cite{Serre:1956} for projective varieties. A local version of his duality is that over a Cohen-Macaulay  local algebra $R$ of dimension $d$, with maximal ideal $\fm$, and for complexes $F,G$ with $F$ perfect, there are natural isomorphisms
\[
\Hom_R(\Ext_R^i(F, G), I(\fm)) \cong \Ext_R^{d-i}(G, R\gam_{\fm}(\omega_R \lotimes_R F))
\]
where  $\omega_R$ is a dualising module, and $I(\fm)$  is the injective envelope of $R/\fm$. The functor $R\gam_{\fm}$ represents local cohomology at $\fm$. Serre duality concerns the case where $R$ is the local ring at the vertex of the affine cone of a projective variety.  The ring $R$ (equivalently, the variety it represents) is  said to be Gorenstein if, in addition, the $R$-module $\omega_R$ is projective. Serre observed that this property is characterised by $R$ having finite self-injective dimension. This result appears in the work of Bass~\cite{Bass:1963} who  gave numerous other characterisations of  Gorenstein rings.  

Iwanaga~\cite{Iwanaga:1978} launched the study of noetherian rings, not necessarily commutative, having finite self-injective dimension on both sides. Now known as Iwanaga-Gorenstein rings, these form an integral part of the representation theory of algebras.  In that domain, the principal objects of interest are maximal Cohen-Macaulay modules and the associated stable category. Auslander~\cite{Auslander:1978a} and Buchweitz~\cite{Buchweitz:1986} have proved duality theorems for the stable category of a Gorenstein algebra with \emph{isolated} singularities.  The driving force behind our work was to understand what duality phenomena can be observed for general Gorenstein algebras.  \intref{Theorem}{ithm:Gor} below is what we found, following Grothendieck's footsteps.

We set the stage to present that result and begin with a crucial definition.

\begin{definition}
  Let $R$ be a commutative noetherian ring. An $R$-algebra $A$ is
  called \emph{Gorenstein} if
  \begin{enumerate}
  \item    the $R$-module $A$ is finitely generated and projective, and
    \item for each $\fp$ in $\Spec R$ with $A_\fp\neq 0$ 
  the ring $A_\fp$ has finite injective dimension as a module over
  itself, on the left and on the right.
\end{enumerate}  
\end{definition}

A Gorenstein $R$-algebra $A$ itself need not be Iwanaga-Gorenstein.
Indeed, for $A$ commutative and Gorenstein, the injective dimension of
$A$ is finite precisely when its Krull dimension is finite, and there
exist rings locally of finite injective dimension but of infinite
Krull dimension. There are precedents to the study of Gorenstein
algebras, starting with~\cite{Bass:1963} and more
recently in the work of Goto and Nishida~\cite{Goto/Nishida:2002}. Our
work differs from theirs in its focus on duality. We refer to
\cite{Gnedin/Iyengar/Krause:2022} for a discussion of examples and natural
constructions  preserving the Gorenstein property.

Let $A$ be a Gorenstein $R$-algebra and
$\omega_{A/R}\colonequals \Hom_R(A,R)$ the dualising
\emph{bimodule}. Unlike in the commutative case, $\omega_{A/R}$ need
not be projective (either on the left or on the right), and the
bimodule structure can be complicated. Nevertheless, it is a tilting
object in $\dcat A$, the derived category of $A$-modules, inducing a
triangle equivalence
\[
\RHom_A(\omega_{A/R},-)\colon \dcat A\longiso\dcat A\,;
\]
see \intref{Section}{se:Gorenstein-dc}. The representation theory of a
Gorenstein algebra $A$ is governed by its maximal Cohen-Macaulay
modules, namely, finitely generated $A$-modules $M$ with
$\Ext_A^i(M,A)=0$ for $i\ge 1$. For our purposes their infinitely
generated counterparts are also important. Thus we consider Gorenstein
projective $A$-modules (abbreviated to G-projective), which are by
definition $A$-modules occuring as syzygies in acyclic complexes of
projective $A$-modules \cite{Buchweitz:1986,Enochs/Jenda:1995}. The
G-projective modules form a Frobenius exact category, and so the
corresponding stable category is triangulated. Its inclusion into the
usual stable module category has a right adjoint, the Gorenstein
approximation functor, $\gp(-)$. The functor
\[
S \colonequals \gp(\omega_{A/R}\otimes_A-)\colon \uGProj A \lra \uGProj A
\]
is an equivalence of triangulated categories, and plays the role of a Serre functor on the subcategory of finitely generated G-projectives. This is spelled out in the result below. Here the $\tExt^i_A(-,-)$ are the Tate cohomology modules, which compute morphisms in $\uGProj A$.

\begin{theorem}
\label{ithm:Gor}
Let $A$ be a Gorenstein $R$-algebra, and let $M,N$ be G-projective $A$-modules with $M$ finitely generated.  For each $\fp\in\Spec R$ there is a natural isomorphism
\[
\Hom_R(\tExt_A^i(M,N),I(\fp))\cong \tExt_A^{d(\fp)-i}(N, \gam_\fp S(M)), 
\]
where $d(\fp)= \dim (R_\fp) - 1$.
\end{theorem}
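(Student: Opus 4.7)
The plan is to combine two dualities: a Nakayama-type relative Serre duality for $\uGProj A$ over the base ring $R$, followed by Grothendieck local duality on $R_\fp$ to convert it into a statement involving $I(\fp)$ and $\gam_\fp$.

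Since $A$ is projective over $R$, Tate Ext commutes with localization at $\fp$, so throughout I would replace $R$, $A$, $M$, $N$ by their localizations and work over the local ring $R_\fp$. The first step is then to establish a relative Serre duality: for $M$ finitely generated G-projective and $N$ G-projective, a natural isomorphism of the form
\[
\RHom_{R_\fp}\bigl(\tExt_{A_\fp}^i(M, N),\, \omega_{R_\fp}\bigr) \simeq \tExt_{A_\fp}^{-i}(N,\, SM),
\]
where $\omega_{R_\fp}$ is a normalized dualizing complex for $R_\fp$ (whose existence is forced by the Gorenstein hypothesis on $A$). My starting point would be the identification $\omega_{A/R} \otimes_A P \cong \Hom_R(\Hom_A(P, A), R)$ valid for $P$ finitely generated projective, which yields via tensor--Hom adjunction the classical duality $\Hom_A(M, \omega_{A/R} \otimes_A N) \cong \Hom_R(\Hom_A(N, M), R)$ when $M$ is finitely generated projective. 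Upgrading this to arbitrary G-projectives in the stable category will require replacing projective resolutions by complete (Tate) resolutions so as to get $\tExt$ in place of $\Ext$, and composing with the Gorenstein approximation $\gp(-)$ that is built into $S = \gp(\omega_{A/R} \otimes_A -)$; the tilting property of $\omega_{A/R}$ already recorded in the excerpt is what keeps these derived manipulations under control.

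The second step is Grothendieck local duality over $R_\fp$, which provides
\[
\Hom_R\bigl(\RHom_{R_\fp}(X,\, \omega_{R_\fp}),\, I(\fp)\bigr) \cong \rgam_\fp X,
\]
or equivalently, up to normalization, $\Hom_R(\Ext_{R_\fp}^i(X, \omega_{R_\fp}), I(\fp)) \cong H^{-i}_\fp(X)$. I would apply $\Hom_R(-, I(\fp))$ to the isomorphism of the first step and use that, for a finitely generated G-projective $SM$, the derived local cohomology $\rgam_\fp(SM)$ is concentrated in a single degree related to $\dim R_\fp$ where it coincides, up to a shift, with the ordinary $\fp$-torsion functor $\gam_\fp$. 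This should yield the claimed formula, with the shift by $-1$ in $d(\fp) = \dim R_\fp - 1$ absorbing one suspension coming from the passage to the stable category / Verdier quotient $\uGProj A$.

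The hard part will be the relative Serre duality. Extending the classical Nakayama duality from finitely generated projectives to arbitrary G-projectives, compatibly with Tate cohomology, seems to require working in a larger ambient triangulated category---presumably $\KInj A$, as signaled by the title and abstract---and extending the Nakayama functor to that setting so that it interacts correctly with $\gp(-)$ and with the complete resolutions computing $\tExt$. This completion of the Nakayama functor is presumably the technical engine developed in the remainder of the paper, and once it is in hand the local duality formula should follow by the two-step schema outlined above.
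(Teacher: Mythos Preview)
Your instinct that commutative local duality for $R$ must enter is right, and so is your identification of the completed Nakayama functor on $\KInj A$ as the engine. But the two-step decomposition you propose---a relative Serre duality over $R$, followed by Grothendieck local duality---has a gap that is not merely technical, and it is not the route the paper takes.

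The gap is twofold. First, in Step~1 you posit $\RHom_{R_\fp}(\tExt^i_A(M,N),\omega_{R_\fp})\simeq \tExt^{-i}_A(N,SM)$; but $N$ is an arbitrary (large) G-projective, so $\tExt^i_A(M,N)$ need not be finitely generated over $R$, and $\RHom_R(-,R)$ has no good behaviour on such modules. Second, and more seriously, composing your two steps as written produces
\[
\Hom_R\bigl(\tExt^{-i}_A(N,SM),\,I(\fp)\bigr)\ \cong\ \rgam_\fp\bigl(\tExt^i_A(M,N)\bigr),
\]
which is the wrong statement: the Matlis dual lands on the wrong side, and $\rgam_\fp$ is applied to the $R$-module $\tExt^i_A(M,N)$ rather than to the object $SM$ inside $\uGProj A$. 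The $\gam_\fp$ in the theorem is the Benson--Iyengar--Krause local cohomology functor on the compactly generated triangulated category $\uGProj A\simeq\KacInj A$, not $R$-module local cohomology of a Tate Ext group; your claim that ``$\rgam_\fp(SM)$ is concentrated in a single degree'' conflates these two and does not bridge them.

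The paper avoids dualising against $R$ altogether. Using Brown representability it constructs, for each compact $C$ in $\KacInj A$, a \emph{Matlis lift} $T_\fp(C)$ representing the functor $\Hom_R(\Hom(C,-),I(\fp))$, and gives an explicit formula for it by passing through the ambient $\KInj A$. The Gorenstein property of $R$ enters once, in the elementary form $\gam_\fp(\ires R)\cong \Sigma^{-\dim R_\fp}I(\fp)$; substituting this into the formula for $T_\fp$ and tracking the recollement functors yields $\gam_\fp(\wh\kan^{-1}C)\cong \Sigma^{d(\fp)}T_\fp(C)$ directly in $\KacInj A$, so that $\gam_\fp$ is already the categorical one. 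The extra shift by $1$ you anticipated does appear, but it comes from a rotation of the triangle relating $\bfr$ and $\bfs$, not from local duality.
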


This is the duality theorem we seek; it is proved in
\intref{Section}{se:GD}. It is new even for commutative rings. The
parallel to Grothendieck's duality theorem is clear.

In the following
we explain the strategy for proving this theorem and some essential
ingredients. The functor $\gam_\fp$ is analogous to the local
cohomology functor encountered above. It is constructed in
\intref{Section}{se:bik} following the recipe in
\cite{Benson/Iyengar/Krause:2008a}, using the natural $R$-action on
$\uGProj A$. Even if $N$ is finitely generated, $\gam_\fp(N)$ need not
be, which is one reason we have to work with infinitely generated
modules in the first place. If $R$ is local with maximal ideal $\fp$
and $A$ has isolated singularities, $\gam_{\fp}$ is the identity and
the duality statement above is precisely the one discovered by
Auslander and Buchweitz.

For a Gorenstein algebra, the stable category of G-projective modules is equivalent to $\KacInj A$, the homotopy category of acyclic complexes of injective $A$-modules. This connection is explained in  \intref{Section}{se:GP},  and builds on the results from \cite{Jorgensen:2005a, Krause:2005}. In fact, much of the work that goes into proving \intref{Theorem}{ithm:Gor} deals with $\KInj A$, the full homotopy category of injective $A$-modules; see \intref{Section}{se:hc}. A key ingredient in all this is the Nakayama functor on the category of $A$-modules:
\[
\kan \colon \Mod A\lra \Mod A\quad\text{where}\quad \kan(M) = \Hom_A(\omega_{A/R},M)
\]
 As noted above, its derived functor induces an equivalence on  $\dcat A$. Following \cite{Krause:2005} we extend the Nakayama functor to all of $\KInj A$, 
which one may think of as a triangulated analogue of the ind-completion of $\dbcat A$.  This \emph{completion} of the Nakayama functor is also an equivalence:
\[
\wh\kan_{A/R}\colon \KInj A\longiso \KInj A\,.
\]
This is proved in \intref{Section}{se:Gorenstein-hc}, where we establish also that it restricts to an equivalence on $\KacInj A$. The induced equivalence on the stable category of G-projective modules is precisely the functor $S$ in the statement of \intref{Theorem}{ithm:Gor}; see \intref{Section}{se:GP} where the singularity category of $A$, in the sense of Buchweitz~\cite{Buchweitz:1986} and Orlov~\cite{Orlov:2004a} also appears. To make this identification, we need to extend results of Auslander and Buchweitz concerning G-approximations; this is dealt with in \intref{Appendix}{se:appendix}. 

Our debt to Grothendieck is evident.  It ought to be clear by now that the work of Auslander and Buchweitz also provides much inspiration for this paper. Whatever new insight we bring is through the systematic use of the homotopy category of injective modules and methods from abstract homotopy theory, especially the Brown representability theorem. To that end we need the structure theory of injectives over finite $R$-algebras from Gabriel thesis~\cite{Gabriel:1962}. Gabriel also introduced the Nakayama functor in representation theory of Artin algebra in his exposition of Auslander-Reiten duality; it is the categorical analogue of the Nakayama automorphism that permutes the isomorphism classes of simple modules over a self-injective algebra \cite{Gabriel:1980}. And it was Gabriel who pointed out the parallel between derived equivalences induced by tilting modules and the duality of Grothendieck and Roos \cite{Keller-Vossieck:1988}.

\subsubsection*{Acknowledgements}

It is a pleasure to thank Xiao-Wu Chen, Vincent G\'elinas, and Janina Letz for various
helpful comments on this work. In addition we are grateful to an
anonymous referee for suggestions concerning the exposition.

\section{Homotopy category of injectives}
\label{se:hc}
In this section we describe certain functors on homotopy categories
attached to noetherian rings.  Our basic references for this material
are \cite{Iyengar/Krause:2006, Krause:2005}.

Throughout $A$ will be ring that is noetherian on both sides; that is
to say, $A$ is noetherian as a left  and as a right $A$-module.  In
what follows $A$-modules will mean left $A$-modules, and
$\op A$-modules are  identified with right $A$-modules. We
write $\Mod A$ for the (abelian) category of $A$-modules and $\mod A$ for its full subcategory consisting of finitely generated modules. Also, $\Inj A$
and $\Prj A$ are the full subcategories of $\Mod A$ consisting of injective and
projective modules, respectively.

For any additive category $\cat A\subseteq\Mod A$, like the ones in the last paragraph, $\bfK(\cat A)$ will denote the associated homotopy category, with its natural structure as a triangulated category. Morphisms in this category are denoted $\KHom[\cat A](-,-)$.  An object $X$ in $\bfK(\cat A)$ is \emph{acyclic} if $\hh X=0$, and the full subcategory of acyclic objects in $\bfK(\cat A)$ is denoted $\bfK_{\mathrm{ac}}(\cat A)$. A complex $X\in\bfK(\cat A)$ is said to be \emph{bounded above} if $X^i=0$ for $i\gg 0$, and \emph{bounded below} if $X^i=0$ for $i\ll 0$.
 
In the sequel our focus in mostly on $\KInj A$, the homotopy category
of injective modules, and its various subcategories; the analogous
categories of projectives play a more subsidiary role.  From work in
\cite{Jorgensen:2005a,Krause:2005,Neeman:2008} we know that the
triangulated categories $\KInj A$ and $\KProj A$ are compactly
generated since the ring $A$ is noetherian on both sides; the compact
objects in these categories are described further below.  Let
$\dcat A$ denote the (full) derived category of $A$-modules and
$\bfq\colon \KMod A\to \dcat A$ the localisation functor; its kernel
is $\bfK_{\mathrm{ac}}(\Mod A)$.  We write $\bfq$ also for its
restriction to the homotopy categories of injectives and
projectives. These functors have adjoints:
\[
\begin{aligned}
\begin{tikzcd}
\KInj A   \arrow[twoheadrightarrow,yshift=.75ex]{r}{\bfq}
  &   \arrow[tail,yshift=-.75ex]{l}{\ires}
 \dcat A
\end{tikzcd}
\end{aligned}
\qquad\text{and}\qquad
\begin{aligned}
\begin{tikzcd}
\KProj A    
  \arrow[twoheadrightarrow,yshift=-.75ex]{r}[swap]{\bfq}
  &   \arrow[tail,yshift=.75ex]{l}[swap]{\pres} \dcat A\,.
\end{tikzcd}
\end{aligned}
\]
Our convention is to write the left adjoint above the corresponding
right one. In what follows it is convenient to conflate $\ires$ and
$\pres$ with $\ires\circ \bfq$ and $\pres\circ\bfq$,
respectively. The images of $\ires$ and $\pres$ are the K-injectives
and K-projectives, respectively. Recall that an object $X$ in
$\KInj A$ is \emph{K-injective} if $\KHom(W,X)=0$ for any acyclic
complex $W$ in
$\bfK(\Mod A)$. We write $\kinj A$ for the full subcategory of $\KInj A$
consisting of K-injective complexes. The subcategory
$\kproj A\subseteq \KProj A$ of K-projective complexes is defined
similarly.

\subsection*{Compact objects}
Since $A$ is noetherian $\Inj A$ is closed under arbitrary direct
sums, and hence so is the subcategory $\KInj A$ of $\KMod A$. As in
any triangulated category with arbitrary direct sums, an object $X$ in
$\KInj A$ is \emph{compact} if $\KHom(X,-)$ commutes with direct
sums. The compact objects in $\KInj A$ form a thick subcategory,
denoted $\KInjc A$. The adjoint pair $(\bfq,\ires)$ above restricts to
an equivalence of triangulated categories
\[
\begin{tikzcd}
\KInjc A   \arrow[rightarrow,yshift=1ex]{r}{\bfq}
 &  \dbcat A\,, \arrow[rightarrow,yshift=-1ex]{l}{\ires}[swap]{\sim}
\end{tikzcd}
\]
where $\dbcat A$ denotes the bounded derived category of $\mod A$; see
\cite[Proposition~2.3]{Krause:2005} for a proof of this assertion. The
corresponding identification of the compact objects in
$\KProj A$ is a bit more involved, and is due to
J{\o}rgensen~\cite[Theorem~3.2]{Jorgensen:2005a}. The assignment
$M\mapsto \Hom_{\op A}(\pres M,A)$ induces an equivalence
\[
\op{\dbcat{\op A}} \longiso \KProjc A.
\]
See also \cite{Iyengar/Krause:2006}, where these two equivalences are related. The formula below for computing morphisms from compacts in $\KInj A$ is useful in the sequel.

\begin{lemma}
\label{lem:an-iso}
For $C,X\in \KInj A$ with $C$ compact, there is a natural isomorphism
\[
\KHom (C,X) \cong H^{0}(\Hom_{A}(\pres C,A) \otimes_{A}X)\,.
\]
\end{lemma}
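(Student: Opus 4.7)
The plan is to construct the natural transformation underlying the assertion, then reduce to a base case via dévissage.

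The component-wise iso $\Hom_A(P,A)\otimes_A N\cong\Hom_A(P,N)$ for a finitely generated projective module $P$ assembles into a natural map of complexes
\[
\mu\colon\Hom_A(\pres C,A)\otimes_A X\lra\Hom_A(\pres C,X),
\]
which at each cochain degree is the inclusion of a direct sum into a product. Precomposition with the K-projective resolution $\pres C\to C$ gives a natural map $\KHom(C,X)\to\KHom(\pres C,X)=H^0(\Hom_A(\pres C,X))$; composing with the inverse of $H^0(\mu)$, once that is known to be an iso, provides the natural transformation to $H^0(\Hom_A(\pres C,A)\otimes_A X)$. Both sides of the asserted iso are then cohomological and additive in $C$, and the equivalence $\bfq\colon\KInjc A\iso\dbcat A$ exhibits $\ires A$ as a thick generator of $\KInjc A$ (since $A$ thickly generates $\dbcat A$). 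So it suffices to verify the iso for $C=\ires A$.

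For $C=\ires A$ one takes $\pres C=A$, which is permitted since $A$ is K-projective; then $\Hom_A(A,A)\otimes_A X=X$, so the right-hand side collapses to $H^0(X)$ and $\mu$ is trivially an iso. The assertion becomes $\KHom(\ires A,X)\cong H^0(X)$, naturally in $X\in\KInj A$. I verify this by direct inductive extension: the augmentation $A\to\ires A$ induces $\KHom(\ires A,X)\to\KHom(A,X)=H^0(X)$. For surjectivity, given a cocycle $z\in Z^0(X)$, send $1\mapsto z$ to define $A\to X^0$, extend to $(\ires A)^0\to X^0$ by injectivity of $X^0$, and continue upward along $A\hookrightarrow(\ires A)^0\to(\ires A)^1\to\cdots$ using the injectivity of each $X^j$. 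For injectivity, a chain map whose restriction to $A$ is a boundary is shown null-homotopic by the analogous inductive construction of homotopy operators. As a by-product, $\KHom(\ires A,W)=H^0(W)=0$ for $W\in\KacInj A$, and the dévissage extends this vanishing to every $C\in\KInjc A$.

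The main obstacle is establishing that $\mu$ is a quasi-isomorphism in general. When $\pres C$ is unbounded (as happens whenever $A$ has infinite global dimension), $\mu$ is only a proper inclusion at the chain level. The plan is to exploit that $\pres C$ represents a bounded object of $\dcat A$: approximate $\pres C$ by its finite subcomplexes (brutal truncations that are bounded complexes of finitely generated projectives, on which $\mu$ is a termwise iso), and pass to a limit, using the injectivity of each $X^j$ and the vanishing of $\KHom(-,W)$ on acyclic injective $W$ established above to transfer the quasi-iso property to $\pres C$ itself.
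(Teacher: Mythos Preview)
The d\'evissage step contains a genuine error: your claim that $A$ thickly generates $\dbcat A$, equivalently that $\ires A$ thickly generates $\KInjc A$, is false unless $A$ is regular. In $\dbcat A$ the thick subcategory generated by $A$ is the category of perfect complexes, whereas compact objects of $\KInj A$ correspond to arbitrary objects of $\dbcat A$. Your argument therefore establishes the isomorphism only for those $C$ with $\pres C$ homotopy equivalent to a bounded complex, and it is precisely the remaining case that carries the content of the lemma. Note also that without first knowing $H^0(\mu)$ is an isomorphism for every compact $C$, your proposed natural transformation is only a zigzag, so the d\'evissage cannot even begin; one would have to run separate d\'evissages on $\mu$ and on the comparison map $\KHom(C,X)\to\KHom(\pres C,X)$, but the generation problem blocks both.

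Your fallback plan, approximating $\pres C$ by bounded brutal truncations, faces a limit problem you do not address: the truncations form a direct system with colimit $\pres C$, but after applying $\Hom_A(-,A)$ one obtains an inverse system, and tensor product with $X$ does not commute with inverse limits. The paper sidesteps all of this by truncating $X$ rather than $\pres C$. Each $X(n)=X^{\ges -n}$ is bounded below, hence K-injective, so the quasi-isomorphism $\pres C\to C$ induces $\KHom(C,X(n))\cong\KHom(\pres C,X(n))$; and against a bounded-below $X(n)$ the map $\mu$ is a termwise isomorphism because $\pres C$ is bounded above with finitely generated projective terms. One then writes $X=\hocolim_n X(n)$ and invokes the compactness of $C$, the one hypothesis you never used, to commute $\KHom(C,-)$ past the homotopy colimit; on the other side $H^0$ and the tensor product commute with the filtered colimit directly.
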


\begin{proof}
Since $C$ is compact its K-projective resolution $\pres C$ is homotopy equivalent to a complex that is bounded above and consists of finitely generated projective $A$-modules. For each integer $n$, let $X(n)$ be the subcomplex $X^{\ges -n}$ of $X$. Since $X(n)$ is K-injective, the quasi-isomorphism $\pres C\to C$ induces the one on the left
\[
\Hom_{A}(C,X(n)) \longiso  \Hom_{A}(\pres C,X(n)) \longliso \Hom_{A}(\pres C,A)\otimes_{A}X(n)
\]
The one on the right is the standard one, and holds because of the aforementioned properties of $\pres C$ and the fact that $X(n)$ is bounded below. One thus gets a canonical isomorphism
\[
\KHom(C,X(n)) \longiso H^{0}(\Hom_{A}(\pres C,A)\otimes_{A}X(n))\,.
\]
It is compatible with the inclusions $X(n)\subseteq X(n+1)$, so
induces the isomorphism in the bottom row of the following diagram.
\[
\begin{tikzcd}
\KHom(C,\hocolim_{n\ges 0}X(n)) \arrow{d}{\wr} \arrow{r}{\sim} 
	&H^{0}(\Hom_{A}(\pres C,A)\otimes_{A} \hocolim_{n\ges 0}X(n))   \arrow{d}{\wr} \\
\colim_{n\ges 0} \KHom(C,X(n)) \arrow{r}{\sim} 
	& \colim_{n\ges 0} H^{0}(\Hom_{A}(\pres C,A)\otimes_{A}X(n))\,.
\end{tikzcd}
\]
The isomorphism on the left holds by the compactness of $C$, while the one on the right holds because $H^{0}(-)$ commutes with homotopy colimits. It remains to note that $\hocolim_{n\ges 0}X(n) = X$ in $\KInj A$. 
\end{proof}

\subsection*{A recollement}
The functors $\KacInj A\xra{\mathrm{incl}} \KInj A\xra{\bfq}\dcat A$
induce a recollement of triangulated categories
\begin{equation}
\label{eq:recollement}
\begin{tikzcd}
\KacInj A   \arrow[hookrightarrow]{rr}[description]{\mathrm{incl}} && \KInj A
  \arrow[twoheadrightarrow,yshift=-1.5ex]{ll}{\bfr}
  \arrow[twoheadrightarrow,yshift=1.5ex]{ll}[swap]{\bfs}
  \arrow[twoheadrightarrow]{rr}[description]{\bfq} &&\dcat A
  \arrow[tail,yshift=-1.5ex]{ll}{\ires}
  \arrow[tail,yshift=1.5ex]{ll}[swap]{\ipres}
\end{tikzcd}
\end{equation}
The functor $\ires$ is the one discussed above; it embeds $\dcat A$ as the homotopy category of K-injective complexes. The functor $\bfr$ thus has a simple description: there is an exact triangle
\begin{equation}
\label{eq:rho}
\bfr X \lra X \lra \ires X\lra 
\end{equation}
where the morphism $X\to \ires X$ is the canonical one. Indeed,
$\bfr X$ is evidently acyclic and if $W$ is in $\KacInj A$ the
induced map $\KHom (W,\bfr X)\to \KHom(W,X)$ is an isomorphism, for
one has $\KHom (W,\ires X)=0$.

The functor $\ipres\colon \dcat A\to \KInj A$ is fully faithful. The image of $\ipres$ equals the kernel of $\bfs$ and identifies with $\Loc(\ires A)$, the localising subcategory of $\KInj A$ generated by the injective resolution of $A$; see \cite[Theorem~4.2]{Krause:2005}. One may think of $\ipres$ as the
injective version of taking projective resolutions; see \intref{Lemma}{lem:lambda2}. To justify this claim takes  preparation. 

\begin{lemma}
\label{lem:rs}
Restricted to the subcategory $\Loc(\ires A)$ of $\KInj A$ there is a natural isomorphism of functors $\bfr \iso \Sigma^{-1} \bfs\ires$.
\end{lemma}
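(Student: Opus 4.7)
The plan is to deduce the isomorphism directly from the recollement triangle \eqref{eq:rho} by applying the left adjoint $\bfs$ and exploiting two facts: first, the hypothesis $X\in\Loc(\ires A)$, which by the identification $\Loc(\ires A)=\ker\bfs$ recorded just above means $\bfs X=0$; second, that $\bfr X$ is acyclic, so $\bfs$ acts on it as the identity (since the inclusion $\KacInj A\hookrightarrow\KInj A$ is fully faithful, the counit $\bfs\,\mathrm{incl}\to\id$ is invertible).

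More precisely, I would start with the triangle \eqref{eq:rho}
\[
\bfr X\lra X\lra \ires X\lra \Sigma\bfr X
\]
rotate it to the triangle $\ires X\to \Sigma\bfr X\to \Sigma X\to \Sigma\ires X$, and then apply $\bfs$. By the vanishing $\bfs X=\bfs\Sigma X=0$ noted above, the induced triangle collapses to an isomorphism
\[
\bfs\ires X\longiso \bfs(\Sigma\bfr X)=\Sigma\,\bfs\bfr X\,.
\]
Since $\bfr X\in \KacInj A$, the right hand side equals $\Sigma\bfr X$, so one obtains $\bfs\ires X\cong \Sigma\bfr X$, that is $\bfr X\cong \Sigma^{-1}\bfs\ires X$. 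Naturality is automatic: every identification invoked is the unit or counit of an adjunction in the recollement, applied to the natural triangle \eqref{eq:rho}.

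There is essentially no serious obstacle here; the only thing to be careful about is bookkeeping. One should verify that the description $\Loc(\ires A)=\ker\bfs$ from \cite[Theorem~4.2]{Krause:2003a}, together with the formal properties of a recollement (the compositions $\bfs\circ \mathrm{incl}\cong\id$ and $\bfq\circ\mathrm{incl}=0$), really do suffice to run the above two lines of triangle chase, and that the shift $\Sigma$ appears on the correct side after rotating \eqref{eq:rho}. Once those are in hand, the assertion is a one-step diagram manipulation and the resulting isomorphism is natural in $X\in\Loc(\ires A)$.
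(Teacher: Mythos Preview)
Your proposal is correct and follows essentially the same approach as the paper: apply $\bfs$ to the localisation triangle \eqref{eq:rho}, use that $\bfs X=0$ because $X\in\Loc(\ires A)=\ker\bfs$, and identify $\bfs\bfr X\cong\bfr X$ since $\bfr X$ is acyclic. The paper compresses this into two sentences without the explicit rotation, but the argument is the same.
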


\begin{proof}
Consider anew the exact triangle \eqref{eq:rho}, but for $X$ in $\Loc(\ires A)$:
\[
\bfr X \lra X \lra \ires X\lra \Sigma\bfr X\,.
\]
Apply $\bfs$ and remember that its kernel is $\Loc(\ires A)$.
\end{proof}

\subsection*{Projective algebras}
In the remainder of this section we assume that the ring $A$ (which
hitherto has been noetherian on both sides) is also projective, as a
module, over some central subring $R$. For the moment the only role
$R$ plays is to allow for constructions of bimodule resolutions with
good properties. Set $\env A\colonequals A\otimes_R\op A$, the
enveloping algebra of the $R$-algebra $A$, and set
\[
E\colonequals \ires_{\env A}A\,.
\]
This is an injective resolution of $A$ as a (left) module over $\env A$. Since $E$ is a complex of $A$-bimodules,  for any complex $X$ of $A$-modules, the right action of $A$ on $E$ induces a left $A$-action on $\Hom_A(E,X)$. The structure map $A\to E$ of bimodules induces a morphism of $A$-complexes
\begin{equation}
\label{eq:AE}
\Hom_A(E,X)\lra \Hom_A(A,X)\cong X \quad\text{for  $X\in \KMod A$.}
\end{equation}

The computation below will be used often:

\begin{lemma}
\label{lem:AE}
The morphism in \eqref{eq:AE} is a quasi-isomorphism for  $X\in \KInj A$.
\end{lemma}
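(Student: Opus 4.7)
The plan is to first prove the claim when $X$ is bounded below — where K-injectivity makes it immediate — and then bootstrap to arbitrary $X\in\KInj A$ via an increasing filtration.

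For the bounded-below case: since $\op A$ is $R$-projective, the ring $\env A = A\otimes_R \op A$ is projective, hence flat, as a right $A$-module via $A\hookrightarrow\env A$, $a\mapsto a\otimes 1$. Restriction along this map therefore sends injective $\env A$-modules to injective $A$-modules, so $E$ viewed over $A$ is a bounded-below complex of injectives quasi-isomorphic to $A$; in particular $E$ is K-injective over $A$. Any bounded-below $X\in\KInj A$ is also K-injective, so $\Hom_A(-,X)$ carries the quasi-isomorphism $A\to E$ to a quasi-isomorphism $\Hom_A(E,X)\to \Hom_A(A,X)\cong X$.

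For arbitrary $X\in \KInj A$, set $X(n)\colonequals X^{\ges -n}$; this is a subcomplex, $X=\bigcup_n X(n)$, and each $X(n)$ is bounded below. By the previous case $\Hom_A(E,X(n))\to X(n)$ is a quasi-isomorphism for each $n$. Because $E^i=0$ for $i<0$, in each fixed cohomological degree $m$ the group
\[
\Hom_A(E,X(n))^m=\prod_{i\ges 0}\Hom_A(E^i,X(n)^{i+m})
\]
already equals $\Hom_A(E,X)^m$ as soon as $n\ges -m$, so $\Hom_A(E,X)=\colim_n \Hom_A(E,X(n))$ as complexes. Exactness of filtered colimits then yields
\[
H^m \Hom_A(E,X)\cong \colim_n H^m\Hom_A(E,X(n))\cong \colim_n H^m X(n)\cong H^m X,
\]
and this isomorphism is induced by \eqref{eq:AE}.

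The main obstacle I anticipate is justifying the interchange of the total-$\Hom$ complex with the filtration $X=\bigcup_n X(n)$. It will work here only because $E$ is supported in non-negative cohomological degrees, which forces the product $\prod_{i\ges 0}\Hom_A(E^i,-)$ to stabilize in each slot of each fixed total degree; in general products do not commute with filtered colimits, so without this boundedness the strategy would fail.
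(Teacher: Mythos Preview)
Your proof is correct. Both you and the paper exploit that bounded-below complexes of injectives are K-injective, together with the fact that $E$ (hence the cone of $A\to E$) is concentrated in non-negative degrees. The paper's execution is, however, more economical: rather than filtering $X$ by all truncations $X(n)$ and passing to a colimit, it makes the single observation that for $W$ acyclic and bounded below (say $W^i=0$ for $i<0$) one has $\KHom(W,X)=\KHom(W,X^{\ges -1})$ outright, since any chain map and any chain homotopy from such a $W$ to $X$ already land in the subcomplex $X^{\ges -1}$. This reduces the problem in one stroke to the K-injective target $X^{\ges -1}$, bypassing the degreewise stabilization check and the appeal to exactness of filtered colimits. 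Your colimit argument works, but is doing more bookkeeping than is needed; the remark that $E$ itself is K-injective over $A$ is also true but not actually used in your argument.
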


\begin{proof}
  By considering the mapping cone of $A\to E$, the desired statement
  reduces to: For any complex $W\in\KMod A$ that is acyclic and
  satisfies $W^i=0$ for $i\ll 0$, one has $\KHom(W,X)=0$.  Without
  loss of generality we can assume $W^{i}=0$ for $i<0$. Then one gets
  the first equality below
\[\KHom (W,X)  = \KHom (W, X^{\ges -1}) =0,\]
and the second one holds because $X^{\ges -1}$ is K-injective.
\end{proof}

Since $A$ is projective as an $R$-module, $\env A$ is projective as
an $A$-module both on the left and on the right. The latter condition
implies, by adjunction, that as a complex of left $A$-modules $E$
consists of injectives. In particular, for any projective $A$-module
$P$ the $A$-complex $E\otimes_AP$ consists of injective modules. Thus
one has an exact functor
\[
E\otimes_A -\colon \KProj A \lra \KInj A\,.
\]
For each $X$ in $\KInj A$ one has isomorphisms
\begin{align*}
\KHom(E\otimes_A \pres  X,X) 
	& \cong \KHom (\pres X,\Hom_{A}(E,X)) \\
	&\cong \KHom(\pres X, X)\,.
\end{align*}
The second isomorphism is a consequence of \intref{Lemma}{lem:AE} and the
K-projectivity of $\pres X$. Thus, corresponding to the morphism
$\pres X\to X$ there is natural morphism
\begin{equation}
\label{eq:pi}
\pi(X) \colon E\otimes_A \pres  X \lra X
\end{equation}
of complexes of $A$-modules. 

\begin{lemma}
\label{lem:pi-quasiiso}
The morphism $\pi(X)$ in \eqref{eq:pi} is a quasi-isomorphism for  each $X$.
\end{lemma}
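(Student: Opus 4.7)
The plan is to exhibit $\pi(X)$ as the second factor in a natural factorization of the K-projective resolution $\pres X \to X$:
\[
\pres X \xra{\sigma} E \otimes_A \pres X \xra{\pi(X)} X.
\]
Here $\sigma$ is the map $p \mapsto 1 \otimes p$ of complexes of left $A$-modules, induced by the structural bimodule morphism $A \to E$; it is a chain map because $A$ sits in degree zero and the image of $1 \in A$ is a cocycle in $E$.

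To see that $\pi(X) \comp \sigma$ is the K-projective resolution, trace through the adjunction chain used to define $\pi(X)$. Let $\tilde\pi \col \pres X \to \Hom_A(E,X)$ be the image of $\pi(X)$ under the tensor--Hom adjunction, so that $\pi(X)(e \otimes p) = \tilde\pi(p)(e)$. By construction, $\tilde\pi$ is the lift (well defined up to homotopy because $\pres X$ is K-projective) of the K-projective resolution $\pres X \to X$ along the quasi-isomorphism $\Hom_A(E,X) \to X$ of \intref{Lemma}{lem:AE}. In particular, $\pi(X) \comp \sigma (p) = \tilde\pi(p)(1)$, which equals the image of $\tilde\pi(p)$ under $\Hom_A(E,X) \to X$, namely the value of the K-projective resolution at $p$. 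Hence $\pi(X) \comp \sigma$ is the K-projective resolution, which is a quasi-isomorphism.

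It then suffices to show that $\sigma$ itself is a quasi-isomorphism. Its mapping cone is $\cone(A \to E) \otimes_A \pres X$, and $\cone(A \to E)$ is acyclic as a complex of right $A$-modules since $A \to E$ is an injective resolution. Invoke now the standard fact that every K-projective complex is K-flat --- verified by dualising into an injective cogenerator of abelian groups and appealing to the tensor--Hom adjunction together with the defining property of K-projectivity. Thus $\cone(A \to E) \otimes_A \pres X$ is acyclic, so $\sigma$ is a quasi-isomorphism, and then so is $\pi(X)$. The only genuinely non-routine step is the adjunction identification in the middle paragraph; the rest is bookkeeping and standard homological algebra.
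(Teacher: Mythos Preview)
Your proof is correct and is essentially the same as the paper's. The paper packages your factorization $\pres X \xra{\sigma} E\otimes_A\pres X \xra{\pi(X)} X$ as a commutative square with vertical maps $\eta\otimes_A\pres X$ and $\eps$, then invokes two-out-of-three; your $\sigma$ is precisely $\eta\otimes_A\pres X$ after identifying $A\otimes_A\pres X\cong\pres X$. You supply more detail than the paper on the two places it is terse---the commutativity of the square (your adjunction trace) and why $\eta\otimes_A\pres X$ is a quasi-isomorphism (your K-flat argument)---but the strategy is identical.
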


\begin{proof}
Let $\eta\colon A\to E$ and $\eps\colon \pres X\to X$ denote the structure maps. These fit in the commutative diagram
\[
\begin{tikzcd}[column sep = large]
A\otimes_A\pres X \arrow{d}[swap]{\eta \otimes_A \pres  X } \arrow{r}{\sim} 
	& \pres  X \arrow{d}{\eps } \\
E \otimes_{A} \pres X \arrow{r}{\pi(X)} 
	& X\,.
\end{tikzcd}
\]
The map $\eta\otimes_A\pres X $ is a quasi-isomorphism as $\eta$ is
one and $\pres X$ is K-projective. Thus $\pi(X)$ is a
quasi-isomorphism.
\end{proof}

\subsection*{The stabilisation functor}
The functor $\bfs \colon \KInj A\to \KacInj A$ from \eqref{eq:recollement} admits the following description in terms of its kernel, which uses the natural transformation $\pi\colon E\otimes_A \pres(-)\to \id$ of functors on $\KInj A$ from \eqref{eq:pi}.

\begin{lemma}
\label{lem:lambda2}
Each  object $X$ in $\KInj A$ fits into an exact triangle
\[
E\otimes_A \pres  X \xra{\ \pi(X) \ } X \lra \bfs X \lra
\]
and this yields a natural isomorphism $E\otimes_A\pres X\iso \ipres X$.
\end{lemma}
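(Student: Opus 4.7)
The plan is to complete $\pi(X)$ to an exact triangle and then identify its constituents with the recollement decomposition of $X$. I would begin by extending $\pi(X)$ to an exact triangle
\[
E\otimes_A \pres X \xra{\pi(X)} X \lra C \lra
\]
in $\KInj A$. By \intref{Lemma}{lem:pi-quasiiso} the map $\pi(X)$ is a quasi-isomorphism, so the cone $C$ is acyclic, placing $C$ in $\KacInj A = \Ker(\bfq)$.

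The substantive step is to verify that $E\otimes_A \pres X$ lies in $\Loc(\ires A)$, which equals $\Ker(\bfs) = \mathrm{Im}(\ipres)$. First, $E$ is a bounded-below complex of injective left $A$-modules (by the right-projectivity of $\env A$ over $A$ noted above the lemma), and the structure map $A\to E$ is a quasi-isomorphism; hence $E$ is a K-injective resolution of $A$ and so $E\cong \ires A$ in $\KInj A$. Second, I would show $\kproj A = \Loc_{\KProj A}(A)$: the functor $\pres\colon \dcat A\to \KProj A$ is left adjoint to $\bfq$, hence preserves coproducts, and $\pres A \cong A$; since $A$ compactly generates $\dcat A$, the essential image $\kproj A$ of $\pres$ is contained in $\Loc_{\KProj A}(A)$, and the reverse inclusion is automatic. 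In particular $\pres X \in \Loc_{\KProj A}(A)$. Finally, the triangulated functor $E\otimes_A(-)\colon \KProj A\to \KInj A$ preserves coproducts and sends $A$ to $E\cong \ires A$, so it carries $\Loc(A)$ into $\Loc(\ires A)$, whence $E\otimes_A \pres X$ belongs to $\Loc(\ires A)$.

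With both containments in hand, the conclusion follows from the uniqueness of the recollement decomposition: any exact triangle $Y\to X\to Z\to$ with $Y\in \mathrm{Im}(\ipres)$ and $Z\in \KacInj A$ is isomorphic (uniquely) to the standard recollement triangle $\ipres X\to X\to \bfs X\to$, because $\Hom(\ipres D,\text{incl}\,W) \cong \Hom(D, \bfq\,\text{incl}\,W)=0$ for $D\in\dcat A$ and $W\in\KacInj A$. This identifies $C\cong \bfs X$, yielding the triangle in the statement, together with the isomorphism $E\otimes_A\pres X\iso \ipres X$; naturality in $X$ is inherited from the naturality of $\pi(X)$ and of the cone construction.

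The main obstacle is the middle paragraph: correctly identifying $E$ with $\ires A$ in $\KInj A$ and transporting that identification through the tensor functor to land $E\otimes_A \pres X$ in $\Loc(\ires A)$. The remaining pieces, the acyclicity of $C$ and the identification of the cone with $\bfs X$, are formal consequences of \intref{Lemma}{lem:pi-quasiiso} and of the recollement~\eqref{eq:recollement}.
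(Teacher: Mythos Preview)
Your proof is correct and follows essentially the same approach as the paper's. The only minor difference is in placing $E\otimes_A\pres X$ inside $\Loc(\ires A)$: you identify $E\cong\ires A$ directly as a bounded-below complex of injectives quasi-isomorphic to $A$, whereas the paper instead invokes \intref{Lemma}{lem:AE} to show $\KHom(E,W)=0$ for all acyclic $W$, thereby placing $\Loc(E)$ in the left orthogonal of $\KacInj A$.
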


\begin{proof}
  Since $\pi(X)$ is a quasi-isomorphism, by
  \intref{Lemma}{lem:pi-quasiiso}, the complex $\bfs X$ is acyclic.
  In $\KProj A$, the complex $\pres X$ is in $\Loc(A)$, and hence in
  $\KInj A$ the complex $E\otimes_A \pres X$ is in $\Loc(E)$. It
  remains to observe that if $W\in \KInj A$ is acyclic then
  $\KHom (E,W)=0$ by \intref{Lemma}{lem:AE}.
\end{proof}

\section{The Nakayama functor and its completion}
\label{se:Nakayam-I}
The Nakayama functor is a standard tool in representation theory of Artin algebras. For instance, the functor interchanges projective and injective modules, thereby providing an efficient method to compute the Auslander-Reiten translate of a finitely generated module \cite{Gabriel:1980}. In this section we discuss  the extension of the Nakayama functor from modules to the homotopy category of injectives.

Throughout the rest of this work we say that a ring $A$ is a
\emph{finite $R$-algebra} if
\begin{enumerate}
\item $R$ is a commutative noetherian ring;
\item $A$ is an $R$-algebra, that is to say, there is a map of rings $R\to A$ whose image is in the centre of $A$;
\item $A$ is finitely generated as an $R$-module.
\end{enumerate}
These conditions imply that $A$ is a noetherian ring, finitely generated as a module over its centre, which is thus also noetherian. Hence $A$ is a
finite algebra over its centre.  When $A$ is a finite $R$-algebra, so is the opposite ring $\op A$.

Let $A$ be a finite $R$-algebra. Following
Buchweitz~\cite[\S7.6]{Buchweitz:1986}, which in turn is inspired by the
terminology in commutative algebra, we call the $A$-bimodule
\[
\omega_{A/R}\colonequals \Hom_R(A,R)
\]
the \emph{dualising bimodule} of the $R$-algebra $A$.  It is finitely generated as an $A$-module, on either side.  Extending the terminology from the context of finite dimensional algebras over fields we call
\begin{equation}
\label{eq:Nakayama}
\kan_{A/R}\colonequals \Hom_A(\omega_{A/R},-) \colon \Mod A\lra \Mod A
\end{equation}
the \emph{Nakayama functor} of the $R$-algebra $A$. Sometimes this
name is used for the functor $\omega_{A/R}\otimes_A-$, which is left
adjoint to $\kan_{A/R}$, but in this work the one above plays a more
central role, hence our choice of nomenclature. When the algebra in
question is clear we drop the ``$A/R$" from subscripts, to write
$\omega$ and $\kan$. In our applications $A$ will be projective as
an $R$-module. Then the left adjoint of $\kan_{A/R}$ is a
Nakayama functor relative to the restriction $\Mod A\to \Mod R$ in the
sense of Kvamme \cite{Kvamme:2020}.

The Nakayama functor can be extended to $\dcat A$, yielding the \emph{derived Nakayama functor}
\[
\RHom_A(\omega,-)\colon \dcat A\lra\dcat A\,.
\]
This functor and its left adjoint has been considered by several authors; see \cite{Happel:1988}. 
Here we study the extension to $\KInj A$, following  \cite[\S6]{Krause:2005}.

The Nakayama functor is evidently additive and admits therefore an
extension to $\KInj A$ as follows.  Extend $\kan$ to $\KMod A$, by
applying it term-wise; denote this functor also $\kan$. Brown
representability yields a left adjoint to the inclusion
$\KInj A\hookrightarrow \KMod A$, say $\lambda$. Set
\begin{equation}
\label{eq:bigNak}
\wh\kan_{A/R} \colon \KInj A\lra \KInj A  
\end{equation}
to be the composite of functors
\[
\KInj A \hookrightarrow \KMod A \xra{\ \kan\ } \KMod A \xra{\ \lambda\ } \KInj A\,.
\]
Our notation is motivated by the fact that $\KInj A$ can be viewed as
a completion of $\dbcat A$, as is explained in
\cite[\S2]{Krause:2005}. The next result is another reason for this
choice. Here $\bfK^{+}(\Inj A)$ denotes the full subcategory of
$\KInj A$ consisting of complexes $W$ that are bounded below. Note that
$\bfK^{+}(\Inj A)\iso \mathbf{D}^{+}(\Mod A)$.

\begin{lemma}
\label{lem:cNak}
On the subcategory $\bfK^{+}(\Inj A)$ there is
an isomorphism of functors
\[
\wh\kan_{A/R}\iso \ires\Hom_A(\omega_{A/R},-)\,.
\]
making the following diagram commutative:
\[
\begin{tikzcd}
  \Mod A \arrow[rightarrow]{d}{\kan} \arrow[tail]{r}{\mathrm{incl}} &
  \mathbf{D}^{+}(\Mod A) \arrow[rightarrow]{d}{\RHom_A(\omega,-)}
  \arrow[tail]{r}{\ires} &
  \KInj A \arrow[rightarrow]{d}{\wh\kan} \\
  \Mod A \arrow[tail]{r}{\mathrm{incl}} & \dcat A
  \arrow[tail]{r}{\ires} & \KInj A\,.
\end{tikzcd}
\]
The functor $\wh\kan_{A/R}\colon \KInj A\to \KInj A$ preserves
arbitrary direct sums, and on compact objects $\wh\kan$ identifies with the functor
\[
\RHom_A(\omega_{A/R},-) \colon \dbcat A\lra \dcat A\,.
\]
\end{lemma}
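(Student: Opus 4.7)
The plan is to prove the isomorphism on $\bfK^{+}(\Inj A)$ first, and then to deduce the remaining assertions from it. Given $X \in \bfK^{+}(\Inj A)$, the complex $X$ is K-injective (being a bounded-below complex of injectives), so the termwise functor $\kan X = \Hom_A(\omega_{A/R}, X)$ represents $\RHom_A(\omega_{A/R}, \bfq X)$ in $\dcat A$ and is itself a bounded-below complex of $A$-modules. Since $\wh\kan X = \lambda \kan X$ by construction, the desired identification $\wh\kan X \cong \ires \RHom_A(\omega_{A/R}, \bfq X)$ reduces to the sub-claim that for every $Y \in \bfK^{+}(\Mod A)$ one has $\lambda Y \cong \ires \bfq Y$ in $\KInj A$.

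To prove the sub-claim I would verify that the canonical quasi-isomorphism $\eta\colon Y \to \ires \bfq Y$ realises the universal property of $\lambda Y$. Writing $C$ for the cone of $\eta$ in $\KMod A$, which is bounded below and acyclic, this is equivalent, via the long exact sequence attached to the triangle $Y \to \ires \bfq Y \to C \to$, to the vanishing $\KHom_{\KMod A}(C, Z) = 0$ for every $Z \in \KInj A$. That vanishing would be proved by an inductive null-homotopy argument: given a chain map $f\colon C \to Z$, define $h^n\colon C^n \to Z^{n-1}$ by induction on $n$, starting with $h^n = 0$ for $n$ below the range of $C$. At each step, acyclicity of $C$ and the inductive hypothesis force $f^n - d_Z^{n-1} h^n$ to vanish on $\Ker d_C^n = \Im d_C^{n-1}$, hence to factor through the quotient $C^n \twoheadrightarrow \Im d_C^n$; termwise injectivity of $Z$ then extends the resulting map along $\Im d_C^n \hookrightarrow C^{n+1}$ to produce $h^{n+1}$.

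From this sub-claim the right square of the diagram commutes, and the left square commutes on $\Inj A \subseteq \Mod A$ because $\Hom_A(\omega_{A/R}, M) = \RHom_A(\omega_{A/R}, M)$ for $M$ injective. Preservation of direct sums is then formal: writing $\wh\kan = \lambda \circ \kan \circ \mathrm{incl}$, the inclusion $\KInj A \hookrightarrow \KMod A$ preserves arbitrary sums because $\Inj A$ is closed under sums over the noetherian ring $A$; the termwise Nakayama functor preserves sums because $\omega_{A/R}$ is finitely presented as an $A$-module (being finitely generated with $A$ noetherian); and $\lambda$ preserves all colimits as a left adjoint. Finally, the equivalence $\bfq\colon \KInjc A \iso \dbcat A$ places every compact object inside $\bfK^{+}(\Inj A)$, so the sub-claim identifies $\wh\kan|_{\KInjc A}$ with $\ires \circ \RHom_A(\omega_{A/R}, -)$; under the equivalence this is exactly $\RHom_A(\omega_{A/R}, -)\colon \dbcat A \to \dcat A$.

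The main obstacle I anticipate is the null-homotopy construction above; the argument itself is standard but requires careful bookkeeping of the indexing and of the interplay between acyclicity of $C$ (which ensures the relevant maps factor through images of $d_C$) and termwise injectivity of $Z$ (which provides the extension across $\Im d_C^n \hookrightarrow C^{n+1}$).
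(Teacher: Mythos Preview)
Your proposal is correct and follows essentially the same route as the paper: both reduce to the sub-claim that $\lambda Y \cong \ires Y$ for $Y\in\bfK^{+}(\Mod A)$, proved by showing that the cone $C$ of $Y\to\ires Y$ satisfies $\KHom(C,Z)=0$ for every $Z\in\KInj A$. The only difference is in how this vanishing is established: you construct the null-homotopy by hand, whereas the paper observes (as in its \intref{Lemma}{lem:AE}) that since $C$ is bounded below one may replace $Z$ by a brutal truncation $Z^{\ges n}$, which is K-injective, whence the vanishing is immediate---a slightly slicker argument that avoids the inductive bookkeeping you flagged as the main obstacle.
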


In general, the above square on the right will not be commutative if one
replaces $\mathbf{D}^{+}(\Mod A)$ by $\dcat A$; confer
\intref{Theorem}{thm:Morita-K}. We examine these functors in greater
detail in the next section.

\begin{proof}
Fix $X\in \bfK^+(\Mod A)$. The key observation is the following.

\begin{claim}
$\lambda X \iso \ires X$, the K-injective resolution of $X$.

\smallskip

Indeed since $X$ is bounded below one can assume that so is $\ires X$, and hence also the mapping cone, say $Z$, of the morphism $X\to \ires X$. Since $Z$ is also acyclic, arguing as in the proof of \intref{Lemma}{lem:AE} one gets that $\KHom(Z,Y)=0$ for any $Y\in \KInj A$. Thus the
morphism $X\to \ires X$ induces an isomorphism
\[
\KHom(\ires X,Y) \iso \KHom(X,Y)\,,
\]
and this justifies the claim. 
\end{claim}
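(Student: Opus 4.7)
The plan is to verify the claim directly by showing that the K-injective resolution $\ires X$, together with the canonical map $X \to \ires X$, satisfies the universal property that characterises $\lambda X$ as the value of the left adjoint to the inclusion $\KInj A \hookrightarrow \KMod A$. Concretely, one needs: (i) $\ires X \in \KInj A$, which holds by definition; and (ii) for every $Y \in \KInj A$, the map $X \to \ires X$ induces an isomorphism
\[
\KHom(\ires X, Y) \longiso \KHom_{\KMod A}(X, Y).
\]
Once (ii) is established, $\ires X$ represents the same functor as $\lambda X$, giving the canonical isomorphism in $\KInj A$.

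For (ii), I would first exploit boundedness. Since $X$ is bounded below, the K-injective resolution $\ires X$ can be chosen to be bounded below as well (take a Cartan–Eilenberg/injective resolution in each degree and total). Consequently the mapping cone $Z$ of $X \to \ires X$ is bounded below and acyclic. A standard triangle argument then reduces (ii) to the vanishing statement $\KHom(Z, Y) = 0$ for every $Y \in \KInj A$.

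To obtain this vanishing I would replicate the argument already used in \intref{Lemma}{lem:AE}. By shifting we may assume $Z^{i} = 0$ for $i < 0$. Then any chain map $Z \to Y$ factors through the truncation $Y^{\ges -1}$, which is a bounded-below complex of injectives and hence K-injective; since $Z$ is acyclic, $\KHom(Z, Y^{\ges -1}) = 0$, so $\KHom(Z, Y) = 0$ as required.

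The argument is essentially formal, so no real obstacle arises; the only subtlety is to make sure the K-injective resolution of a bounded-below complex can itself be chosen bounded below, which is what lets us reuse the clean truncation argument from \intref{Lemma}{lem:AE}. Once this is in place, naturality of $X \to \ires X$ yields the isomorphism $\lambda X \iso \ires X$ claimed.
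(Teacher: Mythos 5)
Your proof is correct and follows essentially the same route as the paper: reduce the identification $\lambda X\iso\ires X$ to showing that $X\to\ires X$ induces an isomorphism $\KHom(\ires X,Y)\iso\KHom(X,Y)$ for all $Y\in\KInj A$, which in turn reduces to the vanishing $\KHom(Z,Y)=0$ for the bounded-below acyclic cone $Z$, proved by the truncation argument of Lemma~\ref{lem:AE}. Your extra remark that the K-injective resolution of a bounded-below complex can itself be taken bounded below is exactly the point the paper asserts with ``one can assume that so is $\ires X$''.
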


When $X$ is bounded below so is $\Hom_A(\omega,X)$. Thus the claim
above yields
\[
\wh\kan(X) = \lambda \Hom_A(\omega, X) \cong \ires \Hom_A(\omega, X)\,.
\]

Now fix  $X\in \mathbf{D}^{+}(\Mod A)$. Again, one can assume $\ires
X$ is also bounded below, and therefore
\[ 
\wh\kan(\ires X)= \lambda \Hom_A(\omega,\ires X) \cong
  \ires\Hom_A(\omega,\ires X)=\ires\RHom_A(\omega,X) \,.
\]
This yields the commutativity of the right hand square.

For the second part of the lemma, it remains to note that the
functor $\kan$ preserves direct sums, as the $A$-module $\omega$ is
finitely generated, and $\lambda$ preserves direct sums, as it is a
left adjoint.
\end{proof}

\section{Gorenstein algebras and their derived categories}
\label{se:Gorenstein-dc}

In this section we introduce Gorenstein algebras and characterise them
in terms of the derived Nakayama functor. This generalises a
well-known fact for Artin algebras.  In that case the algebra is
Gorenstein if and only if the dualising module is a tilting module so
that the derived Nakayama functor is an equivalence.

\subsection*{Commutative Gorenstein rings}
A commutative noetherian ring $R$ is \emph{Gorenstein} if for each prime (equivalently, maximal) ideal $\fp$, the local ring $R_\fp$ has finite injective dimension as a module over itself \cite{Bass:1963}. When the Krull dimension of $R$ is finite, this condition is equivalent to $R$ itself having finite injective dimension; see \cite[Theorem, \S1]{Bass:1963} for details.
 
\subsection*{Gorenstein algebras}
We say that a ring $A$ is a \emph{Gorenstein} $R$-algebra if
\begin{enumerate}[\quad\rm(1)]
\item
$A$ is a finite $R$-algebra;
\item
$A$ is projective as an $R$-module;
\item
$A_\fp$ is Iwanaga-Gorenstein for each  $\fp\in\Spec R$ with
$A_\fp\neq 0$.
\end{enumerate}
Condition (3) means $A_\fp$ has finite injective dimension as a module
over itself, on the left and on the right; then the injective
dimensions coincide; see \cite[Lemma~A]{Za1969}.

The following lemma provides a comparison between $A$ and $R$ 
with respect to the Gorenstein property.

\begin{lemma}
\label{lem:RGor}
Let $A$ be a Gorenstein $R$-algebra and  $\fp\in\Spec R$.
Then the  ring $R_\fp$ is Gorenstein whenever $A_\fp\neq 0$.
\end{lemma}

\begin{proof}
  As the $R$-module $A$ is projective so is the $R_{\fp}$-module
  $A_{\fp}$, and hence for each finitely generated $R_\fp$-module $M$
  one has the isomorphism below
\[
  \Ext_{R_\fp}^i(M,R_\fp)\otimes_{R_{\fp}} A_{\fp} \cong
  \Ext_{A_\fp}^i(M \otimes_{R_\fp} A_\fp,A_\fp)=0 \qquad\text{for
    $i\gg 0$}.
\]
The equality on the right holds because the injective dimension of
$A_\fp$ is finite. We deduce from the computation above that
$\Ext_{R_\fp}^i(M,R_\fp)=0$ for $i \gg 0$, since $A_\fp\neq 0$. Hence
$R_\fp$ is Gorenstein; see \cite[Proposition
3.1.14]{Bruns/Herzog:1998a}.
\end{proof}

Let $A$ be a finite $R$-algebra that is projective as an
$R$-module. Then $R$ admits a decomposition
$R'\times R^{\prime\prime}$ such that $A_\fp\neq 0$ for all $\fp\in\Spec R'$ and $A$ is
finitely generated over $R'$. Thus one may assume that $A$ is faithful
as an $R$-module, and then the Gorenstein property for $A$ implies
that $R$ is Gorenstein; see \cite{Gnedin/Iyengar/Krause:2022} for details.

The preceding result has also a converse, but this plays no role in
the sequel so we discuss this at the end of this section; see
\intref{Theorem}{thm:Gconverse}. The Gorenstein condition is reflected
also in the dualising bimodule of the $R$-algebra $A$. To discuss
this, we recall some aspects of perfect complexes over finite
algebras.

Let $A$ be a finite $R$-algebra and $M$ a complex of $A$-modules. Recall that $M$ is \emph{perfect} if it is isomorphic in $\dcat A$ to a bounded complex of finitely generated projective $A$-modules; equivalently, $M$ is compact, as an object in the triangulated category $\dcat A$; equivalently, $M$ is in $\Thick(A)$; see \cite[Theorem~2.2]{Neeman:1992}.

The following criterion for detecting perfect complexes will be handy.

\begin{lemma}
\label{lem:perfect}
Let $A$ be a finite $R$-algebra. For $M\in\dbcat A$ the following conditions are
equivalent.
\begin{enumerate}[\quad\rm(1)]
\item $M$ is perfect in $\dcat{A}$
\item $M_\fm$ is perfect in $\dcat{A_\fm}$ for each maximal ideal $\fm$ in $R$.
\item $\Tor^A_i(L,M)=0$  for each $L\in\mod \op A$ and $i\gg 0$.
\end{enumerate}
\end{lemma}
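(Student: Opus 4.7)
The plan is to prove (1)$\Rightarrow$(2) and (1)$\Rightarrow$(3) trivially, (3)$\Rightarrow$(2) by a local reduction and a noncommutative Nakayama argument, and (2)$\Rightarrow$(1) via a descending chain of closed subsets in $\Spec R$. The implications (1)$\Rightarrow$(2) and (1)$\Rightarrow$(3) are both immediate: a bounded complex of finitely generated projective $A$-modules remains bounded and term-wise projective after either localizing at any $\fm$ (giving (2)) or tensoring with an arbitrary $\op A$-module (giving (3)).

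For (3)$\Rightarrow$(2), fix a maximal ideal $\fm$ of $R$ and apply (3) with $L = A/\fm A$, a finitely generated $\op A$-module. Localizing yields $\Tor_i^{A_\fm}(A_\fm/\fm A_\fm, M_\fm) = 0$ for $i \gg 0$. Since $A_\fm/\fm A_\fm$ is a finite-dimensional algebra over the residue field $k(\fm)$, it has only finitely many simple right modules $S_1,\dots,S_t$; and because $\fm A_\fm \subseteq \operatorname{rad}(A_\fm)$, each $S_j$ appears as a composition factor of $A_\fm/\fm A_\fm$. By noetherianity any finite composition series of $A_\fm/\fm A_\fm$ lifts to a filtration of $A/\fm A$ by finitely generated $\op A$-submodules, so applying (3) to the successive quotients and localizing produces, uniformly in $j$, $\Tor_i^{A_\fm}(S_j, M_\fm) = 0$ for $i \geq N$, and hence $\Tor_i^{A_\fm}(A_\fm/\operatorname{rad}(A_\fm), M_\fm) = 0$ for $i \geq N$. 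The noncommutative local form of Nakayama's lemma --- a finitely generated $A_\fm$-module $N$ is projective iff $\Tor_1^{A_\fm}(A_\fm/\operatorname{rad}(A_\fm), N) = 0$ --- then forces the $N$-th syzygy of $M_\fm$ to be projective, proving (2).

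For (2)$\Rightarrow$(1), resolve $M$ by a complex $P \to M$ of finitely generated projective $A$-modules and let $\Omega^n M \in \mod A$ denote the $n$-th syzygy. Pick a surjection $F \twoheadrightarrow \Omega^n M$ with $F$ finitely generated free and kernel $K$; then $\Ext_A^1(\Omega^n M, K)$ is finitely generated over $R$, and its support
\[
V_n = \{\fm \in \Spec R : \pdim_{A_\fm}(M_\fm) > n\}
\]
is a closed subset of $\Spec R$. The inclusions $V_n \supseteq V_{n+1}$ together with hypothesis (2) give $\bigcap_n V_n = \emptyset$, and the noetherianity of $\Spec R$ forces $V_n = \emptyset$ for $n$ sufficiently large. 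Since the vanishing of a finitely generated $R$-module can be checked at maximal ideals, $\Omega^n M$ is itself projective for such $n$, and hence $M$ is perfect.

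The main obstacle is precisely this uniformity step: pointwise finiteness of $\pdim_{A_\fm}(M_\fm)$ does not automatically yield a uniform bound over $\fm$, and the descending chain argument on the noetherian space $\Spec R$ --- underwritten by the coherence of $\Ext_A^1(\Omega^n M, K)$ as an $R$-module --- is what supplies it. The remaining implications are either formal or reduce to a local Nakayama computation.
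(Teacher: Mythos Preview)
Your proof is correct and considerably more detailed than the paper's, which simply cites Bass [\emph{Algebraic $K$-theory}, Proposition~III.6.6] for (1)$\Leftrightarrow$(2) and refers the reader to an argument of Avramov--Iyengar for (3)$\Rightarrow$(1). You close the cycle differently, via (3)$\Rightarrow$(2)$\Rightarrow$(1), and your descending-chain argument on the Noetherian space $\Spec R$ is a clean, self-contained way to extract a uniform bound on projective dimension from the pointwise hypothesis~(2); this is essentially what Bass's proof does as well.

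One step deserves a word of justification: the ``noncommutative local Nakayama'' you invoke, namely that a finitely generated $A_\fm$-module $N$ with $\Tor_1^{A_\fm}(A_\fm/\operatorname{rad}(A_\fm),N)=0$ is projective. The ring $A_\fm$ is semi-local but need not be semi-perfect (idempotents need not lift modulo the radical), so this is not entirely formal. It is nevertheless true because $A_\fm$ is module-finite over the commutative local ring $R_\fm$: one can pass to the $\fm$-adic completion $\widehat{A_\fm}$, which \emph{is} semi-perfect, apply the projective-cover form of Nakayama there, and then descend projectivity along the faithfully flat map $R_\fm\to\widehat{R_\fm}$. Alternatively, since your hypothesis actually gives $\Tor_i^{A_\fm}(S,\Omega)=0$ for \emph{all} $i\ge 1$ and all simple $S$, an Artin--Rees argument on the $\fm$-adic filtration of an arbitrary finitely generated $L$ shows $\Tor_1^{A_\fm}(L,\Omega)=0$ directly.
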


\begin{proof}
The equivalence of (1) and (2) is  due to Bass~\cite[Proposition III.6.6]{Bass:1968}.  Evidently (1) implies (3), and the reverse implication can be verified by an argument akin to that for \cite[Theorem A.1.2]{Avramov/Iyengar:2019a}.
\end{proof}

\begin{remark}
We say that a complex $M$ of $A$-bimodules is \emph{perfect on both sides} if it is perfect both in $\dcat A$ and in $\dcat{\op A}$; said otherwise,  the restriction of $M$ along either map $A\to \env A \gets \op A$ is perfect, in the corresponding category.

We note also that when $M$ is a complex of $A$-bimodules,
$\RHom_A(M,A)$ has a left $A$-action induced by the right $A$-action
on $M$, and a right action induced by the right $A$-action on $A$. In
our context $A$ is a projective $R$-module, so one can realise
$\RHom_A(M,A)$ as a complex of bimodules, namely, the complex
$\Hom_A(M,\ires_{\env A}A)$.
\end{remark}

\begin{lemma}
\label{lem:biperfect}
Let $A$ be a finite $R$-algebra and $M$ a complex of $A$-bimodules that is perfect on both sides. The following statements hold:
\begin{enumerate}[\quad\rm(1)]
 \item
 There exists a quasi-isomorphism $P\to M$ of $A$-bimodules where $P$ is bounded, consisting of finitely generated $A$-bimodules that are projective on both sides.
\item
When $A$ is a Gorenstein $R$-algebra, $\RHom_A(M,A)$ is perfect on both sides.
 \end{enumerate}
\end{lemma}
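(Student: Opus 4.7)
The plan for part (1) is the standard truncation of a free bimodule resolution. The enveloping algebra $\env A = A\otimes_R A$ is a finite $R$-algebra and hence noetherian; and since $A$ is projective over $R$, the free bimodule $\env A$ is finitely generated and projective both as a left $A$-module and as a right $A$-module. As $M$ lies in $\dbcat{\env A}$, it admits a bounded-above bimodule resolution $P'^{\bullet}\to M$ whose terms are finite direct sums of $\env A$. The perfectness of $M$ on each side supplies finite projective dimensions $m = \pdim_A M$ and $m' = \pdim_{\op A} M$, and choosing $N$ sufficiently larger than both forces the $N$-th syzygy $\syz^N$ of $P'^{\bullet}$ to be projective as a left and as a right $A$-module. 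Replacing the tail of $P'^{\bullet}$ at degree $-N$ with $\syz^N$ then produces the required bounded bimodule resolution $P\to M$.

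For part (2), with $P\to M$ from (1), the complex $P$ is a bounded complex of finitely generated projective left $A$-modules, so $\RHom_A(M,A)\simeq \Hom_A(P,A)$. Each $\Hom_A(P^i,A)$ is a finitely generated projective right $A$-module by the standard duality for finitely generated left projective modules, hence $\Hom_A(P,A)$ is a bounded complex of such, and is perfect in $\dcat{\op A}$. For perfectness in $\dcat A$---where the left $A$-action on $\Hom_A(P^i,A)$ comes from the right $A$-action on $P^i$---Lemma~\ref{lem:perfect} localises the problem, so I would reduce to the case in which $R$ is local and $A$ is Iwanaga-Gorenstein.

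The main obstacle is the left-perfectness in this Iwanaga-Gorenstein setting, which I plan to handle via the tilting property of the dualising bimodule. Hom--tensor adjunction combined with the double-dual isomorphism $A\cong \Hom_R(\omega_{A/R},R)$ gives $\Hom_A(\env A,A)\cong \omega_{A/R}\otimes_R A$ as bimodules, exhibiting it as a direct summand of $\omega_{A/R}^{n}$ on the left; its left projective dimension thus equals $\pdim_A \omega_{A/R}$. Over a Gorenstein algebra the dualising bimodule is a tilting object in $\dcat A$---a property established in Section~\ref{se:Gorenstein-dc}---and consequently has finite projective dimension on the left. Combined with the bimodule resolution from (1), this controls the left projective dimension of each cohomology $\Ext^j_A(M,A)$ and hence forces $\Hom_A(P,A)$ to be perfect in $\dcat A$.
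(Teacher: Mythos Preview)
Your argument for part~(1) and for right-perfectness in part~(2) matches the paper's. For left-perfectness in~(2) you take a different route: the paper verifies the $\Tor$-criterion of Lemma~\ref{lem:perfect} directly, using the natural isomorphism
\[
L\lotimes_A \RHom_A(M,A) \longiso \RHom_A(\RHom_{\op A}(L,M),A)
\]
(valid since $M$ is perfect on the right) and bounding the right-hand side via the finite injective dimension of $A$ on each side. Your idea is instead to exploit the bimodule identification $\Hom_A(\env A,A)\cong\omega_{A/R}\otimes_R A$ together with the left-perfectness of $\omega_{A/R}$.

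The gap is in your final step. Knowing that every term of $\Hom_A(P',A)$ has left projective dimension at most $d=\pdim_A\omega_{A/R}$ does \emph{not} imply that each cohomology module $\Ext^j_A(M,A)$ has finite left projective dimension: cohomology of a complex of modules of finite projective dimension can have infinite projective dimension (over $k[x]/(x^2)$, the complex $A\xrightarrow{x}A$ has cohomology $k$). So the implication you state is not available. Your approach can nonetheless be salvaged: since $\Hom_A(P',A)$ is a bounded-below complex whose terms all have left projective dimension at most $d$, resolving each term and totalising yields a bounded-below complex of projective left $A$-modules quasi-isomorphic to $\RHom_A(M,A)$, concentrated in degrees $\ge -d$; hence $\Tor^A_i(L,\RHom_A(M,A))=0$ for $i>d$ and every $L\in\mod\op A$, and Lemma~\ref{lem:perfect} applies. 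A minor organisational point: you invoke the left-perfectness of $\omega_{A/R}$, which in the paper is established in Theorem~\ref{thm:Morita-D}, \emph{after} the present lemma; there is no genuine circularity (that part of Theorem~\ref{thm:Morita-D} does not use Lemma~\ref{lem:biperfect}), but the paper's argument avoids this dependence entirely by working directly with the finite injective dimension of $A_\fp$.
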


\begin{proof}
(1) The hypothesis on $M$ implies that the $\env A$-module $H^*(M)$ is  finitely generated. There thus exists a projective $\env A$-resolution, say $Q\to M$ with each $Q_i$ finitely generated and $0$ for $i\ll 0$. Fix an integer
\[
i\ge \max\{\pdim_{A}M,\pdim_{\op A}M\}\,.
\]
The morphism $Q\to M$ factors through the quotient complex
\[
P\colonequals 0\lra \Coker(d^Q_{i+1})\lra Q_i \lra Q_{i-1}\lra \cdots 
\]
Since $A$-modules $Q_i$ are projective on both sides, it follows by the choice of $i$ that so is the $A$-module $\Coker(d^Q_{i+1})$. Thus $P$ is the complex we seek.

(2) That $\RHom_A(M,A)$ is perfect on the right is clear; for example, it is equivalent to $\Hom_A(P,A)$ with $P$ as above; this does not involve the Gorenstein property.

As to the perfection on the left, by \intref{Lemma}{lem:perfect} it suffices to check the perfection locally on $\Spec R$. Thus  we can assume that the injective dimension of $A$ is finite. For any finitely generated $\op A$-module $L$ one has a natural isomorphism
\[
L\lotimes_A \RHom_A(M,A) \longiso \RHom_A(\RHom_{\op A}(L,M),A)\,.
\]
Since $M$ is perfect over $\op A$ and $A$ has finite injective dimension (on the right),  so does $M$ and hence $\hh {\RHom_{\op A}(L,M)}$ is bounded. Then the finiteness of the injective dimension of $A$ on the left implies that 
\[
\hh{\RHom_A(\RHom_{\op A}(L,M),A)}
\]
 is bounded. It thus follows from the quasi-isomorphism above that
 \[
\Tor_i^{A}(L,\RHom_A(M,A))=0\qquad\text{for $|i|\gg 0$.}
\]
This implies  $\RHom_A(M,A)$ is perfect on the left; see \intref{Lemma}{lem:perfect}.
\end{proof}

\subsection*{An equivalence of categories}
Let $A$ be a Gorenstein $R$-algebra, $\omega_{A/R}$ its dualising module, and $\kan_{A/R}$ the Nakayama functor; see~\eqref{eq:Nakayama}.  As for finite dimensional algebras~\cite{Happel:1991a} the derived functor of the Nakayama functor is an auto-equivalence of the bounded derived category. In other words, $\omega_{A/R}$ is a tilting complex for $A$.

\begin{theorem}
\label{thm:Morita-D}
Let $A$ be a Gorenstein $R$-algebra. The $A$-bimodule $\omega_{A/R}$ is perfect on both sides, and induces adjoint  equivalences of triangulated categories
\[
\begin{tikzcd}[column sep=large]
\dcat A   \arrow[rightarrow,yshift=1ex]{rr}{\omega_{A/R}\lotimes_A-}   && \dcat A\,.
  \arrow[rightarrow,yshift=-1ex]{ll}{\RHom_A(\omega_{A/R},-)}[swap]{\sim}
\end{tikzcd}
\]
Moreover, these restrict to adjoint equivalences on $\dbcat A$.
\end{theorem}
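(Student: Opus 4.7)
The plan is to show that the dualising bimodule $\omega_{A/R}$ is a tilting complex, whereupon the equivalences follow by a standard Morita-theoretic argument. Three ingredients are needed in order: (i) perfection of $\omega_{A/R}$ on both sides; (ii) the identity $\RHom_A(\omega_{A/R},\omega_{A/R})\iso A$; and (iii) the dual identity $\omega_{A/R}\lotimes_A\RHom_A(\omega_{A/R},A)\iso A$. The main obstacle is step~(i); once that is in hand, everything else reduces to a sequence of adjunctions and reflexivity computations.

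For (i), I would reduce to a local check via \intref{Lemma}{lem:perfect}(2). After localising at $\fp\in\Spec R$, the ring $A_\fp$ is Iwanaga-Gorenstein and, using \intref{Lemma}{lem:RGor} together with Bass's characterisation of local Gorenstein rings, $R_\fp$ has finite Krull dimension~$d$ and admits an injective resolution of that length. Applying the exact functor $\Hom_{R_\fp}(A_\fp,-)$ to such a resolution yields an injective resolution of $\omega_{A_\fp/R_\fp}$ of length~$d$ over $A_\fp$: the terms are injective as $A_\fp$-modules by the adjunction between restriction of scalars and $\Hom_{R_\fp}(A_\fp,-)$, combined with $R_\fp$-projectivity of $A_\fp$. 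Hence $\omega_{A_\fp/R_\fp}$ has finite injective dimension over $A_\fp$, and the classical theorem of Iwanaga (finite injective dimension coincides with finite projective dimension for finitely generated modules over an Iwanaga-Gorenstein ring) concludes perfection. The right-handed case is symmetric.

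For (ii), take a projective $A$-resolution $P_\bullet \to \omega_{A/R}$. Tensor-Hom adjunction gives $\Hom_A(P_\bullet,\omega_{A/R}) = \Hom_A(P_\bullet,\Hom_R(A,R)) \cong \Hom_R(P_\bullet,R)$; each $P_i$ is projective over $R$ as well, so $P_\bullet$ is simultaneously an $R$-projective resolution of $\omega_{A/R}$, which is itself finitely generated and $R$-projective. Thus $\Ext_R^{>0}(\omega_{A/R},R)=0$ and $\Hom_R(\omega_{A/R},R)\iso A$ by $R$-reflexivity of $A$, and tracking natural maps shows this identification coincides with the unit of adjunction. For (iii), I would first observe that
\[
\omega_{A/R}\lotimes_A M \iso \RHom_R(\RHom_A(M,A),R)
\]
holds for every perfect $M\in\dcat A$, because both sides are triangulated functors of $M$ agreeing at $M=A$ and the agreement locus is thick. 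Applying this to $M = \RHom_A(\omega_{A/R},A)$, which is perfect by \intref{Lemma}{lem:biperfect}(2), and then using biduality $\RHom_A(\RHom_A(\omega_{A/R},A),A)\iso \omega_{A/R}$ together with $R$-reflexivity of $A$, yields $\omega_{A/R}\lotimes_A\RHom_A(\omega_{A/R},A)\iso A$. Because $\omega_{A/R}$ is perfect, both functors preserve coproducts, so these identities at the compact generator $A$ propagate to isomorphisms of unit and counit on all of $\dcat A$. The restriction to $\dbcat A$ is automatic since $\omega_{A/R}$ is a bounded complex of finitely generated projectives on each side.
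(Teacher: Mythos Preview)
Your steps (i) and (ii) are sound. Step (i) is actually a cleaner route to perfection than the paper's: the paper builds a bounded complex of flat $A_\fp$-modules via $\Hom_{R_\fp}(\ires A_\fp,\ires R_\fp)$ and then passes from finite flat dimension to finite projective dimension, whereas you go through finite injective dimension and Iwanaga's theorem. Both work; yours is shorter. Step (ii) is essentially identical to the paper's computation of $\RHom_A(\omega,\omega)\cong A$.

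The gap is in step (iii). The ``biduality'' you invoke,
\[
\RHom_A\bigl(\RHom_A(\omega_{A/R},A),\,A\bigr)\;\cong\;\omega_{A/R},
\]
is \emph{not} the standard biduality for perfect complexes. For a perfect left $A$-module $P$ the correct statement is $P\cong\RHom_{\op A}(\RHom_A(P,A),A)$: the inner dual is a right $A$-module, and the outer $\RHom$ must be taken over $\op A$. To feed $M=\RHom_A(\omega,A)$ into your formula $\omega\lotimes_A M\cong\RHom_R(\RHom_A(M,A),R)$ you are instead using the \emph{left} $A$-structure on $\RHom_A(\omega,A)$ that comes from the right action on $\omega$, and then computing $\RHom_A(-,A)$ with respect to that. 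This ``mixed'' double dual is not covered by ordinary biduality; the naive evaluation map $w\mapsto(\phi\mapsto\phi(w))$ does not even land in the left-linear $\Hom$. One can check by hand that the two sides are abstractly isomorphic when $\omega$ is a free bimodule, but extending this to bimodules that are merely projective on each side is not automatic, and in any case producing a \emph{natural} isomorphism here is tantamount to proving the counit is an isomorphism---precisely what (iii) is meant to establish.

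This is exactly the bookkeeping that the paper's introduction of $E=\End_A(\omega)$ resolves: once one records the isomorphism $A\iso\op E$, the right $A$-action on $\omega$ becomes a left $E$-action, and the counit factors through $\RHom_A(\RHom_E(\omega,\omega),M)\cong\RHom_A(A,M)\cong M$, using only the compactness of $\omega$ in $\dcat E$ (equivalently, perfection of $\omega$ on the right) and the identity $\RHom_E(\omega,\omega)\cong A$. Your ingredients (i) and (ii) already give you everything needed for this; the fix is to run the counit argument through $E$ rather than through an unjustified self-duality.
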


\begin{proof}
The argument becomes a bit more transparent once we consider the ring $E\colonequals \End_A(\omega)$, and its natural left action on $\omega$ that is compatible with the left $A$-module structure. We verify first the following properties of $\omega$:
\begin{enumerate}
\item
The natural maps $A\to \op E$ and $A\to \End_{E}(\omega)$ of rings are isomorphisms.
\item
$\Ext_A^i(\omega,\omega)=0=\Ext_E^i(\omega,\omega)$ for $i\ge 1$.
\item
$\omega$ is compact both in $\dcat A$ and in $\dcat E$.
\end{enumerate}
The first map in (1) is
\[
A\lra \op{\End_A(\omega)} \quad \text{where $a\mapsto (w\mapsto wa)$}
\]
A routine computation reveals that this is indeed a map of rings. Its bijectivity follows from the computation:
\begin{align*}
\RHom_A(\omega,\omega) 
	&\cong \RHom_R(\Hom_R(A,R),R) \\
	& \cong \Hom_R(\Hom_R(A,R),R) \\
	& \cong A
\end{align*}
where the first isomorphism is adjunction, and the others hold because the $R$-module $A$ is finite and projective. The computation above also establishes that $\Ext^i_A(\omega,\omega)=0$ for $i\ge 1$. The  justifices the first parts of the (1) and (2). Given that $A\iso \op E$, applying the already established part of the result to $\op A$ completes the argument for (1) and (2).

It remains to verify (3), and again, given that $E\cong \op A$ as rings, it suffices to check that $\omega$ is perfect in $\dcat A$. Since the $A$-module $\omega$ is finitely generated it suffices to prove that it has finite projective dimension as an $A$-module.  By \intref{Lemma}{lem:perfect} it suffices to verify that the $A_\fp$-module $M_\fp$ has finite projective dimension for each $\fp\in \Spec R$.  Since
\[
\Hom_{R_\fp}(A_\fp,R_\fp) \cong \Hom_{R}(A,R)_\fp
\]
as $A_\fp$-bimodules, and $A_\fp$ is a Gorenstein $R_\fp$-algebra, replacing $R$ and $A$ by their localisations at $\fp$ we can assume that $(R,\fm,k)$ is a local ring and $A$ is a Gorenstein $R$-algebra of finite injective dimension; the desired conclusion is that the projective dimension of $\Hom_R(A,R)$ is finite. At this point one can invoke \cite[Proposition~7.6.3(ii)]{Buchweitz:1986} to complete the proof. The proof of \emph{op.~cit.} uses the theory of Cohen-Macaulay approximations. Here is a direct argument:

Since $R$ is Gorenstein, by \intref{Lemma}{lem:RGor}, and local, it has
finite injective dimension; choose a finite injective resolution
$R\to\ires R$. Choose also a finite injective resolution
$A\to\ires A$. Then $\Hom_R(\ires A,\ires R)$ is a bounded
complex of flat $A$-modules, quasi-isomorphic to $\Hom_R(A,R)$; thus
the $A$-module $\Hom_R(A,R)$ has finite flat dimension. Since it is
also finitely generated, it follows that its projective dimension is
finite; see \intref{Lemma}{lem:perfect}.

This completes the proofs of assertions (1)--(3). 

Next we verify the stated equivalence of (the full derived) categories. This is a standard argument, given the properties of $\omega$. Here is a sketch: To begin with, given the isomorphism  $A\cong \op E$ of rings, the stated adjunction can be factored as
\[
\begin{tikzcd}[column sep=large]
 \dcat A \arrow[rightarrow,yshift=.75ex]{rr}{ \RHom_A(\omega,-)}
 	&& \dcat {\op E} \arrow[rightarrow,yshift=-.75ex]{ll}{-\lotimes_E\omega }
	& \arrow[leftarrow]{l}[swap]{\sim} \dcat A\,.
\end{tikzcd}
\]
It thus suffices to verify that the adjoint pair on the left are quasi-inverses to each other, that is to say that their counit and unit of the adjunction are isomorphisms. The counit is  the evaluation map
\[
\varepsilon(M)\colon \RHom_A(\omega, M)\lotimes_E \omega \lra M \quad\text{for $M$ in $\dcat A$.}
\]
The map above is an isomorphism for it factors as the composition of isomorphisms
\begin{align*}
\RHom_A(\omega, M)\lotimes_E \omega 
	& \longiso  \RHom_A(\RHom_E(\omega,\omega),M) \\
	& \longiso \RHom_A(A,M) \\
	& \longiso M
\end{align*}
where the first map is standard, and is an quasi-isomorphism because $\omega$ is compact in $\dcat{E}$, by (3) above, and the second map is induced by the natural map $A\to \RHom_E(\omega,\omega)$ that is a quasi-isomorphism because of properties (1) and (2). Similarly, the unit map
\[
N \lra \RHom_A(\omega, N\lotimes_E \omega) 
\]
is a quasi-isomorphism for all $N$ in $\dcat A$ for it factors as the composition 
\[
N\longiso N\otimes_E \RHom_A(\omega,\omega) \longiso \RHom_A(\omega, N\otimes_E\omega)
\]
where the first map is induced by the isomorphism $E\iso \RHom_A(\omega,\omega)$, and the second one is standard, and is an isomorphism because $\omega$ is perfect in $\dcat A$.

This completes the proof that the stated adjoint pair of functors induce an equivalence on $\dcat A$. It remains to note that for each $M$ in $\dbcat A$ the $A$-complex $\RHom_A(\omega,M)$ and $\omega\lotimes_AM$ are in $\dbcat A$ as well, because $\omega$ is compact on both sides. Thus they restrict to adjoint equivalences on $\dbcat A$.
\end{proof}

We can now offer converses to \intref{Lemma}{lem:RGor}; see Goto~\cite{Goto:1982} for a similar statement in commutative algebra. Regarding condition (3), it is noteworthy that the injective dimension of $A$ need not be finite; so there need not be a global bound (independent of $M$) on the degree $i$ beyond which $\Ext^i_A(M,A)$ is zero. Indeed, there exist even commutative Gorenstein rings $R$ that exhibit this phenomenon; see \cite[A1]{Nagata:1975}.

\begin{theorem}
\label{thm:Gconverse}
Let $R$ be a commutative noetherian Gorenstein ring, and $A$ a finite, projective, $R$-algebra. The following conditions are equivalent.
\begin{enumerate}[\quad\rm(1)]
\item
The $R$-algebra $A$ is Gorenstein.
\item
The $A$-bimodule $\omega_{A/R}$ is perfect on both sides.
\item
  For each $M\in\mod A$ and
 $N\in\mod \op A$, we have
$\Ext^i_A(M,A)=0$ for $i\gg 0$ and  $\Ext^i_{\op A}(N,A)=0$  for $i\gg 0$.
\item
The functors $\RHom_A(-,A)$ and $\RHom_{\op A}(-,A)$ induce triangle equivalences 
\[
\begin{tikzcd}[column sep=large]
\op{\dbcat A} \arrow[rightarrow,yshift=1ex]{rr}{ \RHom_A(-, A)}
 	&& {\dbcat {\op A}}
        \arrow[rightarrow,yshift=-1ex]{ll}{\RHom_{\op A}(-,A) }[swap]{\sim}\,.
\end{tikzcd}
\]
\end{enumerate}
\end{theorem}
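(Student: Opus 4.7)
The plan is to establish the cycle $(1)\Rightarrow(2)\Rightarrow(3)\Rightarrow(1)$, and then handle $(3)\Leftrightarrow(4)$ separately. The implication $(1)\Rightarrow(2)$ is already contained in \intref{Theorem}{thm:Morita-D}. For $(2)\Rightarrow(3)$, I would exploit the bimodule isomorphism $A\cong\Hom_R(\omega_{A/R},R)$, valid because $A$ is finitely generated projective, hence reflexive, over $R$; tensor-hom adjunction then gives
\[
\RHom_A(M,A)\cong\RHom_R(\omega_{A/R}\lotimes_A M,R).
\]
For $M\in\mod A$, the complex $\omega_{A/R}\lotimes_A M$ lies in $\dbcat R$ since $\omega_{A/R}$ is perfect as a right $A$-module by (2) and $M$ is finitely generated over $R$. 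The hypothesis that $R$ is Gorenstein ensures that $\RHom_R(-,R)$ carries $\dbcat R$ into itself, and the asserted Ext-vanishing follows; a symmetric argument handles $\op A$-modules.

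For $(3)\Rightarrow(1)$, fix a maximal ideal $\fm$ of $R$ and let $J$ denote the intersection of the finitely many maximal two-sided ideals of $A$ lying over $\fm$, so that $JA_\fm$ is the Jacobson radical of the semilocal noetherian ring $A_\fm$. The $A$-module $A/J$ is finitely generated, so (3) yields $\Ext^i_A(A/J,A)=0$ for $i\gg 0$; localizing at $\fm$ gives $\Ext^i_{A_\fm}(A_\fm/JA_\fm,A_\fm)=0$ for $i\gg 0$. The standard extension of Bass' criterion to semilocal noetherian rings (which computes the injective dimension as the highest degree of nonvanishing $\Ext$ into the ring from the semisimple quotient $A/\mathrm{rad}$) then bounds the left injective dimension of $A_\fm$; the symmetric argument bounds the right injective dimension, so each $A_\fm$, and consequently each $A_\fp$, is Iwanaga-Gorenstein.

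Finally, $(4)\Rightarrow(3)$ is immediate, since $\RHom_A(M,A)\in\dbcat{\op A}$ forces Ext-vanishing in high degrees. For $(3)\Rightarrow(4)$, the functors $\RHom_A(-,A)$ and $\RHom_{\op A}(-,A)$ take values in the respective bounded derived categories by (3) together with the finite generation of $\Ext$. The biduality map $M\to\RHom_{\op A}(\RHom_A(M,A),A)$ is an isomorphism for $M=A$, hence for every perfect $M$; extending this isomorphism to arbitrary $M\in\dbcat A$ is the main technical obstacle. I would proceed by devissage and truncation on cohomological amplitude, leveraging the already-established equivalence $(1)\Leftrightarrow(3)$ to secure finite injective dimension of each $A_\fp$, and then using the derived equivalence of \intref{Theorem}{thm:Morita-D} to convert the biduality question into a self-duality statement about $\omega_{A/R}$ that can be verified locally on $\Spec R$. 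The delicate point is precisely this extension beyond the perfect locus: without a globally bounded injective dimension on $A$, one cannot invoke the usual commutative dualizing-complex machinery, and the argument must be assembled from local data and patched.
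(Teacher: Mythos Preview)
Your proposal is correct, and its logical organisation differs from the paper's in a useful way. The paper proves the equivalences in a hub-and-spoke pattern with (1) at the centre: it establishes $(1)\Leftrightarrow(2)$ and $(1)\Leftrightarrow(3)$ separately, then $(1)\Rightarrow(4)\Rightarrow(3)$. In particular, the paper never argues $(2)\Rightarrow(3)$ directly; instead it proves $(2)\Rightarrow(1)$ via a thick-subcategory argument (localise so that $R$ has finite injective dimension, use $A\cong\RHom_{\op A}(\omega,\omega)$ together with right-perfection of $\omega$ to place $A$ in $\Thick(\omega)$ inside $\dcat A$, and conclude finite injective dimension from that of $\omega$). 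Your direct $(2)\Rightarrow(3)$ via the adjunction $\RHom_A(M,A)\cong\RHom_R(\omega\lotimes_A M,R)$ is a clean alternative that sidesteps this thick-subcategory step. Both routes, however, rest on the same nontrivial commutative fact---that for a Gorenstein ring $R$ of possibly infinite Krull dimension, $\RHom_R(-,R)$ preserves $\dbcat R$---which is Goto's theorem and should be cited as such; the paper invokes it explicitly in its $(1)\Rightarrow(3)$.

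Your sketch of $(3)\Rightarrow(4)$ is more elaborate than needed. The paper simply observes (as you also note) that once $(1)$ is in hand, the biduality morphism can be checked locally on $\Spec R$, where $A_\fp$ has finite injective dimension and the statement is standard for Iwanaga--Gorenstein rings. There is no need to route through \intref{Theorem}{thm:Morita-D} or reformulate in terms of $\omega_{A/R}$. Your $(3)\Rightarrow(1)$ via the semisimple test module $A_\fm/\mathrm{rad}(A_\fm)$ is exactly the content of the paper's terse assertion that Ext-vanishing against all finitely generated modules forces finite injective dimension locally.
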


\begin{proof}
The proof that (1)$\Rightarrow$(2)  is contained in \intref{Lemma}{lem:RGor} and \intref{Theorem}{thm:Morita-D}. 

(2)$\Rightarrow$(1) The hypotheses are local with respect to primes in $\Spec R$, as is the conclusion, by definition. We may thus assume $R$ is local and hence of finite injective dimension. Then, since $A$ is a projective $R$-module, it follows from  adjunction that the $A$-module $\omega=\Hom_R(A,R)$ has finite injective dimension on both sides. For the same reason, one gets that the following natural map is a quasi-isomorphism
\[
 A \lra \RHom_{\op A}(\omega,\omega)\,;
\]
see the proof of \intref{Theorem}{thm:Morita-D}. As $\omega$ is perfect on the right, it is in $\Thick(A)$ in $\dcat{\op A}$, and  the quasi-isomorphism above implies that $A$ is in $\Thick(\omega)$ in $\dcat{A}$. In particular, since the injective dimension of $\omega$ as a left $A$-module is finite so is that of $A$. Similarly, we deduce that the injective dimension of $A$ is finite also on the right.

(1)$\Rightarrow$(3) Suppose $A$ is a Gorenstein $R$-algebra and fix an
$M$ in $\mod A$. Since $A$ is in $\Thick(\omega)$ in $\dbcat A$, it
suffices to verify that $\Ext^i_A(M,\omega)$ for $i\gg 0$.  Adjunction
yields
\[
\Ext^i_A(M,\omega) = \Ext^i_A(M,\Hom_R(A,R)) \cong \Ext^i_R(M,R)\,.
\]
As $R$ is Gorenstein, by \intref{Lemma}{lem:RGor}, the problem reduces
to the commutative case, where the result is due to
Goto~\cite[Theorem~1]{Goto:1982}. The same argument gives the result
for $N$ in $\mod\op A$.

(3)$\Rightarrow$(1) For each prime $\fp$ in $\Spec R$ and $M$ in $\mod
A$ we have an isomorphism
\[\Ext^i_A(M,A)_\fp \cong \Ext^i_{A_\fp}(M_\fp,A_\fp) \qquad (i\ge
  0)\,.\]
If this  vanishes for each $M$ and $i\gg 0$, then $A_\fp$ has finite
injective dimension as a left $A_\fp$-module. Analogously,  $A_\fp$ has finite
injective dimension as a right $A_\fp$-module. Thus $A$ is Gorenstein. 

(1)$\Rightarrow$(4) For each $M\in \dbcat A$, the $\op A$-complex
$\RHom_A(M,A)$ belongs to  $\dbcat{\op A}$, by the already verified implication (1)$\Rightarrow$(3), so it remains to verify that the natural biduality morphism
\[
M \lra \RHom_A(\RHom_A(M,A),A)
\]
is an isomorphism. Since $\RHom_A(M,A)$ is in $\dbcat{\op A}$ this can
be checked locally on $\Spec R$, where it holds for the injective
dimension of $A$ is locally finite. The same argument gives the result
for $N$ in $\dbcat{\op A}$.

(4)$\Rightarrow$(3) Clear.
\end{proof}

\begin{remark}
The argument in the proof of \intref{Theorem}{thm:Gconverse} raises the question: When $A$ is a Gorenstein $R$-algebra, is $\omega_{A/R}$ generated by $A$ in $\dbcat{\env A}$, that is to say, is it in $\Thick_{\env A}(A)$? By standard arguments, this question is equivalent to: Is 
\[
\RHom_R(A\lotimes_{\env A}A,R) \iso \RHom_{\env A}(A,\omega_{A/R}) 
\]
perfect as a dg module over $\mcE\colonequals \RHom_{\env A}(A,A)$, the (derived) Hochschild cohomology algebra? When this conditions holds it would follow from the isomorphism above that if $\mathrm{HH}^i(A/R)=0$ for $i\gg 0$, then also $\mathrm{HH}_i(A/R)=0$ for $i\gg 0$. 

This turns out not to be the  case when $A$ is finite dimensional and self-injective over a field: Let $k$ be a field,  $q\in k$ an element that is nonzero and not a root of unity, and set
\[
\Lambda\colonequals \frac{k\langle x,y\rangle}{(x^2,xy+qyx,y^2)}\,.
\]
 Then Buchweitz, Madsen, Green, and Solberg~\cite{Buchweitz/Green/Madsen/Solberg:2005} prove that $\rank_k \mathrm{HH}^*(A/k)=5$ whereas $\mathrm{HH}_i(A/k)$ is nonzero for each $i\ge 0$.
 
On the other hand the question has, trivially, a positive answer when $A$ is a symmetric $R$-algebra, that is to say, when $\omega_{A/R}\cong A$ as an $A$-bimodule. So this begs the question: If $\omega_{A/R}$ is in $\Thick_{\env A}(A)$, is then $A$ a symmetric $R$-algebra?

\end{remark}

\section{Gorenstein algebras and their homotopy categories}
\label{se:Gorenstein-hc}

Let $A$ be a Gorenstein $R$-algebra. We study in this case the properties
of the Nakayama functor for the homotopy category of injectives $\KInj A$.

\subsection*{The Nakayama functor}
As explained in \intref{Section}{se:Nakayam-I}, the Nakayama functor admits a canonical extension to a functor
$\wh\kan_{A/R} \colon \KInj A\to \KInj A$. The following result
discusses the compatiblity of this functor with the recollement for
$\KInj A$ introduced in \eqref{eq:recollement} and the equivalence on
$\dcat A$ in \intref{Theorem}{thm:Morita-D}.

\begin{theorem}
\label{thm:Morita-K}
Let $A$ be a Gorenstein $R$-algebra. The functor
$\wh\kan_{A/R} \colon \KInj A\to \KInj A$ is a triangle equivalence
making the following square commutative:
\[
\begin{tikzcd}
\mathbf{D}(\Mod A)   \arrow[rightarrow]{d}{\RHom_A(\omega,-)}
\arrow[tail]{r}{\ires} &
\KInj A \arrow[rightarrow]{d}{\wh\kan} \\
\dcat A    \arrow[tail]{r}{\ires} & \KInj A 
\end{tikzcd}
\]
Moreover $\wh\kan_{A/R}$ restricts to
an equivalence $\KacInj A\iso \KacInj A$.
\end{theorem}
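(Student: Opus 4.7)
The plan is to bootstrap from the behaviour of $\wh\kan_{A/R}$ on compact objects to the full homotopy category, using that $\KInj A$ is compactly generated and \intref{Theorem}{thm:Morita-D}. The argument decomposes into four pieces: describe $\wh\kan_{A/R}$ on $\KInjc A$, establish commutativity of the square, promote this to an equivalence on $\KInj A$, and finally restrict to the acyclic subcategory. First, by \intref{Lemma}{lem:cNak} the restriction of $\wh\kan_{A/R}$ to compacts agrees with $\RHom_A(\omega_{A/R},-)\colon \dbcat A\to \dcat A$, and by \intref{Theorem}{thm:Morita-D} this restricts further to an auto-equivalence of $\dbcat A$.

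The crux of the argument is the commutativity of the square for the whole of $\dcat A$. I would argue that both $\wh\kan_{A/R}\comp \ires$ and $\ires\comp \RHom_A(\omega_{A/R},-)$ are triangle functors $\dcat A\to \KInj A$ that preserve arbitrary coproducts and agree on the compact generator $A$. For agreement on $A$, note that $\ires A\in \KInjc A$, so \intref{Lemma}{lem:cNak} gives $\wh\kan_{A/R}(\ires A)\cong \ires\RHom_A(\omega_{A/R},A)$. For coproduct preservation, $\wh\kan_{A/R}$ preserves coproducts by \intref{Lemma}{lem:cNak}, and $\RHom_A(\omega_{A/R},-)$ does so since it is an equivalence on $\dcat A$; the delicate ingredient is that $\ires\colon \dcat A\to \KInj A$ preserves arbitrary coproducts, which follows from the noetherian hypothesis on $A$ as explained in \cite{Krause:2005}. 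Two coproduct-preserving triangle functors between compactly generated categories agreeing on a compact generator are naturally isomorphic, which gives the commutative square.

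With the square in hand, $\wh\kan_{A/R}$ is a coproduct-preserving triangle endofunctor of the compactly generated triangulated category $\KInj A$ restricting to an equivalence on compact objects; by the standard propagation argument (the localising subcategories where full faithfulness and essential surjectivity hold contain the compact generator) it is therefore a global equivalence. For the restriction to $\KacInj A = \ker \bfq$, I would use the adjunction $\bfq\dashv \ires$: for $X\in \KacInj A$ and $Y\in \dcat A$, the commutative square identifies $\wh\kan_{A/R}^{-1}(\ires Y)$ with $\ires(\omega_{A/R}\lotimes_A Y)$, so
\[
\Hom_{\KInj A}(\wh\kan_{A/R}X,\ires Y)\cong \Hom_{\KInj A}(X,\ires(\omega_{A/R}\lotimes_A Y))=0,
\]
the vanishing because the target is K-injective and $X$ is acyclic. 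Hence $\wh\kan_{A/R}X\in \KacInj A$, and the same reasoning applied to the quasi-inverse gives the restricted equivalence. The main obstacle is promoting the commutative square from $\mathbf{D}^{+}(\Mod A)$, where \intref{Lemma}{lem:cNak} applies directly, to all of $\dcat A$; this is where the full strength of \intref{Theorem}{thm:Morita-D} (making $\RHom_A(\omega_{A/R},-)$ an equivalence, and hence coproduct-preserving on the whole of $\dcat A$) is essential.
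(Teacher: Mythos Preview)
Your argument for the commutativity of the square has a gap. The principle you invoke---that two coproduct-preserving triangle functors agreeing on a compact generator are naturally isomorphic---is not valid as stated: the standard d\'evissage requires a \emph{natural transformation} between the functors, defined on the whole source category, that is an isomorphism on compacts. \intref{Lemma}{lem:cNak} provides such a transformation only on $\mathbf{D}^{+}(\Mod A)$, and you have not explained how to extend it. There is a more basic issue with the ingredients as well: $\ires\colon \dcat A\to\KInj A$ is the \emph{right} adjoint to $\bfq$, so it certainly preserves products and homotopy limits, but there is no reason to expect it to preserve arbitrary coproducts---a direct sum of K-injective complexes of injectives need not itself be K-injective. (The coproduct-preserving embedding of $\dcat A$ into $\KInj A$ is the left adjoint $\ipres$, whose image is $\Loc(\ires A)$, not $\kinj A$.) Without coproduct preservation of $\ires$, the composite $\ires\circ\RHom_A(\omega,-)$ need not preserve coproducts either, and the d\'evissage cannot get off the ground.

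The paper takes a different route that avoids both problems. It first establishes, via the concrete identification $\wh\kan\cong\Hom_A(\wh\omega_{A/R},-)$ of \intref{Lemma}{lem:cNak+} (which you do not invoke at all), that $\wh\kan$ is a triangle equivalence. Then for the square it argues via homotopy \emph{limits}: every $X\in\dcat A$ is the homotopy limit of its truncations $\tau^{\ge -n}X\in\mathbf{D}^{+}(\Mod A)$, on which \intref{Lemma}{lem:cNak} already gives the natural isomorphism, and both $\wh\kan\circ\ires$ and $\ires\circ\RHom_A(\omega,-)$ preserve homotopy limits since each factor is a right adjoint or an equivalence. Your argument for the restriction to $\KacInj A$, using the commutative square together with the adjunction $(\bfq,\ires)$ to characterise acyclics as those objects orthogonal to K-injectives, is correct and is genuinely different from the paper's, which instead appeals once more to \intref{Lemma}{lem:cNak+} to see directly that $\Hom_A(\wh\omega,-)$ preserves and reflects acyclicity.
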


The key step in the proof of the result is a ``concrete" description
of $\wh\kan$; see \intref{Lemma}{lem:cNak+} below. To that end note
that \intref{Lemma}{lem:biperfect} applies to the dualising
bimodule $\omega_{A/R}$; fix a complex $P$ provided by that result and
set $\wh\omega_{A/R}\colonequals P$. Thus
\[
\wh\omega_{A/R} \lra \omega_{A/R}
\]
is a finite resolution of $\omega_{A/R}$ by finitely generated $A$-bimodules that are projective on either side. This implies, in particular,  that
when $X$ is a complex of injective $A$-modules, so is $\Hom_A(\wh\omega_{A/R},X)$; this follows from the standard Hom-tensor adjunction, and requires only that $\wh\omega_{A/R}$ consists of modules projective on the right. One thus has the induced exact functor
\[
 \Hom_A(\wh\omega_{A/R},-)\colon \KInj A\to \KInj A\,. 
\]
Here is the vouched for description of the completion of the Nakayama functor.

\begin{lemma}
\label{lem:cNak+}
The quasi-isomorphism $\wh\omega_{A/R}\to\omega_{A/R}$ induces  an isomorphism 
\[
\wh\kan_{A/R} \iso \Hom_A(\wh\omega_{A/R},-)
\]
of functors  on $\KInj A$.
\end{lemma}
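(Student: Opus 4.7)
My plan is to build a natural morphism
\[
\eta_X \colon \wh\kan_{A/R}(X) \longrightarrow \Hom_A(\wh\omega_{A/R},X)
\]
in $\KInj A$ and then verify that it is an isomorphism on a compact generating set, invoking compact generation to conclude for all $X$. First I construct $\eta_X$: since $\wh\omega_{A/R}$ consists of bimodules that are projective as right $A$-modules, the complex $\Hom_A(\wh\omega_{A/R},X)$ lies in $\KInj A$, and the quasi-isomorphism $\wh\omega_{A/R}\to\omega_{A/R}$ induces a morphism $\Hom_A(\omega_{A/R},X)\to\Hom_A(\wh\omega_{A/R},X)$ in $\KMod A$. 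Applying the adjunction between $\lambda$ and the inclusion $\KInj A\hookrightarrow\KMod A$, this morphism factors canonically through $\lambda\Hom_A(\omega_{A/R},X)=\wh\kan_{A/R}(X)$ to yield $\eta_X$ in $\KInj A$.

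Next I would observe that both $\wh\kan_{A/R}$ and $\Hom_A(\wh\omega_{A/R},-)$ are triangulated endofunctors of $\KInj A$ preserving arbitrary direct sums: the first preserves direct sums by \intref{Lemma}{lem:cNak}, while the second does so because $\wh\omega_{A/R}$ is bounded and degreewise finitely generated. Consequently the full subcategory of $\KInj A$ on which $\eta_X$ is an isomorphism is a localising subcategory of $\KInj A$, so it suffices to check the isomorphism on compacts.

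For compact $X$, the equivalence $\KInjc A\iso\dbcat A$ via $\bfq$ lets us assume $X$ lies in $\bfK^+(\Inj A)$. The Claim in the proof of \intref{Lemma}{lem:cNak} then identifies $\wh\kan_{A/R}(X)$ with $\ires\Hom_A(\omega_{A/R},X)$. On the other hand, the mapping cone of $\wh\omega_{A/R}\to\omega_{A/R}$ is a bounded acyclic complex of $A$-bimodules, so applying $\Hom_A(-,X)$ with $X$ bounded below and consisting of injectives (hence K-injective) yields an acyclic complex; this shows that the induced map $\Hom_A(\omega_{A/R},X)\to\Hom_A(\wh\omega_{A/R},X)$ is a quasi-isomorphism. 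Since $\wh\omega_{A/R}$ is bounded, $\Hom_A(\wh\omega_{A/R},X)$ is bounded below and thus K-injective, so it serves as the K-injective resolution of $\Hom_A(\omega_{A/R},X)$ and $\eta_X$ is an isomorphism.

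The only mildly subtle point is verifying that $\eta_X$ is genuinely the comparison map arising from the adjunction in a form that lets us identify it, under the Claim of \intref{Lemma}{lem:cNak}, with the quasi-isomorphism above. This is a bookkeeping matter, not a conceptual obstacle: the universal property of $\lambda$ pins down $\eta_X$ uniquely up to unique isomorphism, and both descriptions agree by construction. With that in place, compact generation of $\KInj A$ delivers the isomorphism of functors on all of $\KInj A$.
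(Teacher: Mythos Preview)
Your proposal is correct and follows essentially the same approach as the paper: construct the natural transformation via the adjunction $(\lambda,\mathrm{incl})$, observe that both functors preserve coproducts, reduce to compacts, and there use \intref{Lemma}{lem:cNak} together with K-injectivity of source and target to conclude. Your treatment is in fact slightly more explicit than the paper's in justifying why $\Hom_A(\omega,X)\to\Hom_A(\wh\omega,X)$ is a quasi-isomorphism for bounded-below $X$ and why the target is K-injective.
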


\begin{proof}
For  $X\in \KInj A$ the morphism $\wh\omega \to\omega$ induces the morphism 
\[
\Hom_A(\omega,X) \lra  \Hom_A(\wh\omega,X)
\]
of complexes of $A$-modules. Since $\Hom_A(\wh\omega,X)$ consists of injective modules, one gets an induced morphism
\[
\wh\kan(X) = \lambda \Hom_A(\omega,X) \lra \Hom_A(\wh\omega,X)\,.
\]
This is the natural transformation in question. The functors $\wh\kan$ and $\Hom_A(\wh\omega,-)$ preserve arbitrary direct sums, the former by \intref{Lemma}{lem:cNak} and the latter because $\wh\omega$ is a bounded complex of finitely generated modules, by choice. Thus  it suffices to verify that the morphism above is an isomorphism when $X$ is compact in $\KInj A$, that is to say, when it is of the form $\ires M$, for some $M\in\dbcat A$. In this case the morphism in question is the composite
\[
\wh\kan(\ires M) \iso \ires \Hom_A(\omega,\ires M) \to \Hom_A(\wh\omega,\ires M)\,,
\]
where the isomorphism is taken from \intref{Lemma}{lem:cNak}. The map above is a quasi-isomorphism and its source and target are K-injective; the former by construction and the latter because $\wh\omega$ is a bounded complex of projectives. It remains to observe that a quasi-isomorphism between K-injectives is an isomorphism in $\KInj A$.
\end{proof}

\begin{proof}[Proof of \intref{Theorem}{thm:Morita-K}]
Given \intref{Lemma}{lem:cNak+}, a standard d\'evissage argument shows that $\wh\kan$ is a triangle equivalence: the functor preserves arbitrary direct sums and identifies with $\RHom_A(\omega, -)$ when restricted to compacts, by \intref{Lemma}{lem:cNak}. It remains to note that $\RHom_A(\omega,-)$ is an equivalence on $\dbcat A$, by \intref{Theorem}{thm:Morita-D}. 

For the commutativity of the square, fix a complex $X\in\dcat A$. We have already seen in \intref{Lemma}{lem:cNak} that
\[
\wh\kan(\ires X) \iso \ires \RHom_A(\omega, X)
\]
when $X$ is bounded below. An arbitrary complex in $\dcat A$ is quasi-isomorphic to a homotopy limit of bounded below complexes. Thus it remains to observe that both functors preserve  homotopy limits. 

It remains to verify that $\wh\kan$ restricts to an equivalence between acyclic complexes; equivalently that a complex $X\in\KInj A$ is acyclic if and only if $\wh\kan(X)$ is acyclic.  

Since $\wh\omega$ is perfect on the left, $\wh\kan$ preserves acyclic complexes. On the other hand, since $\RHom_A(\omega, A)$ is in $\Thick(A)$ in $\dbcat A$ by
\intref{Lemma}{lem:biperfect}, it follows that $\wh\kan(\ires A)$ is in
$\Thick(\ires A)$. Using the isomorphism
\[
H^n(X)\cong\Hom_{\bfK}(\ires A,\Sigma^n X)\cong \Hom_{\bfK}(\wh\kan(\ires A), \Sigma^n\wh\kan(X))
\]
it follows that when $\wh\kan(X)$ is acyclic so is $X$. 
\end{proof}

\begin{remark}
  One may turn $\dbcat A$ into a dg category such that $\KInj A$
  identifies with its derived category; see
  \cite[Appendix~A]{Krause:2005}. Then $\wh\kan_{A/R}$ identifies with
  the lift of the Nakayama functor $\dbcat A\to\dbcat A$.
\end{remark}

\begin{remark}
\label{rem:Morita-KP}
If $X$ is a complex of projective $A$-modules, then so is the $A$-complex $\wh\omega_{A/R}\otimes_AX$; this is because $\wh\omega$ consists of modules projective on the left. Thus one gets an exact functor
\[
\wh\omega_{A/R}\otimes_A - \colon \KProj A\lra \KProj A\,.
\]
Arguing as in the proof of \intref{Theorem}{thm:Morita-K} one can verify that this is also an equivalence of categories. 
\end{remark}

Since the Nakayama functor $\wh\kan_{A/R}$ is an equivalence, it has a quasi-inverse. This is described below.

\subsection*{A quasi-inverse}
Set $V\colonequals \Hom_A(\wh\omega_{A/R},A)$; this is a bounded complex of $A$-bimodules where the left action is through the right $A$-module structure on $\wh\omega_{A/R}$ and the right action is through the right $A$-module structure of $A$.

\begin{proposition}
\label{pr:quasiinverse}
The assignment $X\mapsto \Hom_A(V,X)$ induces an exact functor 
\[
\Hom_A(V,-)\colon \KInj A\lra \KInj A\,.
\]
This functor is a quasi-inverse of $\wh\kan_{A/R}$, and so an equivalence of categories.
\end{proposition}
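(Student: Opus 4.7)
The strategy is to realize $\Hom_A(V,-)$ as a right adjoint of the triangle equivalence $\wh\kan_{A/R}\colon \KInj A \to \KInj A$ furnished by \intref{Theorem}{thm:Morita-K}; since a right adjoint to an equivalence is automatically a quasi-inverse (by the uniqueness of adjoints), this settles the proposition. The three ingredients are the presentation $\wh\kan_{A/R} \iso \Hom_A(\wh\omega_{A/R},-)$ from \intref{Lemma}{lem:cNak+}, an evaluation isomorphism that rewrites this functor as $V\otimes_A -$, and the standard Hom-tensor adjunction at the cochain level.

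The first step is to verify that $\Hom_A(V,-)$ is a coproduct-preserving exact endofunctor of $\KInj A$. By the choice of $\wh\omega_{A/R}$ in \intref{Lemma}{lem:biperfect} each term $\wh\omega_{A/R}^{-n}$ is finitely generated and projective as a left $A$-module, so each $V^n = \Hom_A(\wh\omega_{A/R}^{-n},A)$ is finitely generated and projective as a right $A$-module. Standard Hom-tensor adjunction then implies that $\Hom_A(V^n,-)$ sends injective left $A$-modules to injective left $A$-modules; as $V$ is bounded, the total $\Hom$ complex $\Hom_A(V,X)$ still lies in $\KInj A$ when $X$ does. Finite generation of each $V^n$ yields commutation with arbitrary direct sums, and exactness on the homotopy category is automatic.

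Next I would establish the adjunction. For each pair $(n,m)$ the evaluation map
\[
\Hom_A(\wh\omega_{A/R}^{-n},A)\otimes_A Y^m \lra \Hom_A(\wh\omega_{A/R}^{-n},Y^m)
\]
is an isomorphism of left $A$-modules since $\wh\omega_{A/R}^{-n}$ is finitely generated projective; assembling these yields a natural isomorphism of complexes $V\otimes_A Y \iso \Hom_A(\wh\omega_{A/R},Y)$, which in combination with \intref{Lemma}{lem:cNak+} provides a natural isomorphism $\wh\kan_{A/R}(Y) \iso V\otimes_A Y$ in $\KInj A$. The standard bimodule Hom-tensor adjunction produces a natural isomorphism $\Hom_A(V\otimes_A Y,X) \iso \Hom_A(Y,\Hom_A(V,X))$ of complexes of abelian groups, and passing to $H^0$ gives
\[
\KHom(\wh\kan_{A/R}(Y),X) \iso \KHom(Y,\Hom_A(V,X))
\]
for $X,Y\in\KInj A$. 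This exhibits $\Hom_A(V,-)$ as a right adjoint to $\wh\kan_{A/R}$, which together with \intref{Theorem}{thm:Morita-K} completes the proof. The delicate point I expect is the bookkeeping of the bimodule structures on $V$---with its left $A$-action induced by the right action on $\wh\omega_{A/R}$, and its right action by that on $A$---so that the evaluation and Hom-tensor isomorphisms are genuinely isomorphisms of left $A$-complexes rather than merely of abelian groups. Everything else is a routine assembly of standard isomorphisms.
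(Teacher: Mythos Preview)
Your proposal is correct and tracks the paper's argument closely: both establish that $V$ consists of bimodules projective on the right (so $\Hom_A(V,-)$ lands in $\KInj A$), both use the evaluation isomorphism $V\otimes_A Y\iso\Hom_A(\wh\omega_{A/R},Y)$ together with \intref{Lemma}{lem:cNak+}, and both invoke the Hom--tensor adjunction. The one genuine difference is in the endgame. The paper constructs the unit $\eta(X)\colon X\to\Hom_A(V,\wh\kan(X))$ explicitly and verifies it is an isomorphism by a d\'evissage argument on compacts, ultimately reducing to the homothety map $A\to\RHom_A(\RHom_A(\omega,A),\RHom_A(\omega,A))$ and appealing to \intref{Theorem}{thm:Morita-D} and \intref{Lemma}{lem:biperfect}. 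You instead observe that the adjunction already exhibits $\Hom_A(V,-)$ as a right adjoint to $\wh\kan_{A/R}$, and since the latter is an equivalence by \intref{Theorem}{thm:Morita-K}, any adjoint is automatically a quasi-inverse. Your route is shorter and avoids the reduction to compacts; the paper's route has the minor advantage of making the unit map explicit, but both are complete.
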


\begin{proof}
The complex $\wh\omega$ consists of modules projective on the left and the right $A$-action on  $V=\Hom_A(\wh \omega,A)$ is through $A$, so $V$ consists of modules that are projective on the right. Given this it is easy to verify that $\Hom_A(V,-)$ maps complexes of injectives to complexes of injectives so induces an exact functor on $\KInj A$. For $X\in \KInj A$ the natural morphism of complexes
\[
V\otimes_A X = \Hom_A(\wh \omega, A)\otimes_A X \lra \Hom_A(\wh\omega,X)
\]
is an isomorphism because the complex $\wh\omega$ is a bounded complex of modules projective on the left. This justifies the second  isomorphism below:
\begin{align*}
\KHom(X,\Hom_A(V, \Hom_A(\wh\omega,X))) 
&\cong \KHom(V\otimes_AX, \Hom_A(\wh\omega,X)) \\
&\cong \KHom(\Hom_A(\wh\omega,X),\Hom_A(\wh\omega,X))\,.
\end{align*}
The first one is adjunction. Thus the identity  on $\Hom_A(\wh\omega,X)$ induces a morphism
\[
\eta(X)\colon X\lra \Hom_A(V, \Hom_A(\wh\omega,X))
\]
which is natural in $X$. As functors of $X$, both the source and the target of $\eta$ are exact and preserves direct sums; thus, to verify that $\eta(X)$ is an isomorphism for each $X$ it suffices to verify that this is so for compact objects in $\KInj A$, that is to say, for the induced natural transformation on $\dbcat A$. This is the map
\[
M\mapsto \RHom_A(\RHom_A(\omega,A),\RHom_A(\omega,M))\,.
\]
Since $\omega$ and $\RHom_A(\omega,A)$ are perfect as complexes of left $A$-modules, by \intref{Theorem}{thm:Morita-D} and \intref{Lemma}{lem:biperfect}, respectively, the map above can be obtain by applying $(-)\lotimes_AM$ to the natural homothety morphism 
 \[
A\lra \RHom_A(\RHom_A(\omega,A),\RHom_A(\omega,A))\,.
 \]
Observe this a morphism in $\dbcat{\env A}$. It remains to note that the map above is a quasi-isomorphism by, for example, \intref{Theorem}{thm:Morita-D}.
\end{proof}

\subsection*{Acyclicity versus total acyclicity}

Set $E\colonequals \ires_{\env A}A$, the injective resolution of $A$ as an $A$-bimodule, and consider adjoint functors
\[
\begin{tikzcd}
\KProj A \arrow[hookrightarrow,yshift=.75ex]{rr}
 	&&\KFlat A \arrow[twoheadrightarrow,yshift=-.75ex]{ll}{\bff } \arrow[rightarrow,yshift=.75ex]{rr}{E\otimes_A-}
		 &&\KInj  A \arrow[rightarrow,yshift=-.75ex]{ll}{\Hom_A(E,-) }
\end{tikzcd}
\]
where $\bff$ is the right adjoint to the inclusion. It exists because $\KProj A$ is a compactly generated triangulated category and its inclusion in $\KFlat A$ is compatible with coproducts; see \cite[Proposition~2.4]{Iyengar/Krause:2006}. One thus gets an adjoint pair
\[
\begin{tikzcd}
\KProj A \arrow[rightarrow,yshift=.75ex]{rr}{\bft}
		 &&\KInj  A \arrow[rightarrow,yshift=-.75ex]{ll}{\bfh}
\end{tikzcd}
\]
where  $\bft\colonequals E\otimes_A-$ and $\bfh\colonequals \bff\circ\Hom_A(E,-)$.

Let $\cat A$ be an additive category. A complex $X\in\bfK(\cat A)$ is called \emph{totally acyclic} if $\Hom(W,X)$ and $\Hom(X,W)$ are
acyclic complexes of abelian groups for all $W\in\cat A$. We denote by $\bfK_{\mathrm{tac}}(\cat A)$ the full subcategory of totally acyclic complexes.

\begin{theorem}
\label{thm:documenta}
Let $A$ be a Gorenstein $R$-algebra.  The adjoint functors
$(\bft,\bfh)$ above are equivalences of categories, and they restrict
to equivalences
\[
\begin{tikzcd}
\KacProj A  \arrow[rightarrow,yshift=1ex]{rr}{\bft}
 	&& \KacInj A\,. \arrow[rightarrow,yshift=-1ex]{ll}{\bfh}[swap]{\sim}
\end{tikzcd}
\]
Moreover, there are equalities
\[
 \bfK_{\mathrm{tac}}(\Prj A)=\KacProj A  \quad\text{and}\quad \bfK_{\mathrm{tac}}(\Inj A) = \KacInj A\,.
\]
\end{theorem}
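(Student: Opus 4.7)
The plan is to establish the three assertions in sequence.

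\emph{Equivalence and its restriction to acyclics.} Both $\KProj A$ and $\KInj A$ are compactly generated, with compact objects identified in \intref{Section}{se:hc} as $\KProjc A\simeq\op{\dbcat{\op A}}$ (via $N\mapsto\Hom_A(\pres N,A)$) and $\KInjc A\simeq\dbcat A$ (via $\bfq$). The functor $\bft$ preserves coproducts. For $N\in\dbcat{\op A}$ with bounded-above K-projective resolution $\pres N$ by finitely generated projectives, the complex $\Hom_A(\pres N,A)$ is bounded below and degreewise projective, hence K-flat, so
\[
E\otimes_A\Hom_A(\pres N,A)\simeq \Hom_A(\pres N,A)\simeq \RHom_{\op A}(N,A)\quad\text{in }\dcat A,
\]
using $A\iso E$ in $\dcat{\env A}$. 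By \intref{Theorem}{thm:Gconverse}, $\RHom_{\op A}(-,A)\colon\op{\dbcat{\op A}}\iso\dbcat A$ is an equivalence, so $\bft$ is an equivalence on compacts. The standard argument (coproduct-preserving, fully faithful on compacts, essentially surjective on compact generators) then promotes $\bft$ to a triangulated equivalence with $\bfh$ a quasi-inverse. The same computation shows that the natural transformation $\bfq\to\bfq\bft$ of functors $\KProj A\to\dcat A$, induced by $A\to E$, is an isomorphism on compacts, and hence everywhere since both functors preserve coproducts. Thus $\bft$ identifies $\Ker\bfq=\KacProj A$ with $\Ker\bfq=\KacInj A$.

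\emph{Total acyclicity on the projective side.} The inclusion $\bfK_{\mathrm{tac}}(\Prj A)\subseteq\KacProj A$ is immediate. For the reverse, let $X\in\KacProj A$. For projective $W$, $\Hom_A(W,X)$ is acyclic as $\Hom_A(W,-)$ is exact; and since any projective is a summand of some $A^I$ and products preserve acyclicity of $R$-complexes, $\Hom_A(X,W)$ being acyclic reduces to the case $W=A$. Via the equivalence,
\[
H^n\Hom_A(X,A)=\KHom(X,A[n])\cong\KHom(\bft X,E[n])\,.
\]
Because $\env A$ is projective over $A$ on the right, each $E^i$ is injective as a left $A$-module, so $E$ is K-injective as a left $A$-complex. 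Hence $\KHom(\bft X,E[n])=\Hom_{\dcat A}(\bft X,A[n])$, which vanishes since $\bft X\simeq X\simeq 0$ in $\dcat A$.

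\emph{Total acyclicity on the injective side.} This is the hard part; the argument does not dualise cleanly, since for an injective module $W$ there is no obvious reason $\bfh(W[0])$ should be K-projective in $\KProj A$. My plan is to reduce to the Iwanaga-Gorenstein situation at each prime of $R$. Given $X\in\KacInj A$ and $W$ injective, $\Hom_A(X,W)$ is automatically acyclic since $\Hom_A(-,W)$ is exact; and decomposing $W$ into indecomposable injectives, using that $\Hom_A(-,X)$ converts sums to products which preserve acyclicity of $R$-complexes, one may assume $W$ is indecomposable. By the standard structure theory for injectives over a finite $R$-algebra, such a $W$ is $R_\fp$-local for a unique prime $\fp\in\Spec R$, giving
\[
\Hom_A(W,X)\cong \Hom_{A_\fp}(W,X_\fp),
\]
with $X_\fp\in\KacInj A_\fp$ since localisation is exact and preserves injectivity over Noetherian rings. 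As $A_\fp$ is Iwanaga-Gorenstein, by Bass the injective $A_\fp$-module $W$ has finite projective dimension, say $d$. A standard dimension-shifting argument along the acyclic short exact sequences $0\to Z^{n-1}\to X^{n-1}_\fp\to Z^n\to 0$ of syzygies of $X_\fp$, using at each step the injectivity of $X^i_\fp$ to kill $\Ext^{\ge 1}_{A_\fp}(W,X^i_\fp)$, identifies
\[
H^n\Hom_{A_\fp}(W,X_\fp)\cong \Ext^{k+1}_{A_\fp}(W,Z^{n-k-1}(X_\fp))\quad\text{for all }k\ge 0,
\]
which vanishes once $k\ge d$, completing the proof.
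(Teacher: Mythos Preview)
Your treatment of the equivalence and of total acyclicity on the projective side follows the paper's line. One minor slip: projectives are summands of coproducts $A^{(I)}$, not of products $A^I$, and $\Hom_A(X,-)$ does not convert coproducts to products, so the reduction to $W=A$ does not go through as written. The fix is immediate and is exactly what the paper does: for any projective $P$ the complex $\bft P=E\otimes_AP$ is a bounded-below complex of injectives, hence K-injective, whence $\KHom(X,P)\cong\KHom(\bft X,\bft P)=0$ directly, with no reduction needed.

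The genuine gap is on the injective side. The isomorphism $\Hom_A(W,X)\cong\Hom_{A_\fp}(W,X_\fp)$ for $W=I(\fq)$ lying over $\fp$ is false: being $\fp$-local controls maps \emph{into} $W$, not maps out of it. Already over $R=\mathbb Z$ with $\fp=(0)$ one has $\Hom_{\mathbb Z}(\mathbb Q,\mathbb Q_p/\mathbb Z_p)\ne 0$ while $(\mathbb Q_p/\mathbb Z_p)_{(0)}=0$. In your setting the degreewise splitting $X\cong\gam_{Z(\fp)}X\oplus X_\fp$ of a complex of injectives shows that the discrepancy between the two sides is exactly $\Hom_A(W,\gam_{Z(\fp)}X)$; knowing this is acyclic is the very statement you are trying to prove, now for the acyclic complex $\gam_{Z(\fp)}X$, so the reduction is circular. (Using that $I(\fq)$ is also $\fp$-torsion lets you replace $X$ by $\gam_{V(\fp)}X$, but that complex still contains indecomposable injectives lying over primes strictly above $\fp$ and is therefore not an $A_\fp$-complex either.) The paper sidesteps localisation entirely: it proves $\Inj A\subseteq\Loc(E)$ in $\KInj A$ by observing that for each injective module $I$ the complex $\Hom_A(E,I)$ is bounded below, flat, and quasi-isomorphic to $I$, so $\bfh I$ is a K-projective resolution of $I$ and hence lies in $\Loc(A)$; transporting back along the equivalence $\bft$ gives $I\in\Loc(E)$. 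Since $\KHom(E,\Sigma^nY)\cong H^n(Y)=0$ for $Y$ acyclic by \intref{Lemma}{lem:AE}, the vanishing propagates to all of $\Loc(E)$, in particular to every injective module.
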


\begin{proof}
  It is clear that the functor $\bft$ preserves direct sums. It also
  preserves compact objects, as we explain now. We may assume that a compact object in
  $\KProj A$ is of the form $\Hom_A(\pres M,A)$ for some
  $M\in\mod\op A$. This yields a complex
\[E\otimes_A \Hom_{\op A}(\pres M,A)\cong \Hom_{\op A}(\pres M, E)\]
which is compact in $\KInj  A$, because it is bounded below 
with \[H^i \Hom_{\op A}(\pres M, E)\cong\Ext_{\op A}^i(M,A)=0\] for $i\gg 0$, by \intref{Theorem}{thm:Gconverse}.
In fact, the functor $\bft$ restricted to compacts identifies with
\[
\RHom_{\op A}(-,A)\colon \dbcat{\op A}\lra \op{\dbcat A}
\]
and this is an equivalence, again by
\intref{Theorem}{thm:Gconverse}. Thus $\bft$ is an equivalence of
categories. Moreover, since $\bfh$ is its adjoint, the latter is the
quasi-inverse to $\bft$.

For  $X\in \KProj A$  the equivalence of categories  and \intref{Lemma}{lem:AE} yield
\[
H^n(X) = \KHom(A,\Sigma^n X) \cong \KHom(E, \Sigma^n \bft X)  = H^n(\bft X)
\]
for each integer $n$. Thus $X$ is in $\KacProj A$ if and only $\bft X$ is in $\KacInj A$. Therefore $(\bft,\bfh)$ induce an equivalence on the subcategory of acyclic complexes.

The key to verifying the remaining assertions is the following

\begin{claim}
$\Inj A\subset \Loc(E)$, in $\KInj A$. 

\smallskip

Indeed, given the already established equivalence, it suffices to
verify that $\bfh I$ is in $\Loc(A)$ for any injective $A$-module $I$,
since $\bfh$ identifies $E$ with $A$. As $E$ is a bounded below
complex of injective modules, $\Hom_A(E,I)$ is a bounded above complex
of flat modules, and it is quasi-isomorphic to $I$, by
\intref{Lemma}{lem:AE}. Therefore $\bfh I = \bff\Hom_A(E,I)$ is a
projective resolution of $I$; see
\cite[Theorem~2.7(2)]{Iyengar/Krause:2006}. Thus $\bfh I$ is in
$\Loc(A)$, as desired.
\end{claim}

Fix $Y\in\KacInj  A$. Then $\KHom(E, \Sigma^n  Y)=0$ for each integer $n$, so the claim yields  $\KHom(I,\Sigma^n Y)=0$ for $I\in \Inj A$ and integers $n$, that is to say, $Y$ is totally acyclic. Thus any acyclic complex of injective modules is totally acyclic.

Fix an acyclic complex $X$ in $\KProj A$. We want to verify that $X$
is totally acyclic, that is to say, $\KHom(X,-)=0$ on $\Add A$. Since
$\bft$ is an equivalence of categories, it suffices to verify that
$\KHom(\bft X,-)=0$ on $\Add \bft A$, that is to say, on $\Add
E$. However, $\bft X$ is also acyclic, by the already established part
of the result, and any complex in $\Add E$ is bounded below, and hence
K-injective. This implies the desired result.
\end{proof}

\section{Gorenstein projective modules}
\label{se:GP}
Let $A$ be a Gorenstein $R$-algebra.  An $A$-module $M$ is \emph{Gorenstein projective} (abbreviated to G-projective) if $M$ is a syzygy in a totally acyclic complex of projective modules, that is, $M\cong \Coker(d_X^{-1})$ for some $X$ in
$\bfK_{\mathrm{tac}}(\Prj A)$. Given \intref{Theorem}{thm:documenta},
one can drop ``totally" from the definition. We write $\GProj A$ for
the full subcategory of $\Mod A$ consisting of G-projectives, and
$\Gproj A$ for $\GProj A\cap \mod A$.

Starting from \intref{Theorem}{thm:documenta}, and also the results below, one can develop the theory of G-projective modules along the lines in \cite{Buchweitz:1986} but we shall be content with recording a few observations needed to prove the duality theorems in \intref{Section}{se:GD}.  All these are well-known when $A$ is Iwanaga-Gorenstein. 

\begin{lemma}
\label{lem:Gproj-props}
Let  $M$ be a G-projective  $A$-module. The following statements hold.
\begin{enumerate}[\quad\rm(1)]
\item
$M_\fp$ is G-projective as an $A_\fp$-module  for $\fp\in\Spec R$.
\item
$\Tor^A_i(\omega_{A/R},M)=0 =\Ext_A^i(\omega_{A/R},M)$ for $i\ge 1$.
\end{enumerate}
\end{lemma}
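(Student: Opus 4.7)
For (1), note that the localization functor $(-)_\fp$ is exact and sends projective $A$-modules to projective $A_\fp$-modules, so a totally acyclic complex $X$ of projectives realising $M\cong\Coker(d_X^{-1})$ yields an acyclic complex $X_\fp$ of projective $A_\fp$-modules with $M_\fp\cong\Coker(d_{X_\fp}^{-1})$. Since $A_\fp$ is itself a Gorenstein $R_\fp$-algebra, \intref{Theorem}{thm:documenta} applied to $A_\fp$ guarantees that this acyclic complex is automatically totally acyclic, whence $M_\fp\in\GProj A_\fp$.

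For the Tor vanishing, \intref{Theorem}{thm:Morita-D} says that $\omega_{A/R}$ is perfect on both sides; in particular as a right $A$-module it has some finite projective dimension $d$. Fix a totally acyclic complex $X$ of projectives with $M=Z^0(X)$, and set $M_n\colonequals Z^n(X)$, which is again G-projective. The short exact sequences $0\to M_n\to X^{n+1}\to M_{n+1}\to 0$, combined with the vanishing $\Tor^A_{j}(\omega_{A/R},X^{n+1})=0$ for $j\ge 1$, yield via the long exact sequence of Tor natural isomorphisms $\Tor_i^A(\omega_{A/R},M)\iso\Tor_{i+n}^A(\omega_{A/R},M_n)$ for all $n\ge 0$ and $i\ge 1$. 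Choosing $n$ with $i+n>d$ forces the right-hand side, and hence the left, to vanish.

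For the Ext vanishing, the dimension-shift used in the Tor argument does not transfer directly, because $\Ext^{j}_A(\omega_{A/R},P)$ can be non-zero for a projective $P$ and $j\ge 1$, so the long exact sequences coming from the totally acyclic complex no longer collapse. My plan is to pair the derived equivalence of \intref{Theorem}{thm:Morita-D} with the Tor vanishing just established: the latter places $\omega_{A/R}\lotimes_A M\simeq\omega_{A/R}\otimes_A M$ in the heart $\Mod A\subset\dcat A$, and the remaining claim becomes that the inverse $\RHom_A(\omega_{A/R},M)$ is likewise concentrated in degree zero. I would first localise via (1) to reduce to the Iwanaga-Gorenstein setting, and then exploit the identification of $\uGProj A$ with $\KacInj A$ together with the autoequivalence $\wh\kan_{A/R}$ of \intref{Theorem}{thm:Morita-K} to transport the degree-zero property back to $\RHom_A(\omega_{A/R},M)$. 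The hardest step, and the principal obstacle, is making this transport rigorous: once one has routine Tor and localisation arguments in hand, the Ext statement is the genuinely subtle half of (2).
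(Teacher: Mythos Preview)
Your arguments for part (1) and for the Tor half of (2) are correct. For (1) you spell out what the paper leaves implicit, namely that acyclicity of the localised complex suffices because $A_\fp$ is again Gorenstein and \intref{Theorem}{thm:documenta} applies. For the Tor vanishing your dimension shift along the totally acyclic complex, using that $\omega_{A/R}$ is perfect on the right, is a different and more elementary route than the paper's: the paper localises to the Iwanaga--Gorenstein case and then invokes the adjunction
\[
\Hom_R(\Tor^A_i(\omega,M),I)\cong\Ext^i_A(M,\Hom_R(\omega,I)),
\]
observing that $J=\Hom_R(\omega,I)$ is injective of finite projective dimension so that the G-projectivity of $M$ forces the right-hand side to vanish. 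Both approaches are sound; yours avoids Matlis duality entirely.

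Your difficulty with the Ext half of (2) is not a gap in your technique but in the statement itself: the assertion $\Ext^i_A(\omega_{A/R},M)=0$ for $i\ge 1$ and all G-projective $M$ is false as written. Take $R=k$ a field and $A$ the path algebra of the quiver $1\to 2$. This is a Gorenstein $k$-algebra of global dimension one, so every G-projective module is projective; in particular $A$ itself is G-projective. Here $\omega_{A/R}=DA$ has the simple $S_1$ as a direct summand, and from the projective resolution $0\to P_2\to P_1\to S_1\to 0$ one reads off $\Ext^1_A(S_1,P_2)\cong k$, whence $\Ext^1_A(\omega_{A/R},A)\ne 0$. So no argument can establish the claim as stated, and your proposed detour through $\KacInj A$ and $\wh\kan_{A/R}$ cannot close the gap. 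The paper's ``similar argument'' almost certainly intends the companion statement $\Ext^i_A(M,\omega_{A/R})=0$, which \emph{does} follow immediately once one knows (as in the paper's Claim) that $\omega_{A/R}$ has finite projective dimension locally, since G-projectives are left Ext-orthogonal to such modules. Only the Tor half of (2) is invoked elsewhere in the paper, in the proof of \intref{Proposition}{prp:Nak-G-sing}.
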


\begin{proof}
Evidently the localisation of an acyclic complex is acyclic, so (1) follows. 

(2) Since an $A$-module is zero if it is zero locally on $\Spec R$, given (1) and the finite generation of $\omega$, we can reduce the verification of (2) to the case when $R$ is local, and so assume the injective dimension of $A$ is finite. Let $I$ be the injective hull of the residue field of $R$, and set  $J\colonequals \Hom_R(\omega,I)$.

\begin{claim}
The $A$-module $J$ is a faithful injective, and has finite projective dimension.

\medskip

Indeed, as $I$ is a faithful injective $R$-module it follows by adjunction that the $A$-module $J$ is faithful and injective. Since $R$ is a Gorenstein local ring it has finite injective dimension, so  $I$ has finite projective dimension; that is to say, $I$ is in $\Thick(\Add R)$ in $\dcat R$. Since $\omega$ is a finite projective $R$-module $\Hom_R(\omega,-)$ is an exact functor on $\dcat A$, so we deduce that $J$ is in $\Thick(\Add \Hom_R(\omega,R))$ in $\dcat A$.  Finally, observe that $A\cong \Hom_R(\omega, R)$ as $A$-modules.

\end{claim}

 The claim and the hypothesis that $M$ is G-projective justify the equality below:
\[
\Hom_R(\Tor^A_i(\omega,M),I) \cong \Ext^i_A(M,J) = 0 \quad\text{for $i\ge 1$;}
\]
see also \eqref{eq:Ext-vanishing}. The isomorphism is standard adjunction. Since $I$ is a faithful injective, it follows that $\Tor^A_i(\omega,M)=0$ as desired. 

A similar argument settles the claim about the vanishing of Ext-modules.
\end{proof}

When $M$ is G-projective and $X\in\KacProj A$ is as above, the truncation $X_{\ges 0}$ is a projective resolution of $M$ and  the total acyclicity of $X$ implies
\begin{equation}
\label{eq:Ext-vanishing}
\Ext_A^i(M,P)=0\quad\text{for each projective module $P$ and $i\ge 1$.}
\end{equation}
Here is a partial converse.

\begin{lemma}
\label{le:Ext1=gp}
A finitely generated $A$-module $M$ satisfying $\Ext_A^i(M,A)=0$ for $i\ge 1$ is G-projective. Moreover, such a module is a syzygy in an acyclic complex of finitely generated projective $A$-modules.
\end{lemma}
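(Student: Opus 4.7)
The plan is to construct an acyclic complex of finitely generated projective $A$-modules having $M$ as a syzygy; \intref{Theorem}{thm:documenta} then automatically upgrades acyclicity to total acyclicity, yielding G-projectivity. The left half of this complex will be a finitely generated projective resolution of $M$, and the right half will be produced by dualising a finitely generated projective resolution of the right $A$-module $M^*\colonequals\Hom_A(M,A)$.

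First I would fix a resolution $P_\bullet\to M$ by finitely generated projective $A$-modules, available since $A$ is left noetherian and $M$ is finitely generated. The vanishing hypothesis $\Ext^i_A(M,A)=0$ for $i\ge 1$ says exactly that $\Hom_A(P_\bullet,A)$ is a resolution of $M^*$; in particular, $M^*$ is finitely generated over $\op A$ (it embeds into a finitely generated free right $A$-module, which is noetherian on that side), and $\RHom_A(M,A)\simeq M^*$ in $\dcat{\op A}$. Applying the biduality equivalence of \intref{Theorem}{thm:Gconverse}(4) to $M$ then forces $\Ext^i_{\op A}(M^*,A)=0$ for $i\ge 1$ and shows that the natural map $M\to\Hom_{\op A}(M^*,A)$ is an isomorphism.

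Next I would choose a finitely generated projective resolution $F_\bullet\to M^*$ over $\op A$ and apply $\Hom_{\op A}(-,A)$. The vanishing just established, together with the biduality $M\cong\Hom_{\op A}(M^*,A)$, makes the resulting sequence
\[
0\lra M\lra Q^0\lra Q^1\lra\cdots,\qquad Q^i\colonequals\Hom_{\op A}(F_i,A),
\]
exact, with each $Q^i$ a finitely generated projective (left) $A$-module. Splicing at $P_0\twoheadrightarrow M\hookrightarrow Q^0$ produces an acyclic complex
\[
X\colon\quad \cdots\lra P_1\lra P_0\lra Q^0\lra Q^1\lra\cdots
\]
of finitely generated projective $A$-modules with $M=\Coker(d_X^{-1})$; exactness at the splice is immediate since $\Ker(P_0\to Q^0)=\Ker(P_0\to M)=\Im(P_1\to P_0)$ and $\Im(P_0\to Q^0)=M=\Ker(Q^0\to Q^1)$. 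By \intref{Theorem}{thm:documenta} any acyclic complex of projective $A$-modules is totally acyclic, so $M$ is G-projective and simultaneously realised as a syzygy in an acyclic complex of finitely generated projectives, which is the ``moreover'' clause.

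The only real point of care is keeping everything finitely generated; this is automatic because $A$ is noetherian on both sides and $M$ is finitely generated, so $M^*$ inherits finite generation. All other steps are formal once \intref{Theorem}{thm:Gconverse} and \intref{Theorem}{thm:documenta} are in hand, so the main conceptual input is simply the biduality provided by the Gorenstein hypothesis.
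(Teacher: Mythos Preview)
Your argument is correct and follows the same overall strategy as the paper: reduce to showing $\Ext^i_{\op A}(M^*,A)=0$ for $i\ge 1$ together with biduality $M\iso M^{**}$, then splice finitely generated projective resolutions of $M$ and $M^*$ to obtain an acyclic (hence, by \intref{Theorem}{thm:documenta}, totally acyclic) complex with $M$ as a syzygy. The only difference is in how the Ext-vanishing and biduality are verified: you invoke the global equivalence of \intref{Theorem}{thm:Gconverse}(4) directly, whereas the paper checks these conditions locally on $\Spec R$, reducing to the Iwanaga--Gorenstein case and citing Buchweitz; since the proof of \intref{Theorem}{thm:Gconverse} already contains that local reduction, your route is a clean shortcut through an already-established result.
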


\begin{proof}
It suffices to verify that  $\Ext_{\op A}^i(M^*,A)=0$ for $i\ge 1$, and that the biduality map $M\to M^{**}$ is bijective; given these, it is straightforward to construct an acyclic complex with $M$ as a syzygy. What is more, using resolutions of $M$ and $M^*$ by finitely generated projective modules, one can get an acyclic complex  consisting of finitely generated projective modules.  Since $M$ is finitely generated, and $A$ is a finite $R$-algebra, both the conditions in question can be checked locally on $\Spec R$. We may thus assume that $A$ is Iwananga-Gorenstein, in which case the desired result is contained in \cite[Lemma~4.2.2(iii)]{Buchweitz:1986}.
\end{proof}

With exact structure inherited from $\Mod A$, the category $\GProj A$ is Frobenius, with projective objects $\Prj A$. Thus the associated stable category, $\uGProj A$, is triangulated. It is also compactly generated, with compact objects $\uGproj A$; see, for example, \cite[Proposition~2.10]{Benson/Iyengar/Krause/Pevtsova:2020a}. By the very definition, G-projectives are related to acyclic complexes of
projectives. To clarify this connection, we recall from \cite[\S7.6]{Iyengar/Krause:2006} that there is an adjoint pair
\[
\begin{tikzcd}
\KacProj A \arrow[hookrightarrow,yshift=.75ex]{rr}
		 &&\KProj  A \arrow[twoheadrightarrow,yshift=-.75ex]{ll}{\bfa}
\end{tikzcd}
\]
where the left adjoint is the inclusion. The next result is well known, and can be readily proved by adapting the argument for \cite[Theorem~4.4.1]{Buchweitz:1986}.

\begin{proposition}
\label{prop:GP=Kprj}
The composition of functors
$\bfa\circ\pres\colon \Mod A\to \KacProj A$ induces a triangle
equivalence
\[
\bfa\pres\colon \uGProj A \longiso \KacProj A\,,
\]
with quasi-inverse  defined by the assignment $X\mapsto \Coker(d^{-1}_X)$. \qed
\end{proposition}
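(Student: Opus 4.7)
The plan is to identify $\bfa\pres M$ with a complete projective resolution of $M$, and to verify that the assignment $X\mapsto \Coker(d_X^{-1})$ gives the quasi-inverse. Fix $M\in \GProj A$ and a totally acyclic complex $T_M\in\bfK_{\mathrm{tac}}(\Prj A)=\KacProj A$ (using \intref{Theorem}{thm:documenta}) with $\Coker(d_{T_M}^{-1})=M$. The brutal truncation $\sigma^{\le 0}(T_M)$ is a projective resolution of $M$, hence represents $\pres M$ in $\KProj A$, and the evident short exact sequence of complexes yields a triangle
\[
\sigma^{>0}(T_M)\lra T_M\lra \pres M\lra \Sigma\sigma^{>0}(T_M)
\]
in $\KProj A$.

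The main step is to show that the composite $T_M\to \pres M$ realises $T_M$ as $\bfa\pres M$. By the defining property of $\bfa$ as right adjoint to the inclusion, this amounts to the vanishing $\KHom(W,\sigma^{>0}(T_M)[n])=0$ for all $W\in\KacProj A$ and all $n\in\mathbb{Z}$. I would prove this by an explicit null-homotopy construction. A chain map $f\colon W\to\sigma^{>0}(T_M)$ vanishes in non-positive degrees, and one builds a null-homotopy $h^n\colon W^n\to T_M^{n-1}$ for $n\ge 2$ inductively: at each step the chain-map condition ensures that the required map $W^n\to T_M^{n-1}$ vanishes on $\Ker d_W^n=\Im d_W^{n-1}$, hence descends to $\Im d_W^n\cong W^n/\Ker d_W^n\hookrightarrow W^{n+1}$. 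Extending this descended map to $W^{n+1}\to T_M^{n-1}$ is obstructed by $\Ext^1_A(W^{n+1}/\Im d_W^n, T_M^{n-1})$; the first argument is a syzygy of the totally acyclic complex $W$ and so is G-projective, while the second is projective, so the obstruction vanishes by~\eqref{eq:Ext-vanishing}. Shifts are handled identically.

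An entirely analogous (in fact simpler) null-homotopy argument shows $\KHom(W,P)=0$ for every projective module $P$ and every $W\in\KacProj A$, whence $\bfa\pres P=0$ for $P$ projective, and $\bfa\pres$ descends to a triangle functor on $\uGProj A$.

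Finally, define $\Psi\colon \KacProj A\to\uGProj A$ by $\Psi(X)=\Coker(d_X^{-1})$; this is G-projective by the definition of $\bfK_{\mathrm{tac}}(\Prj A)$. A chain map of complexes induces a map on cokernels, while a null-homotopy produces a factorisation of the induced cokernel map through the projective $Y^{-1}$ via $h^0$, so $\Psi$ descends to $\uGProj A$ and one checks directly that it is triangulated. The compositions are then immediate: $\Psi(\bfa\pres M)=\Coker(d_{T_M}^{-1})=M$, and $\bfa\pres(\Psi(X))$ is a complete projective resolution of $\Coker(d_X^{-1})$, which is isomorphic to $X$ in $\KacProj A$ since complete resolutions of a given G-projective are unique up to homotopy. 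The most delicate ingredient is the inductive null-homotopy construction of the second paragraph, which crucially uses that syzygies of totally acyclic complexes of projectives are G-projective.
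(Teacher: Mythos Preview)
Your argument is correct and is essentially the standard one; the paper itself gives no proof beyond the remark that the result ``can be readily proved by adapting the argument for \cite[Theorem~4.4.1]{Buchweitz:1986}'' and closes with a \qed. What you have written is precisely such an adaptation: identify a complete resolution with $\bfa\pres M$ via the truncation triangle, and use the G-projectivity of syzygies together with \eqref{eq:Ext-vanishing} to kill the obstruction terms.

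One small indexing slip: in the inductive step your ``required map'' has source $W^n$ but target $T_M^{n-1}$, whereas the map you actually need to factor through $d_W^n$ is $f^n - d^{n-1}h^n\colon W^n\to T_M^n$; likewise the extension you seek is $h^{n+1}\colon W^{n+1}\to T_M^n$, and the obstruction lives in $\Ext^1_A(W^{n+1}/\Im d_W^n, T_M^{n})$. The content is unaffected---the quotient is still a syzygy of $W$, hence G-projective, and the target is still projective---but you should align the superscripts. With that correction the null-homotopy construction, the descent of $\bfa\pres$ to the stable category, and the verification that $X\mapsto\Coker(d_X^{-1})$ is a quasi-inverse all go through as you describe.
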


\subsection*{The singularity category}
Let $\dsing(A)$ be the \emph{singularity category} of $A$, introduced by Buchweitz~\cite{Buchweitz:1986} as the \emph{stable derived category}. It is $\dbcat A$ modulo the perfect complexes.  Any perfect complex is in the kernel of the functor
\[
\bfs \ires\colon \dbcat A\to {\KacInj A}^{\mathrm{c}}
\]
where the functors $\bfs$ and $\ires$ are from \eqref{eq:recollement}. Hence there is an induced exact functor 
\[
\dsing(A)\to {\KacInj A}^{\mathrm{c}}
\]
that we also denote $\bfs\ires$. On the other side, the embedding $\Gproj A\hookrightarrow \dbcat A$ induces an exact functor
\[
\bfg \colon \uGproj A \longrightarrow \dsing(A)\,.
\]
The result below was proved by Buchweitz~\cite[Theorem~4.4.1]{Buchweitz:1986} when $A$ is an Iwanaga-Gorenstein ring.

\begin{theorem}
\label{thm:rob}
Let $A$ be a Gorenstein $R$-algebra. The functors $\bfg$ and $\bfs\ires$ are equivalences, up to direct summands, of triangulated
categories:
\[
\begin{tikzcd}
\uGproj(A)   \arrow{r}{\sim}[swap]{\bfg} & \dsing(A)   \arrow{r}{\sim}[swap]{\bfs\ires} &  {\KacInj A}^{\mathrm c}\,.
\end{tikzcd}
\]
\end{theorem}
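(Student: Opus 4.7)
The plan is to establish the two equivalences in turn: first the right-hand one $\bfs\ires\colon \dsing(A)\to {\KacInj A}^{\mathrm c}$ via a standard localisation argument applied to the recollement~\eqref{eq:recollement}, and then the left-hand one for $\bfg$ by identifying its composition with $\bfs\ires$ with a functor already known to be an equivalence from \intref{Proposition}{prop:GP=Kprj} and \intref{Theorem}{thm:documenta}.

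For $\bfs\ires$, the recollement~\eqref{eq:recollement} presents $\KacInj A$ as the Verdier quotient $\KInj A/\ipres(\dcat A)$, and $\ipres(\dcat A)=\Loc(\ires A)$ is generated as a localising subcategory by the object $\ires A$, which is compact in $\KInj A$. By the Neeman-Ravenel-Thomason localisation theorem for compactly generated triangulated categories, the induced functor on compact objects
\[
\KInjc A/\Thick(\ires A)\lra {\KacInj A}^{\mathrm c}
\]
is fully faithful with dense image, hence an equivalence up to direct summands. Transporting along the equivalence $\bfq\colon \KInjc A\iso \dbcat A$, the thick subcategory $\Thick(\ires A)$ corresponds to the perfect complexes $\Thick(A)\subseteq\dbcat A$, so the source is $\dsing(A)$. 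The resulting functor is the one induced by $\bfs\ires$, since perfect complexes lift under $\ires$ into $\Loc(\ires A)=\ker\bfs$.

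For $\bfg$, I combine the equivalence $\bfa\pres\colon \uGProj A\iso \KacProj A$ of \intref{Proposition}{prop:GP=Kprj} with the equivalence $\bft\colon \KacProj A\iso \KacInj A$ of \intref{Theorem}{thm:documenta}. Both preserve arbitrary direct sums, hence preserve compactness. Since the compact objects in $\uGProj A$ are $\uGproj A$, the composite $\bft\circ\bfa\pres$ restricts to an equivalence $\uGproj A\iso {\KacInj A}^{\mathrm c}$, which is in particular an equivalence up to summands. It remains to verify that this composite is naturally isomorphic to $\bfs\ires\circ \bfg$: for a finitely generated G-projective $M$, the brutal truncation triangle
\[
(\bfa\pres M)^{\ges 1}\lra \bfa\pres M\lra \pres M\lra
\]
in $\KProj A$, together with the triangle $E\otimes_A\pres M\to \ires M\to \bfs\ires M\to$ from \intref{Lemma}{lem:lambda2}, yields a comparison morphism $\bft(\bfa\pres M)\to \bfs\ires M$ that is an isomorphism by an octahedron argument, using that $(\bfa\pres M)^{\ges 1}$ represents a shift of $M$ in $\dcat A$. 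Granting this identification, since $\bfs\ires$ is an equivalence up to summands and $\bft\bfa\pres$ is one, so too is $\bfg$.

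The main obstacle is the natural identification $\bfs\ires\circ \bfg\iso \bft\circ\bfa\pres$; careful bookkeeping with shifts and truncations is required. A direct alternative would follow Buchweitz~\cite{Buchweitz:1986}: full faithfulness of $\bfg$ flows from the Ext-vanishing~\eqref{eq:Ext-vanishing} for G-projective modules together with a calculus-of-fractions computation in $\dsing(A)$, while essential surjectivity up to direct summands requires Cohen-Macaulay (G-projective) approximations for arbitrary objects of $\dbcat A$. The latter is precisely what the appendix is designed to provide, extending Auslander-Buchweitz approximation theory to the locally-Gorenstein setting of this paper.
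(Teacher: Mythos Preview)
Your treatment of $\bfs\ires$ is exactly what the paper does: it simply cites \cite[Corollary~5.4]{Krause:2005}, which is the Neeman--Thomason localisation argument you sketch.

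For $\bfg$ the paper takes a different, more direct route. Rather than passing through the big equivalences of \intref{Proposition}{prop:GP=Kprj} and \intref{Theorem}{thm:documenta} and then identifying $\bfs\ires\circ\bfg$ with $\bft\circ\bfa\pres$, the paper argues in $\dsing(A)$ itself: full faithfulness is obtained from \eqref{eq:Ext-vanishing} via the calculus-of-fractions computation in \cite[Proposition~1.21]{Orlov:2004a}, and essential surjectivity comes from showing that every object of $\dsing(A)$ is isomorphic to a shift of a finitely generated module $M$, and then that a high syzygy of $M$ is G-projective. For this last step the paper does \emph{not} invoke the approximation theory of the appendix; it uses the much lighter combination of \intref{Theorem}{thm:Gconverse}(3), which gives $\Ext^i_A(M,A)=0$ for $i\gg 0$, and \intref{Lemma}{le:Ext1=gp}. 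Your alternative paragraph is therefore close to the paper's argument, but overshoots on the machinery needed.

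Your main approach is a legitimate strategy and, once the identification $\bfs\ires\circ\bfg\cong\bft\circ\bfa\pres$ is nailed down, would in fact yield that $\bfg$ is an honest equivalence (since $\uGproj A$, being the compacts in $\uGProj A$, is idempotent complete). Two cautions: first, ``preserve direct sums, hence preserve compactness'' is not a valid inference---the correct reason those functors preserve compacts is that they are equivalences. Second, the octahedron/truncation sketch for the identification is the genuinely delicate point you flag; the paper avoids it entirely, and indeed runs the logic in the opposite direction, deducing \intref{Corollary}{cor:GP=KInj} from \intref{Theorem}{thm:rob} by d\'evissage rather than the reverse.
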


\begin{proof}
The assertion about $\bfs\ires$ is by \cite[Corollary~5.4]{Krause:2005}. 

Let $M,N$ be finitely generated G-projective $A$-modules. As noted in \eqref{eq:Ext-vanishing}, one has $\Ext^i_A(M,A)=0$ for $i\ge 1$. Arguing as in the proof of \cite[Proposition~1.21]{Orlov:2004a} one gets that  $\bfg$ induces a bijection:
\[
\sHom_A(M,N) \xra{\cong} \Hom_{\dsing}(\bfg M,\bfg N)\,.
 \]
Thus $\bfg$ is fully faithful. It remains to prove that it  is essentially surjective. 

Fix $X$ in $\dsing(A)$; we can assume that $X$ is a bounded above complex of finitely generated projective $A$-modules. Suppose $H^i(X)=0$ for all $i < n$. Truncating at $n$ yields a morphism $X\to \sigma_{\le n}X$ which is an isomorphism in $\dsing(A)$ since its cone is perfect.  Thus $X$ is isomorphic to a suspension of $M\colonequals \Coker(d_X^{n-1})$ in $\dsing(A)$. Since $\Ext^i_A(M,A)$ for $i\gg 0$ by \intref{Theorem}{thm:Gconverse},  some syzygy of $M$ is G-projective by \intref{Lemma}{le:Ext1=gp}, and we conclude that $\bfg$ is essentially surjective.
\end{proof}

A standard d\'evissage argument yields the following consequence.

\begin{corollary}
\label{cor:GP=KInj}
The composition of functor $\bfs\circ \ires \colon \Mod A\to \KInj A$
induces a triangle equivalence
\[
\bfs\ires\colon \uGProj A \longiso \KacInj A\,.
\]
\end{corollary}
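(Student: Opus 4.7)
The plan is to descend $\bfs\ires$ to a coproduct-preserving functor $\uGProj A \to \KacInj A$ and then invoke a standard d\'evissage for compactly generated triangulated categories, using the equivalence on compact objects supplied by \intref{Theorem}{thm:rob}.

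First I would check that $\bfs\ires$ annihilates projective modules. For $P$ projective, $\ires P$ lies in $\Loc(\ires A)$, which coincides with $\ker\bfs$ by the recollement \eqref{eq:recollement}; hence $\bfs\ires P = 0$, and any morphism in $\GProj A$ that factors through a projective is killed. The induced functor $\uGProj A \to \KacInj A$ preserves coproducts: since $A$ is noetherian, arbitrary direct sums of injective modules are injective, so a coproduct of injective resolutions is an injective resolution and $\ires \colon \Mod A \to \KInj A$ preserves coproducts; and $\bfs$ preserves coproducts because it is a left adjoint to the inclusion $\KacInj A \hookrightarrow \KInj A$.

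Both $\uGProj A$ and $\KacInj A$ are compactly generated triangulated categories, with compact objects $\uGproj A$ and ${\KacInj A}^{\mathrm c}$ respectively. On $\uGproj A$, the restriction of $\bfs\ires$ agrees with the composition $(\bfs\ires)\comp\bfg$ of \intref{Theorem}{thm:rob}, which is an equivalence up to direct summands; in particular this restriction lands in ${\KacInj A}^{\mathrm c}$, is fully faithful, and its essential image generates $\KacInj A$ as a localizing subcategory.

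To conclude I would invoke the standard fact that an exact, coproduct-preserving functor between compactly generated triangulated categories that restricts to a fully faithful functor on compacts, with image generating the target as a localizing subcategory, must be an equivalence: full faithfulness extends from compacts to the whole category by expressing objects as homotopy colimits of compacts, and essential surjectivity follows because the essential image is then a localizing subcategory containing a generating set. The only potentially delicate point is the coproduct preservation of $\ires$ on $\Mod A$, and this is precisely where the noetherian hypothesis on $A$ enters essentially.
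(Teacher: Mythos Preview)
Your proposal is correct and follows essentially the same route as the paper's proof: both argue that $\uGProj A$ and $\KacInj A$ are compactly generated, that $\bfs\ires$ preserves coproducts, and that the restriction to compacts is an equivalence (up to summands) by \intref{Theorem}{thm:rob}, so a standard d\'evissage finishes. You supply more detail than the paper does---in particular the check that $\bfs\ires$ descends to the stable category and the explicit reason $\ires$ preserves coproducts---but the strategy is the same.
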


\begin{proof}
  The triangulated categories $\uGProj A$ and $\KacInj$ are both
  compactly generated and the functor $\bfs\ires$ preserves
  coproducts. For the compact generation of $\KacInj$, see
  \cite[Corollary~5.4]{Krause:2005}, and $\bfs\ires$ preserves coproducts
  since $\bfs$ is a left adjoint.  Thus the assertion follows from the
  fact that $\bfs\ires$ is a triangle equivalence when restricted to
  the subcategories of compact objects; see \intref{Theorem}{thm:rob}.
\end{proof}

\subsection*{The Nakayama functor}
Via the equivalences of categories established above the
auto-equivalence of $\KacInj A$ given by Nakayama functor induces an
auto-equivalence on $\uGProj A$ and on the singularity category. This
is made explicit in the next two results. The functor $\gp(-)$ that
appears in the statements is the G-projective approximations whose
existence is established in \intref{Theorem}{thm:g-approximations}.
When $A$ is a Gorenstein $R$-algebra, it follows from
\intref{Theorem}{thm:Morita-D} that functor $\omega_{A/R}$ takes
perfect complexes to perfect complexes, and hence induces a functor on
the quotient $\dsing(A)$; we denote that functor also
$\omega_{A/R}\lotimes_A(-)$.

\begin{proposition}
\label{prp:Nak-G-sing}
Let $A$ be a Gorenstein $R$-algebra. One has the following diagram of equivalences of categories
\[
\begin{tikzcd}
\uGproj(A)   \arrow{d}{\sim}[swap]{\gp(\omega_{A/R}\otimes_A(-))}\arrow{r}{\sim}[swap]{\bfg} 
	& \dsing(A)   \arrow{r}{\sim}[swap]{\bfs\ires} \arrow{d}{\sim}[swap]{\omega_{A/R}\lotimes_A(-)}
		&  {\KacInj A}^{\mathrm c} \arrow{d}{\wh\kan_{A/R}^{-1}}[swap]{\sim} \\
\uGproj(A)	\arrow{r}{\sim}[swap]{\bfg} & \dsing(A)   \arrow{r}{\sim}[swap]{\bfs\ires} &  {\KacInj A}^{\mathrm c} 
\end{tikzcd}
\]
where the squares commute up to an isomorphism of functors. 
\end{proposition}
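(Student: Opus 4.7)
The plan is to verify that each of the three vertical arrows is an equivalence, and then to check the commutativity of the two squares separately. The horizontal arrows are equivalences (up to direct summands) by \intref{Theorem}{thm:rob}, so the proposition ultimately reduces to two compatibility statements.

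That the three vertical arrows are equivalences is straightforward from the preceding sections. The right arrow $\wh\kan_{A/R}^{-1}$ is an auto-equivalence of $\KacInj A$ by \intref{Theorem}{thm:Morita-K} and hence restricts to ${\KacInj A}^{\mathrm c}$, since equivalences preserve compact objects. The middle arrow $\omega_{A/R}\lotimes_A-$ is an auto-equivalence of $\dbcat A$ by \intref{Theorem}{thm:Morita-D}; as $\omega_{A/R}$ is perfect on both sides it sends $\Thick(A)$ to itself, hence descends to an auto-equivalence of $\dsing(A)$. Finally, for a finitely generated G-projective $M$, the module $\omega_{A/R}\otimes_A M$ is finitely generated and isomorphic to $\omega_{A/R}\lotimes_A M$ by \intref{Lemma}{lem:Gproj-props}(2); its G-projective approximation is provided by \intref{Theorem}{thm:g-approximations} from the appendix, and the resulting endofunctor of $\uGproj(A)$ must be an equivalence once the commutativity of the diagram is established.

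For the right square, what is required is a natural isomorphism $\wh\kan\circ \bfs\ires\cong \bfs\ires\circ \RHom_A(\omega_{A/R},-)$ of functors $\dsing(A)\to{\KacInj A}^{\mathrm c}$. \intref{Theorem}{thm:Morita-K} provides $\wh\kan\circ \ires\cong \ires\circ \RHom_A(\omega_{A/R},-)$; applying $\bfs$ then reduces the problem to the assertion that $\wh\kan\circ \bfs\cong \bfs\circ \wh\kan$ on all of $\KInj A$. To prove the latter, I would show that $\wh\kan$ preserves both pieces of the recollement \eqref{eq:recollement}. Preservation of $\KacInj A$ is part of \intref{Theorem}{thm:Morita-K}. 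For preservation of $\Loc(\ires A)=\ker(\bfs)$, note that $\wh\kan(\ires A)\cong \ires\RHom_A(\omega_{A/R},A)$ by \intref{Theorem}{thm:Morita-K}, and $\RHom_A(\omega_{A/R},A)$ is perfect by \intref{Lemma}{lem:biperfect}, hence lies in $\Thick(A)\subseteq\dbcat A$; thus $\wh\kan(\ires A)\in\Loc(\ires A)$, and since $\wh\kan$ is a coproduct-preserving triangulated functor it preserves $\Loc(\ires A)$ as a whole. Applying $\wh\kan$ to the canonical recollement triangle $\ipres\bfq X\to X\to \bfs X\to$ then yields a triangle whose outer terms lie in the correct pieces; by uniqueness of the recollement decomposition it agrees with the canonical triangle for $\wh\kan X$, giving the desired isomorphism.

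For the left square, given finitely generated G-projective $M$ we require a natural isomorphism $\bfg(\gp(\omega_{A/R}\otimes_A M))\cong \omega_{A/R}\lotimes_A \bfg(M)$ in $\dsing(A)$. By \intref{Lemma}{lem:Gproj-props}(2) the right-hand side equals $\omega_{A/R}\otimes_A M$ in $\dbcat A$, and the G-projective approximation from \intref{Theorem}{thm:g-approximations} gives an exact sequence $0\to Y\to \gp(\omega_{A/R}\otimes_A M)\to \omega_{A/R}\otimes_A M\to 0$ with $Y$ of finite projective dimension, hence perfect; therefore $\gp(\omega_{A/R}\otimes_A M)\cong \omega_{A/R}\otimes_A M$ in $\dsing(A)$, and naturality is inherited from the functoriality of the G-projective approximation. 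I expect the main obstacle to be the right square: the clean verification via the recollement rests crucially on the perfection of $\RHom_A(\omega_{A/R},A)$ from \intref{Lemma}{lem:biperfect}, and making the natural isomorphism genuinely functorial requires the uniqueness assertion for the recollement decomposition rather than a mere objectwise check.
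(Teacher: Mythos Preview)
Your approach mirrors the paper's: both squares are handled by the same ingredients (\intref{Lemma}{lem:Gproj-props}(2) plus the G-projective approximation for the left square; \intref{Theorem}{thm:Morita-K} together with the commutation $\wh\kan\,\bfs\cong\bfs\,\wh\kan$ for the right), and your justification of that commutation via preservation of $\Loc(\ires A)$ is in fact more explicit than the paper's bare citation of \intref{Theorem}{thm:Morita-K}. The one place the paper is more careful is the left square: it invokes \intref{Lemma}{le:Ext1=gp} and the explicit construction in the proof of \intref{Theorem}{thm:g-approximations} to obtain an approximation sequence $0\to P\to\gp(\omega\otimes_A M)\to\omega\otimes_A M\to 0$ with $P$ a \emph{finitely generated projective}, whereas your appeal to \intref{Theorem}{thm:g-approximations} alone only places $Y$ in $\Fin(A/R)\subseteq\Mod A$, so the finite generation of $Y$ (and hence its perfection in $\dcat A$) still needs a word of justification.
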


\begin{proof}
The equivalences in the rows are from \intref{Theorem}{thm:rob}. We already know that $\wh{\kan}$ is an equivalence, so one has only to verify the commutativity of the diagram. 

The commutativity of the square on the left is tantamount to: For each
G-projective $A$-module $M$ there is a natural isomorphism between
$\omega\lotimes_AM$ and $\gp(\omega\otimes_AM)$, viewed as objects in
$\dsing(A)$. As noted in \intref{Lemma}{le:Ext1=gp}, finitely
generated G-projective modules are syzygies in acyclic complexes of
finitely generated projective modules. Thus the proof of \intref{Theorem}{thm:g-approximations} yields
an exact sequence of $A$-modules
\[
0\lra P \lra \gp(\omega\otimes_AM) \lra \omega\otimes_A M\lra 0
\]
with $\gp(\omega\otimes_AM)$ a G-projective and $P$ a finitely generated projective.   This gives the isomorphism on the left
\[
\gp(\omega\otimes_AM) \longiso \omega\otimes_A M \longliso \omega\lotimes_AM
\]
in $\dsing(A)$. The one on the right is by \intref{Lemma}{lem:Gproj-props}(2), for the latter is tantamount to the statement that the natural morphism of complexes $\omega\lotimes_AM\to (\omega\otimes_AM)$ is an isomorphism in $\dcat A$, and so also in $\dsing A$. 

For $X\in\dbcat A$, from \intref{Lemma}{lem:cNak} and \intref{Theorem}{thm:Morita-D} one gets isomorphisms
\[
\wh\kan \ires(\omega\lotimes_A X) \cong \wh\kan (\omega\lotimes_A\ires X) \cong \RHom_A(\omega,\omega\lotimes_A\ires X) \cong \ires X\,.
\]
Applying $\bfs$ to the composition, and observing that $\wh\kan$ commutes with $\bfs$ by \intref{Theorem}{thm:Morita-K}, 
yields the  commutativity of the square on the right.
 \end{proof}

The commutativity of the outer square in \intref{Proposition}{prp:Nak-G-sing} lifts to the corresponding ``big" categories.

\begin{proposition}
\label{prp:g=tac}
The functor $\gp(\omega_{A/R}\otimes_A-)\colon \uGProj A\to \uGProj A$ is an equivalence of triangulated categories, with quasi-inverse $\gp\Hom_A(\omega_{A/R},-)$. Moreover, the diagram below commutes up to an isomorphism of functors:
\[
\begin{tikzcd}[column sep=large]
\uGProj A  \arrow[rightarrow]{d}{\sim}[swap]{\gp_A(\omega_{A/R}\otimes_A-)} \arrow[rightarrow]{r}{\bfs\ires}[swap]{\sim} 
	& \KacInj A \arrow[rightarrow]{d}{{\wh\kan_{A/R}}^{-1}}[swap]{\sim} \\
\uGProj A \arrow[rightarrow]{r}{\bfs\ires}[swap]{\sim}  & \KacInj A\,.
\end{tikzcd}
\]
\end{proposition}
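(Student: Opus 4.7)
The plan is to realize both $\gp(\omega_{A/R}\otimes_A-)$ and $\gp\Hom_A(\omega_{A/R},-)$ as the transports along the equivalence $\bfs\ires$ of $\wh\kan_{A/R}^{-1}$ and $\wh\kan_{A/R}$, respectively. Set
\[
S\colonequals (\bfs\ires)^{-1}\circ\wh\kan_{A/R}^{-1}\circ(\bfs\ires)\colon\uGProj A\lra \uGProj A.
\]
By \intref{Corollary}{cor:GP=KInj} and \intref{Theorem}{thm:Morita-K} this is an auto-equivalence, so it suffices to construct a natural isomorphism $S\iso\gp(\omega_{A/R}\otimes_A-)$, which will simultaneously yield the commutativity of the square and the fact that $\gp(\omega_{A/R}\otimes_A-)$ is an equivalence.

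I will verify first that both $S$ and $\gp(\omega_{A/R}\otimes_A-)$ preserve arbitrary direct sums. For $S$ this is automatic, since each factor in its defining composite is a coproduct-preserving equivalence of compactly generated triangulated categories. For $\gp(\omega_{A/R}\otimes_A-)$, the functor $\omega_{A/R}\otimes_A-$ preserves coproducts as a left adjoint, and $\gp$ preserves coproducts because it is right adjoint to the inclusion $\uGProj A\hookrightarrow\underline{\Mod A}$, which sends the compact objects of $\uGProj A$, namely $\uGproj A$, into compacts on the other side.

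On the compact generators $\uGproj A\subseteq \uGProj A$, the commutativity of the outer square in \intref{Proposition}{prp:Nak-G-sing} together with the compact-object identification of \intref{Theorem}{thm:rob} supplies a natural isomorphism $S|_{\uGproj A}\iso \gp(\omega_{A/R}\otimes_A-)|_{\uGproj A}$. Since $\uGProj A$ is compactly generated by $\uGproj A$ and both functors preserve coproducts, a standard d\'evissage (every object is a homotopy colimit of objects from $\uGproj A$, and coproduct-preserving triangle functors preserve such homotopy colimits) promotes this to a natural isomorphism of functors on all of $\uGProj A$. This proves both the commutative square and the first assertion.

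For the quasi-inverse I would apply the same strategy with $\wh\kan_{A/R}$ in place of $\wh\kan_{A/R}^{-1}$. For a finitely generated G-projective $M$, the G-approximation of $\Hom_A(\omega,M)$ fits in a short exact sequence $0\to P\to \gp\Hom_A(\omega,M)\to \Hom_A(\omega,M)\to 0$ with $P$ projective, which combined with \intref{Lemma}{lem:Gproj-props}(2) yields an isomorphism $\gp\Hom_A(\omega,M)\cong \RHom_A(\omega,M)$ in $\dsing(A)$. Coupling this with the identification $\wh\kan\ires\cong \ires\RHom_A(\omega,-)$ on compacts from \intref{Lemma}{lem:cNak} and the same d\'evissage identifies $\gp\Hom_A(\omega_{A/R},-)$ as the transport of $\wh\kan_{A/R}$ along $\bfs\ires$. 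Mutual invertibility of $\wh\kan_{A/R}$ and $\wh\kan_{A/R}^{-1}$ then gives mutual invertibility of $\gp(\omega_{A/R}\otimes_A-)$ and $\gp\Hom_A(\omega_{A/R},-)$. The main obstacle in this plan is the coproduct-preservation of $\gp$, which hinges on the correct identification of compact objects on both sides of the underlying adjunction; once that is settled, the remainder is routine compact-generation machinery layered on top of \intref{Proposition}{prp:Nak-G-sing}.
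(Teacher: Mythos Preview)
Your approach is the same as the paper's: invoke \intref{Proposition}{prp:Nak-G-sing} on compacts, verify that all functors preserve coproducts, and conclude by d\'evissage. The paper's proof says exactly this in one sentence; you have merely unpacked it.

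The one weak spot is your justification that $\gp$ preserves coproducts. You argue that $\gp$ is right adjoint to the inclusion $\uGProj A\hookrightarrow\underline{\Mod A}$ and that this inclusion sends compacts to compacts. But $\underline{\Mod A}$ is not a triangulated category unless $A$ is quasi-Frobenius, so there is no notion of compact object on the target, and the ``right adjoint whose left adjoint preserves compacts'' argument does not apply as stated. A cleaner route is to transport along the equivalence of \intref{Proposition}{prop:GP=Kprj}: the functor $\gp$ on the stable category corresponds to the right adjoint $\bfa$ of the inclusion $\KacProj A\hookrightarrow\KProj A$, and in that setting one is in a genuine compactly generated triangulated framework where the adjoint argument goes through. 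Alternatively, one can check directly from the construction of approximations in \intref{Theorem}{thm:g-approximations} that a coproduct of approximation sequences is again an approximation sequence, since $\GProj A$ is closed under coproducts and $\Fin(A/R)=(\GProj A)^\perp$ is as well. Once this is fixed, the rest of your argument is fine.
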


\begin{proof}
The crucial observation is that the categories involved are compactly generated, and all the functors involved commute with direct sums. Thus the desired result is a consequence of \intref{Proposition}{prp:Nak-G-sing}.
\end{proof}

\section{Localisation and torsion functors}
\label{se:bik}
As before let $A$ be a finite $R$-algebra. In what follows we apply the theory of local cohomology and localisation from \cite{Benson/Iyengar/Krause:2008a}, with respect to the action of the ring $R$ on the homotopy category of injective $A$-modules. To that end we recall some results concerning the structure of injective $A$-modules discovered by Gabriel~\cite{Gabriel:1962}; it extends the (by now well-known) theory for commutative  rings.

To begin with, by the \emph{spectrum} of $A$ we mean the collection of two sided prime ideals of $A$, denoted $\Spec A$. Since the map $\eta\colon R\to A$ is central and finite, the induced map on spectra
\[
\Spec A\lra \Spec R \quad \text{where $\fq\mapsto\fq\cap R$ for $\fq\in\Spec A$},
\]
is surjective onto $\Spec \eta(R)$, which is a closed subset of
$\Spec R$. Moreover, the fibres of the map are \emph{discrete}: if
$\fq'\subseteq \fq$ are elements of $\Spec A$ such that
$\fq'\cap R=\fq\cap R$, then $\fq'=\fq$; see \cite[Proposition~V.11]{Gabriel:1962}.

\subsection*{Torsion}
For each $\fp$ in $\Spec R$ there is a natural $A$-module structure of $M_\fp$ for which the canonical map $M\to M_\fp$ is $A$-linear.

A subset $V\subseteq \Spec R$ is \emph{specialisation closed} when it
has the following property: If $\fp\subseteq\fp'$ are prime ideals in
$R$ and $\fp$ is in $V$, then $\fp'$ is in $V$; equivalently, that $V$
contains the closure (in the Zariski) topology of its points. The
following specialisation closed subsets play a central role: Given
an ideal $\mathfrak a\subset R$, the subset
\[
V(\mathfrak a)\colonequals \{\fp\in\Spec R\mid \fp\supseteq \mathfrak a\}
\]
of $\Spec R$, and  given a prime $\fp$ in $\Spec R$, the subset
\[
Z(\fp)\colonequals \{\fp'\in\Spec R\mid \fp'\not\subseteq \fp\}\,.
\]
Observe that $Z(\fp)$ equals $\Spec R\setminus \Spec R_\fp$.

Give a specialisation closed subset $V$ of $\Spec R$, the
\emph{$V$-torsion} submodule of an $A$-module $M$ is defined by
\[
\gam_VM\colonequals \Ker(M\lra \prod_{\fp\not\in V}M_\fp)\,.
\]
The assignment $M\mapsto \gam_VM$ is an additive, left-exact, functor
on $\Mod A$. The module $M$ is called \emph{$V$-torsion} if
$\gam_V M=M$.

It is easy to verify that when $V\colonequals V(r)$ for an element $r\in R$, one has
\[
\gam_{V(r)}M = \Ker(M\lra M_r)
\]
where $M_r$ is the localisation of $M$ at the multiplicatively closed subset $\{r^n\}_{n\ges 0}$, and that when $V\colonequals Z(\fp)$, for some $\fp\in \Spec R$, one gets
\[
\gam_{Z(\fp)}M = \Ker(M\lra M_\fp)\,.
\]

\subsection*{Injective modules}
Since $A$ is noetherian, $\Inj A$, the full subcategory of $\Mod A$
consisting of injective modules, is closed under arbitrary direct
sums. For a $\fq$ in $\Spec A$ the injective hull of the $A$-module
$A/\fq$ decomposes into a finite direct sum of copies of an
indecomposable injective module which we denote by $I(\fq)$. Since $A$
is a finite $R$-algebra, the assignment $\fq\mapsto I(\fq)$ is a
bijection between $\Spec A$ and the isomorphism classes of
indecomposable injective $A$-modules, by
\cite[V.4]{Gabriel:1962}. Thus each injective $A$-module is a direct
sum of copies of $I(\fq)$, as $\fq$ varies over $\Spec A$.

\begin{lemma} 
\label{lem:gamE}
Let $V\subseteq \Spec R$ be specialisation closed. For each injective $A$-module $I$, the module $\gam_V I$ is a
direct summand of $I$. Thus for $\fq$ in $\Spec A$ one has
\[
\gam_VI(\fq)=\begin{cases}
I(\fq) & \text{when $\fq\cap R\in V$},\\
0 &\text{otherwise}.
\end{cases}
\]
 \end{lemma}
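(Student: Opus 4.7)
My plan is to isolate the indecomposable case via Gabriel's decomposition of injectives and then reassemble. First, $\gam_V$ commutes with arbitrary direct sums: using the equivalent description $\gam_V M = \{m\in M\mid V(\operatorname{ann}_R m)\subseteq V\}$, an element of a direct sum lies in $\gam_V$ precisely when each of its components does. Since every injective $A$-module decomposes as $I=\bigoplus_j I(\fq_j)^{(\alpha_j)}$, once the displayed formula is proved for each $I(\fq)$, the module $\gam_V I$ will be the direct sum of exactly those summands with $\fq_j\cap R\in V$, and hence a direct summand of $I$.

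The formula itself will follow from two structural properties of $I(\fq)$: (i) every element of $R\setminus \fq$ acts bijectively on $I(\fq)$; (ii) every element of $I(\fq)$ is annihilated by some power of $\fq\cap R$. Granted these: if $\fq\cap R\notin V$, then (i) makes the localisation map $I(\fq)\to I(\fq)_{\fq\cap R}$ an isomorphism, so $\gam_V I(\fq)$ is contained in its kernel, which is $0$. If $\fq\cap R\in V$ and $\fp\notin V$, then specialisation closure forces $\fq\cap R\not\subseteq\fp$, so there exists $s\in(\fq\cap R)\setminus\fp$; by (ii) each $x\in I(\fq)$ satisfies $s^n x=0$ for some $n$, and $s^n\notin\fp$, whence $I(\fq)_\fp=0$ and $\gam_V I(\fq)=I(\fq)$.

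Property (i) is routine. For central $s\in R\setminus\fq$, the two-sided ideals $sA=As$ and $AaA$ satisfy $(sA)(AaA)\subseteq \fq$ whenever $sa\in\fq$, so primeness of $\fq$ forces $a\in\fq$. Thus multiplication by $s$ is injective on $A/\fq$; since $A/\fq$ is essential in the indecomposable injective $I(\fq)$, the kernel of this multiplication on $I(\fq)$ meets $A/\fq$ trivially, hence is zero. The image $sI(\fq)\cong I(\fq)$ is then a nonzero injective submodule, hence a direct summand, of the indecomposable $I(\fq)$, so equals $I(\fq)$.

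Property (ii) is the main obstacle. Using (i), one may replace $R$ by $R_{\fq\cap R}$ and assume $R$ is local with maximal ideal $\fp_0=\fq\cap R$. In this local setting, $A$ is finitely generated projective, hence free of some rank $n$, over $R$; writing $J$ for the injective hull of $R/\fp_0$ over $R$, Hom-tensor adjunction gives $\Hom_R(A,J)\cong J^n$ as $R$-modules, which is Artinian. Now $A/\fq$ is a finite-dimensional vector space over the residue field $R/\fp_0$, so embeds as an $R$-module in $J^d$ for some $d$; the adjoint $A$-linear map $A/\fq\hookrightarrow \Hom_R(A,J)^d$ is also injective, and since the target is $A$-injective (restriction being exact), it extends to an $A$-linear map $I(\fq)\to \Hom_R(A,J)^d$ whose kernel meets $A/\fq$ trivially and is therefore zero by essentiality. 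Thus $I(\fq)$ embeds in the Artinian $A$-module $\Hom_R(A,J)^d$, hence is itself Artinian; every cyclic $A$-submodule has finite length over the local ring $R$, and is thus annihilated by a power of $\fp_0=\fq\cap R$, establishing (ii) and completing the proof plan.
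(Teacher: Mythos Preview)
Your proof is correct and takes a genuinely different route from the paper's. The paper argues categorically: the $V$-torsion modules form a localising subcategory closed under injective envelopes (citing Gabriel, Proposition~V.12), so $\gam_V$ preserves injectivity, whence $\gam_V I$ is an injective submodule of $I$ and therefore a direct summand; indecomposability then forces $\gam_V I(\fq)$ to be $0$ or $I(\fq)$, and the determination of which case occurs is left implicit. You instead work elementwise, reducing via Gabriel's decomposition and the commutation of $\gam_V$ with direct sums to the indecomposable case, and then settling that case directly from the structural properties (i) and (ii) that you derive from first principles. Your approach is longer but self-contained and yields finer information about $I(\fq)$ (notably the $(\fq\cap R)$-power torsion property), whereas the paper buys brevity by invoking a nontrivial result from Gabriel's thesis.

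One small correction: in your argument for (ii) you assert that, after localising, $A$ is projective and hence free over $R$. In the context of this lemma $A$ is only assumed to be a finite $R$-algebra, not projective. This does not affect your argument, since only finite generation is needed: a surjection $R^n\twoheadrightarrow A$ yields an embedding $\Hom_R(A,J)\hookrightarrow J^n$, which already shows $\Hom_R(A,J)$ is Artinian.
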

 
 \begin{proof}
   The functor $\gam_V$ provides a right adjoint for the inclusion of
   the localising subcategory of $A$-modules that are
   $V$-torsion. This functor preserves injectivity, since the
   localising subcategory is stable under taking injective envelopes,
   by \cite[Proposition~V.12]{Gabriel:1962}. Thus $\gam_V I$ is a
   direct summand of $I$ for every injective $A$-module $I$. In
   particular, we have $\gam_V I= I$ or $\gam_V I= 0$ when $I$ is 
   indecomposable.
 \end{proof}
 
 Since $\gam_V$ is an additive functor, it induces a functor on the
 category of $A$-complexes. For each complex $X$ of injective
 $A$-modules set
\[
L_VX\colonequals \Coker(\gam_VX\lra X)\,.
\]
Thus one gets an exact sequence of $A$-complexes
\[
0 \lra \gam_VX\lra X \lra L_VX\lra 0\,.
\]
By \intref{Lemma}{lem:gamE} the subcomplex $\gam_VX$ consists of injective $A$-modules so the sequence above is degree-wise split
exact, and hence induces in $\KInj A$ an exact triangle
\begin{equation}
\label{eq:locV}
\gam_VX\lra X \lra L_VX\lra \Sigma\gam_VX\,.
\end{equation}

The functor $L_V$ has an explicit description in a couple of cases.

\begin{example}
\label{exa:vr}
Suppose $V\colonequals V(r)$, for some $r\in R$. Then the map $X\to X_r$ is surjective, by \intref{Lemma}{lem:gamE}, so there is an exact sequence
\[
0 \lra \gam_{V(r)} X\lra X \lra X_r\lra 0
\]
 of $A$-complexes, and hence $L_{V(r)}X = X_r$.  By the same token, when  $V\colonequals Z(\fp)$ for some prime $\fp$ in $\Spec R$, one gets an exact sequence
\[
0 \lra \gam_{Z(\fp)} X\lra X \lra X_\fp\lra 0
\]
of $A$-complexes, so that $L_{Z(\fp)}X = X_\fp$.
 \end{example}

 \subsection*{Localisation and local cohomology}
 
The ring $R$ acts on $\KMod A$ and hence on its subcategories discussed above, in the sense of \cite{Benson/Iyengar/Krause:2008a}. We focus on $\cat T=\KInj A$.

For any localising subcategory $\cat C\subseteq \cat T$ and object  $X\in\cat T$ we call an exact triangle
\[
\gam X\lra X \lra L X\lra \Sigma\gam X
\]
a \emph{localisation triangle} provided that $\gam X\in \cat C$ and $LX\in\cat C^\perp$, where $\cat C^\perp\subseteq\cat T$ denotes the colocalising subcategory consisting of objects $Y$ such that $\Hom_{\cat T}(X,Y)=0$ for all $X\in\cat C$. If such a triangle exists for all objects $X\in\cat T$, then $\gam$ yields a right adjoint for the inclusion $\cat C\hookrightarrow\cat T$ and $L$ yields a left adjoint for the  inclusion $\cat C^\perp\hookrightarrow\cat T$.

Given a specialisiation closed subset $V\subseteq\Spec R$, an object $X$ in $\cat T$ is  \emph{$V$-torsion} provided that $\Hom_{\cat T}(C,X)$
is a $V$-torsion $A$-module for each compact $C\in\cat T$.

\begin{lemma}
  For a specialisiation closed subset $V\subseteq\Spec R$, the  triangle  \eqref{eq:locV} is the localisation triangle associated to the localising subcategory of $V$-torsion objects in $\KInj A$.
\end{lemma}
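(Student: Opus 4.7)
The plan is to identify the triangle~\eqref{eq:locV} with the canonical localisation triangle for the localising subcategory $\cat T_V \subseteq \cat T \colonequals \KInj A$ of $V$-torsion objects. By uniqueness of localisation triangles this reduces to verifying (a) that $\gam_V X \in \cat T_V$ and (b) that $\KHom(T, L_V X) = 0$ for every $T \in \cat T_V$.

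For (a), by \intref{Lemma}{lem:gamE} each term of $\gam_V X$ is a direct sum of indecomposables $I(\fq)$ with $\fq \cap R \in V$, and the equality $\gam_V I(\fq) = I(\fq)$ together with the definition of $\gam_V$ forces $I(\fq)_\fp = 0$ for every $\fp \notin V$; hence every term of $\gam_V X$ is $V$-torsion as an $R$-module. For any compact $C \in \cat T$, \intref{Lemma}{lem:an-iso} expresses $\KHom(C, \gam_V X)$ as $H^0(\Hom_A(\pres C, A) \otimes_A \gam_V X)$, and localisation at any $\fp \notin V$ is exact and commutes with $\otimes_A$ and with $H^0$, so it annihilates this. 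Thus $\KHom(C, \gam_V X)$ is $V$-torsion, giving (a).

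For (b), let $\cat J \subseteq \cat T$ denote the full subcategory of complexes that are termwise direct sums of indecomposables $I(\fq)$ with $\fq \cap R \in V$. This is a localising subcategory of $\cat T$ and, by (a), it is contained in $\cat T_V$. The termwise $\gam_V$ is a right adjoint to the inclusion $\cat J \hookrightarrow \cat T$: in each degree every map from a $V$-torsion injective to an injective $Y^n$ factors uniquely through the $V$-torsion direct summand $\gam_V Y^n$, and the same holds for homotopies. Since $\gam_V$ is idempotent one has $\gam_V L_V X = 0$, and adjunction then yields $\KHom_{\cat T}(T, L_V X) \cong \KHom_{\cat J}(T, 0) = 0$ for every $T \in \cat J$. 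It remains to show $\cat T_V \subseteq \cat J$. For this, $\cat T_V$ is generated by its compact objects, which correspond via the equivalence $\KInjc A \simeq \dbcat A$ to bounded complexes of finitely generated $A$-modules with $V$-torsion cohomology; by Gabriel's theorem \cite[Proposition~V.12]{Gabriel:1962} the $V$-torsion modules form a localising subcategory of $\Mod A$ closed under injective envelopes, so each such complex admits a termwise $V$-torsion injective resolution, which lies in $\cat J$.

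The main obstacle is the identification $\cat T_V = \cat J$, which combines compact generation of $\cat T_V$ — a general consequence of the framework of \cite{Benson/Iyengar/Krause:2008a} for a compactly generated triangulated category with an action of a commutative noetherian ring — with Gabriel's structure theory for injectives over a finite $R$-algebra. Given this identification, everything else follows from the termwise construction of $\gam_V$ and the elementary adjunction between $V$-torsion and arbitrary modules.
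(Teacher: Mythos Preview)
Your proof is correct and follows the same overall strategy as the paper---verify that $\gam_V X$ lies in $\cat T_V$ and that $L_V X$ lies in $\cat T_V^\perp$---but you are considerably more thorough on the second point. The paper passes in one line from the module-level vanishing $\Hom(\gam_V I, I/\gam_V I)=0$ to $\KHom(X', L_V X)=0$ for every $V$-torsion $X'$; taken literally, that module-level fact only yields the vanishing for complexes $X'$ with \emph{termwise} $V$-torsion injectives (your $\cat J$), and the identification $\cat T_V=\cat J$ is left implicit. You supply this step explicitly, via compact generation of $\cat T_V$ from \cite{Benson/Iyengar/Krause:2008a} together with Gabriel's structure theory, which is the natural way to close the gap.

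One small point deserves a word more: the assertion that a bounded complex $M\in\dbcat A$ with $V$-torsion cohomology ``admits a termwise $V$-torsion injective resolution'' is not quite immediate from closure of $V$-torsion modules under injective envelopes, since the terms of $M$ itself need not be $V$-torsion. It does follow, however, because the canonical filtration expresses $M$ as an iterated extension of its cohomology modules, so $\ires M$ lies in the triangulated subcategory of $\KInj A$ generated by injective resolutions of $V$-torsion modules, and that subcategory is contained in~$\cat J$.
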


\begin{proof}
Fix $X\in\KInj A$. Then $\gam_VX$ is $V$-torsion by construction. Moreover, for every injective $A$-module $I$ it is easy to verify that
\[
\Hom(\gam_V I,I/\gam_V I)=0\,.
\] 
Thus $\KHom(X',L_V X)=0$ for all  $V$-torsion $X'\in\KInj A$.
\end{proof}

\begin{lemma}
  For any $\fp$ in $\Spec R$ and $X$ in $\KInj A$ we have 
  $L_{Z(\fp)}X\cong X_\fp$.
\end{lemma}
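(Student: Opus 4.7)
The plan is to deduce the lemma from the explicit description in \intref{Example}{exa:vr}, upgraded to the level of $A$-complexes of injectives. From that example one already has $\gam_{Z(\fp)} X = \ker(X\to X_\fp)$ in each degree, and by \intref{Lemma}{lem:gamE} this kernel is a degreewise direct summand of $X$. What remains is to verify that the canonical morphism $X\to X_\fp$ is degreewise surjective, which together with the defining cokernel for $L_{Z(\fp)}$ produces the identification $L_{Z(\fp)} X \cong X_\fp$ as $A$-complexes, hence in $\KInj A$.

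Since both $(-)_\fp$ and $\gam_{Z(\fp)}$ commute with direct sums and each $X^n$ is a direct sum of indecomposable injectives $I(\fq)$, the surjectivity reduces to computing $I(\fq)_\fp$ in two cases, according to whether $\fq\cap R\subseteq\fp$ or not. If $\fq\cap R\not\subseteq\fp$, my plan is to pick $r\in(\fq\cap R)\setminus\fp$ and apply \intref{Lemma}{lem:gamE} with $V=V(r)$ to conclude $\gam_{V(r)}I(\fq) = I(\fq)$, whence $I(\fq)_r = 0$; as $(-)_\fp$ factors through $(-)_r$ because $r\notin\fp$, it follows that $I(\fq)_\fp = 0$, matching $\gam_{Z(\fp)}I(\fq) = I(\fq)$ given by \intref{Lemma}{lem:gamE}.

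If instead $\fq\cap R\subseteq\fp$, then \intref{Lemma}{lem:gamE} gives $\gam_{Z(\fp)}I(\fq)=0$, and I would show that every $s\in R\setminus\fp$ acts invertibly on $I(\fq)$, so that $I(\fq)\to I(\fq)_\fp$ is an isomorphism. Here $s$ is central in $A$ and avoids $\fq$; since $A/\fq$ is a prime ring, multiplication by $s$ is injective on $A/\fq$, and essentiality of the inclusion $A/\fq\hookrightarrow I(\fq)$ coming from Gabriel's classification promotes injectivity to all of $I(\fq)$. The image $s\cdot I(\fq)$ is then an injective, hence direct, summand of $I(\fq)$; its complement is an $s$-stable submodule on which $s$ vanishes, and so must be zero.

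The main obstacle is this second case: in the commutative setting the statement that elements outside $\fq$ act invertibly on $E_R(R/\fq)$ is routine, but the noncommutative version requires the combination of primeness of $A/\fq$ with essentiality of $A/\fq\hookrightarrow I(\fq)$ from Gabriel's structure theorem. Once these two cases are settled, they assemble into the degreewise short exact sequence $0\to\gam_{Z(\fp)}X\to X\to X_\fp\to 0$, and the identification $L_{Z(\fp)}X\cong X_\fp$ in $\KInj A$ follows.
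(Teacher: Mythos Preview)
Your proposal is correct and follows the same approach as the paper: the paper's proof consists entirely of the sentence ``This follows from \intref{Example}{exa:vr}'', and that example already records the exact sequence $0\to\gam_{Z(\fp)}X\to X\to X_\fp\to 0$, deducing surjectivity of $X\to X_\fp$ from \intref{Lemma}{lem:gamE} ``by the same token'' as in the $V(r)$ case. What you have written is a careful unpacking of that phrase---splitting into indecomposable injectives and handling the two cases $\fq\cap R\not\subseteq\fp$ and $\fq\cap R\subseteq\fp$ separately---so the route is identical, just more explicit. Your treatment of the second case (invertibility of $s\notin\fp$ on $I(\fq)$) is a genuine detail the paper suppresses; note that the final step works simply because any complement $C$ of $sI(\fq)$ is an $A$-submodule, hence $s$-stable, so $sC\subseteq sI(\fq)\cap C=0$ and injectivity of $s$ forces $C=0$ (equivalently, just invoke indecomposability of $I(\fq)$).
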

\begin{proof}
This follows from \intref{Example}{exa:vr}.
\end{proof}

For an object $X$ in $\KInj A$, we write $\Loc(X)$ for the smallest
localising subcategory of $\KInj A$ that contains $X$.

\begin{lemma}
\label{lem:lgp}
Let $V\subseteq \Spec R$ be specialisation closed. For any $X$ in
$\KInj A$, the $A$-complexes $\gam_VX$ and $L_VX$ are in
$\Loc(X)$.
\end{lemma}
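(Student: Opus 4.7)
The plan is to establish $\gam_V X\in\Loc(X)$, from which the statement for $L_V X$ follows immediately via the localisation triangle \eqref{eq:locV}. The argument reduces from a general specialisation closed $V$ to the case $V=V(\mathfrak{a})$ for a finitely generated ideal $\mathfrak{a}\subseteq R$, and ultimately to the principal case $V=V(r)$.

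For the base case $V=V(r)$ with $r\in R$, \intref{Example}{exa:vr} puts the triangle \eqref{eq:locV} in the form $\gam_{V(r)}X\to X\to X_r\to$, so it suffices to show $X_r\in\Loc(X)$. Because $A$ is noetherian, localisation preserves injectivity, so $X_r$ is a complex of injective $A$-modules; and it is identified with the mapping telescope $\hocolim(X\xrightarrow{r}X\xrightarrow{r}\cdots)$ computed in $\KInj A$. This sequential homotopy colimit visibly lies in $\Loc(X)$, and hence so does $\gam_{V(r)}X$. For $V=V(\mathfrak{a})$ with $\mathfrak{a}=(r_1,\dots,r_n)$, I will verify the identity $\gam_{V(\mathfrak{a})}=\gam_{V(r_1)}\circ\cdots\circ\gam_{V(r_n)}$ of endofunctors on $\Mod A$ — both describe elements annihilated by some power of $\mathfrak{a}$ — and then induct on $n$: if $Y\colonequals\gam_{V(r_1,\dots,r_{n-1})}X$ lies in $\Loc(X)$, applying the base case with parameter $r_n$ to $Y$ gives $\gam_{V(\mathfrak{a})}X=\gam_{V(r_n)}Y\in\Loc(Y)\subseteq\Loc(X)$.

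For a general specialisation closed $V$, let $\mathcal{I}$ denote the collection of finitely generated ideals $\mathfrak{a}\subseteq R$ with $V(\mathfrak{a})\subseteq V$. The relation $\mathfrak{a}\leq\mathfrak{b}\iff V(\mathfrak{a})\subseteq V(\mathfrak{b})$ makes $\mathcal{I}$ directed, with joins supplied by intersections, using $V(\mathfrak{a}\cap\mathfrak{b})=V(\mathfrak{a})\cup V(\mathfrak{b})\subseteq V$. Since $R$ is noetherian, every element of any $V$-torsion $A$-module has a finitely generated annihilator, giving the degree-wise identification
\[
\gam_V X\;=\;\colim_{\mathfrak{a}\in\mathcal{I}}\gam_{V(\mathfrak{a})}X
\]
with transition maps that are split inclusions of injective $A$-modules (by \intref{Lemma}{lem:gamE} applied at each finite stage). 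Since $A$ is noetherian, this filtered colimit is again a complex of injectives, and it represents the homotopy colimit of the system in $\KInj A$. Each $\gam_{V(\mathfrak{a})}X$ lies in $\Loc(X)$ by the previous step, so $\gam_V X\in\Loc(X)$ as required.

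The step I expect to be the main obstacle is the last one: justifying that the filtered colimit above genuinely realises the homotopy colimit in $\KInj A$, rather than merely being the underlying colimit in $\Mod A$. This is standard in the Benson--Iyengar--Krause framework for $R$-actions on compactly generated triangulated categories, and it can be reduced, via cofinal subsystems or the dg-enhancement of $\KInj A$, to the sequential case where the mapping telescope description makes the identification transparent; but writing this carefully requires more bookkeeping than the algebraic identities in the first two steps.
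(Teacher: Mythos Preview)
Your argument is essentially correct and takes a genuinely different route from the paper's. The paper gives a two-line proof by citation: it invokes the local-to-global principle of \cite[Theorem~3.1]{Benson/Iyengar/Krause:2011a} together with \cite[Theorem~6.9]{Stevenson:2011}, which furnishes the needed action of a tensor-triangulated category (namely $\dcat R$) on $\KInj A$; in that framework $\gam_V X$ is obtained from $X$ by acting with $\gam_V R\in\Loc_{\dcat R}(R)$, so $\gam_V X\in\Loc(X)$ drops out. Your approach, by contrast, is constructive and self-contained: the telescope description $X_r\cong\hocolim(X\xrightarrow{r}X\xrightarrow{r}\cdots)$ for the base case and the factorisation $\gam_{V(\mathfrak a)}=\gam_{V(r_1)}\circ\cdots\circ\gam_{V(r_n)}$ are entirely elementary and make explicit how $\gam_{V(\mathfrak a)}X$ is built from $X$. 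That is what your route buys; the paper's route buys brevity and avoids the delicate colimit step.

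On that step: you are right to flag the filtered-colimit identification as the crux, and your fix via the dg-enhancement of $\KInj A$ is valid---filtered colimits of complexes of injectives over a noetherian ring model filtered homotopy colimits, and localising subcategories of a compactly generated category with such a model are closed under these. Your other proposed fix, reducing ``via cofinal subsystems \ldots\ to the sequential case'', does not work as stated: a directed set need not admit a cofinal subset of countable cofinality, so one cannot in general reduce to a mapping telescope. If you want to avoid the enhancement, the cleanest alternative is precisely the tensor-action argument the paper cites.
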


\begin{proof}
This follows from the local-to-global principle discussed in \cite{Benson/Iyengar/Krause:2011a}. More specifically, one combines   \cite[Theorem~3.1]{Benson/Iyengar/Krause:2011a} with \cite[Theorem~6.9]{Stevenson:2011}.
\end{proof}

 \begin{lemma}
 \label{lem:lipman}
 Let $V\subseteq \Spec R$ be specialisation closed. If an $A$-complex $X$ of injective $A$-modules is acyclic, then so are the complexes
 $\gam_VX$ and $L_VX$.
 \end{lemma}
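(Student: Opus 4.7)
The strategy is to reduce progressively: from an arbitrary specialisation closed $V$, to $V = V(\mathfrak{a})$ with $\mathfrak{a}$ finitely generated, and finally to $V = V(r)$ for a single element $r \in R$. Note first that from the degreewise split short exact sequence $0 \to \gam_V X \to X \to L_V X \to 0$, the long exact cohomology sequence combined with the acyclicity of $X$ gives $H^{i}(\gam_V X) \cong H^{i-1}(L_V X)$, so it suffices to show that one of the two complexes is acyclic.

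For the principal case $V = V(r)$, \intref{Example}{exa:vr} identifies $L_{V(r)}X$ with $X_r$, which is acyclic since localisation at $\{r^n\}_{n \geqslant 0}$ is exact; hence $\gam_{V(r)}X$ is acyclic as well. For $V = V(\mathfrak{a})$ with $\mathfrak{a} = (r_1,\ldots,r_n)$ finitely generated, one verifies the identity
\[
\gam_{V(\mathfrak{a})} = \gam_{V(r_1)} \circ \cdots \circ \gam_{V(r_n)}
\]
as endofunctors on $\Mod A$. Since \intref{Lemma}{lem:gamE} ensures that each $\gam_{V(r_i)}$ sends a complex of injective $A$-modules to another such (as a direct summand), the composition stays within $\bfK(\Inj A)$, and an induction on $n$ based on the principal case shows that $\gam_{V(\mathfrak{a})} X$ is acyclic.

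For the general case, I would use noetherianity of $R$: for each $x \in \gam_V M$ the annihilator $\operatorname{Ann}_R(x)$ is finitely generated, and the defining property of $\gam_V$ (together with the characterisation $x \in \gam_V M \iff V(\operatorname{Ann}_R(x)) \subseteq V$) gives $V(\operatorname{Ann}_R(x)) \subseteq V$. Hence $\gam_V M$ is the filtered union of $\gam_{V(\mathfrak{a})} M$ as $\mathfrak{a}$ ranges over finitely generated ideals with $V(\mathfrak{a}) \subseteq V$. Applying this degreewise to $X$ and using that filtered colimits of modules commute with cohomology, the acyclicity of each $\gam_{V(\mathfrak{a})}X$ from the previous step propagates to $\gam_V X$.

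The main technical point is verifying the filtered colimit description $\gam_V = \colim \gam_{V(\mathfrak{a})}$, which is where noetherianity of $R$ is essential (so that $\operatorname{Ann}_R(x)$ is finitely generated). The remaining ingredients are routine: exactness of localisation at the principal level, the composition identity for $\gam_{V(\mathfrak{a})}$, and the injectivity preservation afforded by \intref{Lemma}{lem:gamE}.
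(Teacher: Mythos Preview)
Your proof is correct but takes a genuinely different, more elementary route than the paper. The paper's argument is essentially a one-liner: it invokes \intref{Lemma}{lem:lgp}, which asserts that $\gam_V X$ and $L_V X$ lie in $\Loc(X)$ (this in turn rests on the local-to-global principle from \cite{Benson/Iyengar/Krause:2011a} and \cite{Stevenson:2011}), and then observes that $\KacInj A$ is a localising subcategory of $\KInj A$, so $\Loc(X)\subseteq\KacInj A$ whenever $X$ is acyclic. Your argument bypasses this machinery entirely and works directly with the module-theoretic description of $\gam_V$: you settle the principal case $V=V(r)$ via exactness of localisation, build up to $V=V(\mathfrak a)$ using the composition identity $\gam_{V(\mathfrak a)}=\gam_{V(r_1)}\circ\cdots\circ\gam_{V(r_n)}$, and then reach arbitrary $V$ through the filtered union $\gam_V=\bigcup_{V(\mathfrak a)\subseteq V}\gam_{V(\mathfrak a)}$ together with exactness of filtered colimits. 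This is longer but entirely self-contained and avoids the tensor-triangular support theory the paper imports; on the other hand, the paper's approach is more in keeping with the ambient framework and delivers the stronger statement of \intref{Lemma}{lem:lgp} en route.
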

 
 \begin{proof}
   The subcategory $\KacInj A$ of $\KInj A$ is localising, hence when  $X$ is acyclic so are the complexes in $\Loc(X)$. It remains to recall \intref{Lemma}{lem:lgp}.
 \end{proof}

For an object $X$ in $\KInj A$ and $\fp\in\Spec R$ the \emph{local cohomology}  at $\fp$ is
\[
\gam_\fp X:=\gam_{V(\fp)}(X_{\fp})\,.
\]
The following observation will be useful.

\begin{lemma}
\label{lem:bb}
For any $X$ in $\KInj A$, the complex $\gam_{\fp}X$ is a subquotient
of $X$. In particular, if $X^{i}=0$ for some $i\in\mathbb Z$ then $(\gam_{\fp}X)^{i}=0$  as well. \qed
\end{lemma}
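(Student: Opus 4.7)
The plan is to chase through the two constructions that define $\gam_{\fp}X$, namely first the localisation $X_{\fp}$ and then the torsion functor $\gam_{V(\fp)}$, and observe that each step is realised degree-wise by a split short exact sequence of $A$-modules. The key is that the relevant subcategory splittings happen at the level of injective objects, not merely up to homotopy.

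First I would recall from \intref{Example}{exa:vr} that $X_{\fp}=L_{Z(\fp)}X$ fits into the short exact sequence of $A$-complexes
\[
0\lra \gam_{Z(\fp)}X\lra X\lra X_{\fp}\lra 0\,.
\]
Since $X$ consists of injective $A$-modules, \intref{Lemma}{lem:gamE} says that in each degree $\gam_{Z(\fp)}X^{i}$ is a direct summand of $X^{i}$; in particular $(X_{\fp})^{i}$ is (isomorphic to) a direct summand of $X^{i}$. Thus $X_{\fp}$ is degree-wise a quotient of $X$.

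Next, since $X_{\fp}$ is again a complex of injective $A$-modules, a second application of \intref{Lemma}{lem:gamE} shows that $\gam_{V(\fp)}(X_{\fp})^{i}$ is a direct summand of $(X_{\fp})^{i}$ for each $i$. Composing the two realisations, $\gam_{\fp}X=\gam_{V(\fp)}(X_{\fp})$ is, in each degree, a submodule of a quotient of $X$; this is precisely the statement that $\gam_{\fp}X$ is a subquotient of $X$.

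The ``in particular'' assertion is then immediate: if $X^{i}=0$, then $(X_{\fp})^{i}=0$, and hence $(\gam_{\fp}X)^{i}=0$. There is no real obstacle here; the whole argument is a bookkeeping exercise in which the only input beyond definitions is the structural fact, already proved in \intref{Lemma}{lem:gamE}, that $\gam_{V}$ respects injectivity degree-wise.
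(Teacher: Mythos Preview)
Your argument is correct and is exactly the reasoning the paper has in mind: the lemma carries only a \qed\ in the paper, so no proof is written out, but the intended justification is precisely the degree-wise splitting supplied by \intref{Lemma}{lem:gamE} together with \intref{Example}{exa:vr}, which you have unpacked accurately.
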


\begin{remark}
\label{rem:kac}
The triangulated category $\KacInj A$ is compactly generated and $R$-linear, so has its own localisation functors for a specialisation closed subset $V$ of $\Spec R$. It follows from \intref{Lemma}{lem:lipman} that these are just restrictions of the corresponding functors on  $\KInj A$. 

The triangulated category $\dcat A$ is also compactly generated and $R$-linear. However the embedding $\ires\colon \dcat A\to \KInj A$ is not compatible with the localisation functors; in other words, for a K-injective complex $X$,  the complex $\gam_VX$ need not be K-injective; see \cite{Chen/Iyengar:2010}.
On the other hand, it is easy to verify that these functors are compatible with the restriction functor $\dcat A\to \dcat R$.
\end{remark}

\begin{remark}
\label{rem:s-local}
Fix a $\fp$ in $\Spec R$ and consider the diagram of exact functors.
\begin{equation*}
\begin{tikzcd}[row sep=large]
  \KacInj A \arrow[hookrightarrow,yshift=-.75ex,swap]{rr}{\mathrm{incl}}
  \arrow[twoheadrightarrow,xshift=-.75ex,swap]{d}{(-)_\fp}&& \KInj A
  \arrow[twoheadrightarrow,yshift=.75ex,swap]{ll}{\bfs}
    \arrow[twoheadrightarrow,xshift=-.75ex,swap]{d}{(-)_\fp}\\ 
  \KacInj{A_\fp}
  \arrow[hookrightarrow,yshift=-.75ex,swap]{rr}{\mathrm{incl}}
  \arrow[tail,xshift=.75ex,swap]{u}{\mathrm{res}}&& \KInj{A_\fp}
  \arrow[twoheadrightarrow,yshift=.75ex,swap]{ll}{\bfs_\fp}
  \arrow[tail,xshift=.75ex,swap]{u}{\mathrm{res}}
\end{tikzcd}
\end{equation*}
It is clear that the two compositions of right adjoints, from the bottom left to the top right, coincide. It follows that the composition of the corresponding left adjoint functors  are isomorphic: $(\bfs X)_\fp \cong \bfs_\fp(X_\fp)$ for  $X$ in $\KInj A$.

\end{remark}

\subsection*{Support}
Let  $\cat T$ be $\KInj A$ or $\KacInj A$. Specialising the definition from \cite{Benson/Iyengar/Krause:2008a} to our context, we introduce the \emph{support} of an object $X$ in $\cat T$ to be the subset
\[
\supp_R X\colonequals \{\fp\in\Spec R\mid \gam_\fp X\ne 0\}\,.
\]
It follows from \intref{Remark}{rem:kac} that the support an object in $\KacInj A$ is the same as its support when we view it as an object in $\KInj A$.

The \emph{support} of $\cat T$ is the subset of $\Spec R$ defined by
\[
\supp_R\cat T\colonequals \bigcup_{X\in{\cat T}^{\mathrm c}} \supp_R X\,.
\]
Here are some alternative characterisations of  support for acyclic complexes.

\begin{proposition}
  Let $A$ be a finite $R$-algebra, fix $X\in \KacInj A$ and $\fp$ in
  $\Spec R$. The following conditions are equivalent:
\begin{enumerate}[\quad\rm(1)]
\item
The prime $\fp$ is not in $\supp_RX$.
\item
The complex $\gam_\fp X$ is contractible.
\item
The $A$-module $\gam_\fp(\Omega^i(X))$ is injective for each (equivalently, some) integer $i$.
\end{enumerate}
\end{proposition}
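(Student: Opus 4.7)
The plan is to handle (1) $\iff$ (2) by unwinding the definition of support, and (2) $\iff$ (3) by identifying the syzygies of $\gam_\fp X$ with $\gam_\fp$ applied to the syzygies of $X$.

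First I would verify that $\gam_\fp X$ lies in $\KacInj A$. Acyclicity is exactly \intref{Lemma}{lem:lipman}. For the terms, note that applying \intref{Lemma}{lem:gamE} to the specialisation closed subset $Z(\fp)$ shows $X^i_\fp = L_{Z(\fp)}X^i$ is a direct summand of $X^i$, and applying it to $V(\fp)$ shows $\gam_\fp X^i = \gam_{V(\fp)}X^i_\fp$ is a direct summand of $X^i_\fp$. Hence each $\gam_\fp X^i$ is injective as an $A$-module. Given that $\gam_\fp X\in\KacInj A$, and recalling that the zero objects in this category are precisely the contractible complexes, (1) $\iff$ (2) follows immediately from the definition of $\supp_R$ (and from \intref{Remark}{rem:kac} guaranteeing that it does not matter whether support is computed in $\KacInj A$ or $\KInj A$).

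For (2) $\iff$ (3), the key step is to establish
\[
\syz^i(\gam_\fp X) \;\cong\; \gam_\fp(\syz^i X) \qquad\text{for every } i\in\mathbb Z.
\]
Since $\gam_\fp X$ sits inside $X_\fp$ as the $V(\fp)$-torsion subcomplex, an element $x\in\gam_\fp X^i$ satisfies $d(x)=0$ in $\gam_\fp X^{i+1}$ iff $d(x)=0$ in $X^{i+1}_\fp$. Thus $\syz^i(\gam_\fp X) = \gam_\fp X^i \cap \syz^i(X)_\fp$, which is precisely the $V(\fp)$-torsion submodule of $\syz^i(X)_\fp$, namely $\gam_\fp(\syz^i X)$. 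Once this identification is in hand, the standard fact for an acyclic complex of injectives—that contractibility is equivalent to some syzygy being injective, equivalently all of them, via splitting and splicing the short exact sequences $0\to\syz^{i+1}\to(\gam_\fp X)^i\to\syz^i\to 0$ in both directions—delivers (2) $\iff$ (3), including the ``each versus some'' parenthetical.

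The only mildly subtle step is the syzygy identification, where one must be careful that $\gam_\fp$ arises from the exact functor $(-)_\fp$ followed by the left exact $\gam_{V(\fp)}$, and that the kernel computation is done inside $X^i_\fp$. Beyond that, the argument is purely formal splicing.
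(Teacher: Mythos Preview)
Your proof is correct and follows essentially the same route as the paper's: both arguments reduce (1)$\Leftrightarrow$(2) to the standard fact that an acyclic complex of injectives is zero in the homotopy category precisely when it is contractible, and both establish (2)$\Leftrightarrow$(3) via the identification $\Omega^i(\gam_\fp X)\cong \gam_\fp(\Omega^i X)$. The only difference is cosmetic---the paper obtains the syzygy identification by invoking left-exactness of $\gam_\fp$ (as a composite of localisation with the left-exact $\gam_{V(\fp)}$), whereas you verify it by a direct element-level intersection inside $X^i_\fp$; these are the same computation.
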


\begin{proof}
An acyclic complex of injective modules is zero in $\KInj A$ if and only if it is contractible, if and only if each, equivalently, one of its syzygy modules is injective. From this we get that (1)$\Leftrightarrow$(2) and also that these conditions are equivalent to $\Omega^i(\gam_\fp X)$ injective for each, equivalently, some, $i$. It remains to note that since the functor $\gam_\fp$ is left-exact, and preserves acyclicity of complexes in $\KInj A$, one gets
\[
\Omega^i(\gam_\fp X) \cong \gam_\fp \Omega^i( X) \quad\text{for each integer $i$.}
\]
This completes the proof.
\end{proof}

The following observation concerning generators for $\KacInj A$ is well-known.

\begin{lemma}
\label{le:kac-gen}
The compact objects in $\KacInj A$ are direct summands of objects of the form $\bfs C$, where $C$ is a compact object in $\KInj A$.
\end{lemma}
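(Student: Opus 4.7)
The plan is to apply Neeman's theorem identifying the compact objects of a compactly generated triangulated category as the thick closure of any set of compact generators. I proceed in three steps.

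First, I would verify that $\bfs$ carries compact objects of $\KInj A$ to compact objects of $\KacInj A$. Since $\bfs$ is left adjoint to the inclusion $\KacInj A\hookrightarrow\KInj A$ (part of the recollement \eqref{eq:recollement}), it suffices to observe that this inclusion preserves arbitrary coproducts. This holds because coproducts in $\KInj A$ are computed degreewise (using that $A$ is noetherian so $\Inj A$ is closed under direct sums), and a direct sum of acyclic complexes is acyclic.

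Next, I would show that the class $\mathcal{G}\colonequals\{\bfs C : C\in\KInjc A\}$ generates $\KacInj A$. Suppose $Y\in\KacInj A$ satisfies $\KHom(\bfs C,Y)=0$ for every compact $C\in\KInj A$. By the adjunction $\bfs\dashv\mathrm{incl}$ and the fact that the inclusion is fully faithful, this gives $\KHom(C,Y)=0$ for every compact $C\in\KInj A$. Since $\KInj A$ is compactly generated, this forces $Y=0$ in $\KInj A$, hence in $\KacInj A$.

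By \intref{Remark}{rem:kac} the category $\KacInj A$ is compactly generated, so Neeman's theorem identifies $(\KacInj A)^{\mathrm c}$ with the thick subcategory of $\KacInj A$ generated by $\mathcal{G}$. The class $\mathcal{G}$ is already closed under shifts and under cones: since $\bfs$ is triangulated and $\KInjc A$ is thick in $\KInj A$, a triangle $\bfs C_1\to\bfs C_2\to Z\to$ has $Z\cong\bfs(\mathrm{cone}(C_1\to C_2))\in\mathcal{G}$. It therefore remains to check the standard fact that the thick closure of a class $\mathcal{G}$ already closed under shifts and cones coincides with its closure under direct summands. The only point to verify is that summands of $\mathcal{G}$ are closed under cones: given $X,Y$ summands of $G_1,G_2\in\mathcal{G}$ with complements $X',Y'$, and a morphism $f\col X\to Y$, extend $f$ by zero to $\tilde f\col G_1\to G_2$; then $\mathrm{cone}(\tilde f)\cong \mathrm{cone}(f)\oplus \Sigma X'\oplus Y'$ lies in $\mathcal{G}$, so $\mathrm{cone}(f)$ is a summand of an object of $\mathcal{G}$. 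Combining these observations, every compact object of $\KacInj A$ is a direct summand of some $\bfs C$ with $C\in\KInjc A$, which is the statement of the lemma. The only delicate point is the last step (thick closure equals summand closure when the class is closed under shifts and cones), but this is an elementary verification once one extends maps between summands to maps between the ambient objects by zero.
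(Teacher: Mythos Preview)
Your argument has a genuine gap in the step where you claim that $\mathcal{G}=\{\bfs C:C\in\KInjc A\}$ is closed under cones. You assert that a triangle $\bfs C_1\to\bfs C_2\to Z\to$ satisfies $Z\cong\bfs(\cone(C_1\to C_2))$, but this presupposes that every morphism $\bfs C_1\to\bfs C_2$ in $\KacInj A$ is of the form $\bfs(h)$ for some $h\colon C_1\to C_2$ in $\KInj A$. The functor $\bfs$ is \emph{not} full on compacts: by adjunction a morphism $f\colon\bfs C_1\to\bfs C_2$ corresponds to a map $g\colon C_1\to\bfs C_2$, and $g$ lifts along the unit $C_2\to\bfs C_2$ only when its image in $\KHom(C_1,\Sigma\,\ipres C_2)$ vanishes. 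For a concrete failure take $A=k[x]/(x^2)$, $C_1=\ires k$, $C_2=\Sigma^{-1}\ires k$: then $\KHom(C_1,C_2)\cong\Ext_A^{-1}(k,k)=0$ while $\KHom(\bfs C_1,\bfs C_2)\cong\tExt_A^{-1}(k,k)\cong k$, so the nonzero morphism has no lift.

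The conclusion that $\mathcal{G}$ is closed under cones is in fact correct, but establishing it is precisely the content of the stronger form of Neeman's theorem that the paper invokes. In the recollement \eqref{eq:recollement} the functor $\bfs$ realises $\KacInj A$ as the Verdier quotient $\KInj A/\Loc(\ires A)$, and since $\Loc(\ires A)$ is generated by the compact object $\ires A$, \cite[Theorem~2.1]{Neeman:1992} yields that the induced functor $\KInjc A/\Thick(\ires A)\to(\KacInj A)^{\mathrm c}$ is fully faithful with image dense up to direct summands. Fullness at the level of the Verdier quotient is what allows one to replace an arbitrary morphism $\bfs C_1\to\bfs C_2$ by a roof $C_1\xleftarrow{s}C_1'\to C_2$ with $\cone(s)\in\Thick(\ires A)$, whence the cone is $\bfs(\cone(C_1'\to C_2))\in\mathcal{G}$. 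Once you grant this, your argument reduces to the paper's proof, which simply cites Neeman's theorem for the whole statement rather than only for the identification $\cat T^{\mathrm c}=\Thick(\mathcal G)$.
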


\begin{proof}
  The functor $\bfs$ is left adjoint to the inclusion
  $\KacInj A\subset \KInj A$, so it is essentially surjective; it also
  preserves compactness for the inclusion preserves direct sums. It
  follows that up to direct summands all compact objects of
  $\KacInj A$ are in the image of $\bfs$; see
  \cite[Theorem~2.1]{Neeman:1992}.
\end{proof}

A noetherian ring $A$ is \emph{regular} if each $M\in \mod A$ has finite projective dimension; equivalently, each $M$ in $\dbcat A$ is perfect.  We say that $A$ is \emph{singular} to mean that it is not regular.   When $A$ is a finite $R$-algebra its \emph{regular locus} will mean the collection of primes $\fp\in\Spec R$ such that $A_\fp$ is regular. It's complement in $\Spec R$ is the \emph{singular locus}.

\begin{corollary}
\label{cor:suppKacinj}
The  singular locus of $A$ equals $\supp_R\KacInj A$. 
\end{corollary}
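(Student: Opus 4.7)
The plan is to prove the two inclusions separately, invoking the equivalence $\dsing(B)\simeq\KacInj{B}^{\mathrm c}$ up to direct summands from \cite[Corollary~5.4]{Krause:2005}, together with the compatibility $(\bfs X)_\fp\cong\bfs_\fp(X_\fp)$ from \intref{Remark}{rem:s-local}.

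One inclusion is essentially immediate: if $\fp\in\supp_R\KacInj A$, take a compact $X\in\KacInj A$ with $\gam_\fp X=\gam_{V(\fp)}(X_\fp)\ne 0$. Since localization of $A$-modules preserves injectivity (as $A$ is finite over $R$), $X_\fp$ is nonzero in $\KacInj{A_\fp}$, hence $\dsing(A_\fp)\ne 0$, i.e.\ $A_\fp$ is singular.

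For the reverse inclusion, fix $\fp$ in the singular locus. The ring $A_\fp$ is semi-local, being finite over the local ring $R_\fp$, and standard structure theory for semi-local noetherian rings (minimal projective resolutions together with the fact that simples of bounded projective dimension force global dimension to be bounded) furnishes a simple $A_\fp$-module $\bar N$ of infinite projective dimension. Since $\mathrm{Jac}(A_\fp)\supseteq\fp R_\fp$ annihilates $\bar N$, it is finitely presented over the finite-dimensional $k(\fp)$-algebra $A_\fp/\fp A_\fp$, and clearing denominators in a presentation yields a finitely generated $A/\fp A$-module $M$ with $M_\fp\cong\bar N$. Set $X\colonequals\bfs\ires M\in\KacInj{A}^{\mathrm c}$; I claim $\fp\in\supp_R X$. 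On the one hand, $\supp_R X\subseteq V(\fp)$: for $\fq\in\Spec R$ with $\fp\not\subseteq\fq$, pick $r\in\fp\setminus\fq$; then $r$ annihilates $M$ and is a unit in $R_\fq$, so $M_\fq=0$ and \intref{Remark}{rem:s-local} (applied to the bounded-below resolution $\ires M$, which remains K-injective after localization) yields $X_\fq\cong\bfs_\fq\ires_\fq M_\fq=0$, whence $\gam_\fq X=0$; equivalently, $\supp_{R_\fp}X_\fp\subseteq V(\fp R_\fp)=\{\fp R_\fp\}$. On the other hand, $X_\fp\cong\bfs_\fp\ires_\fp\bar N$ is nonzero, since $\bar N$ is not perfect in $\dbcat{A_\fp}$ and the equivalence above detects non-perfects. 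Because a nonzero compact object in a compactly generated $R_\fp$-linear triangulated category has nonempty support (by the local-to-global principle underlying \intref{Lemma}{lem:lgp}), we conclude $\supp_{R_\fp}X_\fp=\{\fp R_\fp\}$, so $\gam_\fp X\ne 0$ and $\fp\in\supp_R\KacInj A$.

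The main obstacle is the final step, extracting $\gam_\fp X\ne 0$ from $X_\fp\ne 0$; what makes it work is choosing $M$ to be $\fp$-annihilated so that $\supp_R X$ is confined to $V(\fp)$, combined with the nonvanishing-of-support principle for nonzero compacts.
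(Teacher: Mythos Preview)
Your argument is correct and follows essentially the same route as the paper's proof: both directions hinge on the compatibility $(\bfs\ires M)_\fp\cong\bfs_\fp(\ires_\fp M_\fp)$ from \intref{Remark}{rem:s-local}, and for the harder inclusion both produce a $V(\fp)$-torsion $M\in\mod A$ with $M_\fp$ not perfect. The paper simply asserts such an $M$ exists; you construct it explicitly by lifting a simple $A_\fp$-module of infinite projective dimension, which is a legitimate way to fill in that step (your ``standard structure theory'' claim is justified: for the semiperfect ring $A_\fp$ one has $\pdim_{A_\fp}(A_\fp/J)=\pdim_{A_\fp^{\mathrm{op}}}(A_\fp/J)$ since both equal $\sup\{i:\Tor_i^{A_\fp}(A_\fp/J,A_\fp/J)\ne 0\}$ via minimal resolutions, so finiteness of this forces regularity).

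The only notable difference is in the last step. The paper observes directly that when $M$ is $V(\fp)$-torsion one has $\gam_\fp\bfs(\ires M)\cong\bfs(\ires M)_\fp$, because the localised object is already $\fp R_\fp$-torsion; this is immediate and avoids any appeal to the local-to-global principle. Your route---confining $\supp_R X$ to $V(\fp)$ and then invoking nonemptiness of support for nonzero compacts---reaches the same conclusion but is a bit more circuitous. Either way works.
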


\begin{proof}
By \intref{Lemma}{le:kac-gen} the support of $\KacInj A$ is the union of the supports of $\bfs(\ires M)$, for $M\in \dbcat A$. For any $\fp\in \Spec R$  one has isomorphisms
\[
\bfs(\ires M)_\fp \cong \bfs_\fp( (\ires M)_\fp) \cong \bfs_\fp(\ires (M_\fp))
\]
in $\KInj{A_\fp}$, where the first one is by \intref{Remark}{rem:s-local} and the second one is standard. Thus $\bfs(\ires M)_\fp\cong 0$ if and only if $M_\fp$ is perfect in $\dcat{A_\fp}$.  Consequently, if $\fp$ is in the regular locus of $A$, then $\bfs(\ires M)_\fp=0$ for each  $M$ in $\dbcat A$, and hence $\fp$ is not in the support of $\KacInj A$.

Conversely, if $A_\fp$ is not regular, then there exists an $M\in \mod A$ such that $M_\fp$ is not perfect; one can choose $M$ to be $V(\fp)$-torsion. Then $\gam_{\fp} \bfs(\ires M) \cong \bfs(\ires M)_\fp$ is nonzero, so $\fp$ is in the support of $\KacInj A$.
\end{proof}

\section{Matlis duality and Gorenstein categories}
This section is about avatars of Matlis duality in various homotopy categories we have been dealing with.  To set the stage for the discussion, it helps to consider a general, compactly generated, triangulated category $\cat T$ with the action of a commutative noetherian ring $R$, in the sense of \cite{Benson/Iyengar/Krause:2008a}.  Fix an injective $R$-module $I$. For each compact object $C$ in $\cat T$ the functor
\[
X\longmapsto \Hom_{R}(\Hom_{\cat T}(C,X),I)
\]
from $\cat T$ to $\Mod R$ is homological and takes coproducts to products. The Brown Representability Theorem implies that it is representable: There is an object, say $T_I(C)$, in $\cat T$ and an isomorphism of functors 
\[
\Hom_R(\Hom_{\cat T}(C,-),I) \cong \Hom_{\cat T}(-,T_I(C))\,.
\]
 In this way the  assignment $C \times I \mapsto T_I(C)$ yields a functor
\[
T \colon  {\cat T}^{\mathrm c} \times \Inj R \lra {\cat T}\,.
\]

Borrowing terminology from \cite{Dwyer/Greenlees/Iyengar:2005} we call the functor $T_I(-)$ the \emph{Matlis lift} of $I$ to $\cat T$. In what follows, for $\fp$ in $\Spec R$ we write $T_{\fp}(-)$ for  $T_{I(\fp)}(-)$, where $I(\fp)$ is the injective hull of the $R$-module $R/\fp$.

Let now $A$ be a finite $R$-algebra as before. The description of the Matlis lifts of
injective $R$-modules to the $R$-linear category $\dcat A$ is straightforward.

\begin{proposition}
\label{pro:Matlis-D}
The Matlis lift to $\dcat A$ of an injective $R$-module $I$ is given
by the functor $C\mapsto \RHom_R(A,I)\lotimes_AC$.
\end{proposition}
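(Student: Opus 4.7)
The plan is to verify directly that $\RHom_R(A, I) \lotimes_A C$ represents the functor $X \mapsto \Hom_R(\Hom_{\dcat A}(C, X), I)$ on $\dcat A$. Since $I$ is an injective $R$-module, $\RHom_R(A, I) = \Hom_R(A, I)$, and this is an $A$-bimodule via the two-sided action of $A$ on itself. Its right $A$-action is consumed by the tensor product with $C$, while the surviving left action makes $\RHom_R(A, I) \lotimes_A C$ an object of $\dcat A$.

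For a compact, i.e.\ perfect, object $C$ and any $X \in \dcat A$, I would compute via the chain
\begin{align*}
\RHom_A\bigl(X,\, \Hom_R(A, I) \lotimes_A C\bigr)
&\iso \RHom_A\bigl(X,\, \RHom_R(\RHom_A(C, A), I)\bigr) \\
&\iso \RHom_R\bigl(\RHom_A(C, A) \lotimes_A X,\, I\bigr) \\
&\iso \RHom_R\bigl(\RHom_A(C, X),\, I\bigr).
\end{align*}
The first isomorphism is the duality $\RHom_R(A, I) \lotimes_A C \iso \RHom_R(\RHom_A(C, A), I)$, valid for $C$ perfect: verify it for $C = A$, where both sides reduce to $\Hom_R(A, I)$, and extend by d\'evissage over $\Thick(A) = {\dcat A}^{\mathrm c}$. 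The second is the tensor-hom adjunction along $R$, with $\RHom_A(C, A)$ viewed as a right $A$-module. The third is the standard perfect-object identity $\RHom_A(C, A) \lotimes_A X \iso \RHom_A(C, X)$. Passing to $H^0$ and using injectivity of $I$ over $R$ to commute $\Hom_R(-, I)$ with cohomology yields the characterising natural isomorphism $\Hom_{\dcat A}(X, T_I(C)) \cong \Hom_R(\Hom_{\dcat A}(C, X), I)$, so $T_I(C) \cong \RHom_R(A, I) \lotimes_A C$ by uniqueness of representing objects.

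The only real obstacle is careful bookkeeping of the two commuting $A$-actions on each of $\Hom_R(A, I)$ and $\RHom_A(C, A)$, ensuring that the side used in each successive tensor and Hom lines up correctly. Otherwise the argument is purely formal; in particular, it invokes none of the Gorenstein hypotheses, only finiteness of $A$ over $R$ (so that the relevant Homs and tensors are well behaved) and the compactness of $C$.
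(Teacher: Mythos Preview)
Your proof is correct and follows essentially the same route as the paper: both verify the representing property via the chain of standard isomorphisms (perfect-object evaluation, tensor--hom adjunction, and the identity $\RHom_A(C,A)\lotimes_A X\cong\RHom_A(C,X)$), with the paper carrying this out module-by-module for a finitely generated projective $P$ and then passing to bounded complexes, while you work in the derived category throughout and invoke d\'evissage over $\Thick(A)$.
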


\begin{proof}
  Given objects $X\in\dcat A$ and a  finitely generated projective $A$-module $P$, there are natural isomorphisms
  \begin{align*}
    \Hom_R(\Hom_A(P,X),I)&\cong\Hom_R(X,I)\otimes_A P\\
                         &\cong\Hom_A(X,\Hom_R(A,I))\otimes_A P\\
                         &\cong\Hom_A(X, \Hom_R(A,I)\otimes_A P)\,.
    \end{align*}
It remains to observe that any compact object in $\dcat A$ is isomorphic to a bounded complex of finitely generated projective $A$-modules.
\end{proof}

The Matlis lifts of
injective $R$-modules to the $R$-linear category $\KInj A$ is
described in the next result, which is modeled on
\cite[Theorem~3.4]{Krause/Le:2006a}; the proof we give is somewhat
different.

\begin{theorem}
\label{thm:Matlis}
Let $A$ be a finite $R$-algebra. The Matlis lift to $\KInj A$ of an
injective $R$-module $I$ is given by
\[
C\longmapsto  \Hom_R(A,I)\otimes_A \pres  C\,.
\]
\end{theorem}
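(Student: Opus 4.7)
The plan is to identify the functor $C \mapsto \Hom_R(A,I)\otimes_A \pres C$ as the Matlis lift by verifying the representing isomorphism
\[
\KHom(X,\Hom_R(A,I)\otimes_A \pres C)\cong \Hom_R(\KHom(C,X),I)
\]
that is natural in $X\in\KInj A$ and in $C\in \KInjc A$.

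First I would check the target lies in $\KInj A$: since $C$ is compact, $\pres C$ may be taken to be a bounded-above complex of finitely generated projective $A$-modules, so it suffices to observe that $\Hom_R(A,I)$ is an injective $A$-module (by $\Hom_A(-,\Hom_R(A,I))\cong \Hom_R(-,I)$ and exactness of the latter), hence so is $\Hom_R(A,I)\otimes_A P$ for any finitely generated projective $A$-module $P$.

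The key computation is the chain of natural isomorphisms
\begin{align*}
\Hom_A\bigl(X,\Hom_R(A,I)\otimes_A \pres C\bigr)
  &\cong \Hom_A\bigl(X,\Hom_R(\Hom_A(\pres C,A),I)\bigr)\\
  &\cong \Hom_R\bigl(\Hom_A(\pres C,A)\otimes_A X,\,I\bigr),
\end{align*}
where the first step uses, term-wise, the standard isomorphism $\Hom_R(A,I)\otimes_A P\cong \Hom_R(\Hom_A(P,A),I)$ valid for finitely generated projective $A$-modules $P$, and the second step is the tensor--hom adjunction between $A$ and $R$. Taking $H^{0}$ and using that $I$ is injective over $R$, so that $H^{0}\circ \Hom_R(-,I)=\Hom_R(H^{0}(-),I)$, yields
\[
\KHom(X,\Hom_R(A,I)\otimes_A \pres C)\cong \Hom_R\!\left(H^{0}(\Hom_A(\pres C,A)\otimes_A X),\,I\right).
\]
Finally, \intref{Lemma}{lem:an-iso} identifies the inner $H^{0}$ with $\KHom(C,X)$, giving the required natural isomorphism.

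The main technical point I would have to watch is compatibility of the various $A$-actions: $\Hom_A(\pres C,A)$ is naturally a right $A$-module (via the right $A$-action on $A$), and this is precisely what is needed both to form the tensor product $\Hom_A(\pres C,A)\otimes_A X$ and to recover a left $A$-module $\Hom_R(\Hom_A(\pres C,A),I)$ such that the first isomorphism displayed above is $A$-linear in the correct variables. Once that bookkeeping is in place and one checks naturality in $C$ (which reduces to the obvious naturality of $\pres$ on compacts up to homotopy), the representability argument that fixes $T_I(C)$ uniquely up to canonical isomorphism completes the identification.
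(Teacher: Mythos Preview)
Your proof is correct and follows essentially the same route as the paper's: both arguments hinge on \intref{Lemma}{lem:an-iso}, the injectivity of $I$ to commute $H^0$ with $\Hom_R(-,I)$, tensor--hom adjunction, and the standard identification $\Hom_R(A,I)\otimes_A P\cong \Hom_R(\Hom_A(P,A),I)$ for finitely generated projective $P$. The only difference is cosmetic---you run the chain of isomorphisms from $\KHom(X,\Hom_R(A,I)\otimes_A\pres C)$ toward $\Hom_R(\KHom(C,X),I)$, while the paper goes in the opposite direction---and you spell out explicitly why the target lies in $\KInj A$, which the paper leaves implicit.
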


\begin{proof}
Fix objects $C,X$ in $\KInj A$ with $C$ compact. The key input is \intref{Lemma}{lem:an-iso} that yields the first isomorphism below
\begin{align*}
\Hom_{R}(\KHom(C,X),I) 
	&\cong \Hom_{R}(H^{0}(\Hom_{A}(\pres C,A)\otimes_{A}X), I) \\
	&\cong H^0(\Hom_R(\Hom_{A}(\pres C,A)\otimes_{A}X,I)) \\
	&\cong H^0(\Hom_A(X, \Hom_{R}(\Hom_{A}(\pres C,A),I)) \\
        &\cong H^0(\Hom_A(X, \Hom_R(A,I) \otimes_{A} \pres C) )\\
	&\cong \KHom(X, \Hom_R(A,I) \otimes_{A} \pres C)\,.
\end{align*}
The second one holds because $I$ is injective. The rest are standard.
\end{proof}

The next result describes Matlis lifts to $\KacInj A$, using the
functors from \eqref{eq:recollement}. In \intref{Lemma}{le:kac-gen} we
described the compact objects in that category.

\begin{corollary}
\label{cor:TsT}
For a compact object in  $\KacInj A$ of the form $\bfs C$,  given by a
compact object $C$ in $\KInj A$, the Matlis lift of an injective
$R$-module $I$ is the complex
\[
T_{I}(\bfs C) \cong \bfr (T_{I} C ) \cong \bfr (\Hom_R(A,I)\otimes_A \pres  C)\,.
\]
\end{corollary}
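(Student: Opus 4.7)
The plan is to exploit the two adjunctions $\bfs\dashv\mathrm{incl}$ and $\mathrm{incl}\dashv\bfr$ from the recollement \eqref{eq:recollement}, together with the explicit description of Matlis lifts in $\KInj A$ from \intref{Theorem}{thm:Matlis}. First I would observe that $\bfs C$ is indeed a compact object of $\KacInj A$ (this is the content of \intref{Lemma}{le:kac-gen}, and it also follows abstractly from the fact that $\bfs$ is a left adjoint whose right adjoint $\mathrm{incl}$ preserves coproducts). Hence the Matlis lift $T_I(\bfs C)$ is a well-defined object of $\KacInj A$, characterised up to unique isomorphism by the natural isomorphism
\[
\Hom_R(\KHom(\bfs C,Y),I)\cong\KHom(Y,T_I(\bfs C))\quad\text{for all }Y\in\KacInj A.
\]

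Next I would chain three natural isomorphisms, for a fixed $Y\in\KacInj A$. The left adjunction $\bfs\dashv\mathrm{incl}$ produces $\KHom(\bfs C,Y)\cong\KHom(C,Y)$, where on the right $Y$ is viewed inside $\KInj A$. Applying $\Hom_R(-,I)$ and invoking the defining property of the Matlis lift $T_I C$ inside $\KInj A$ (that is, \intref{Theorem}{thm:Matlis}) yields
\[
\Hom_R(\KHom(C,Y),I)\cong\KHom(Y,T_I C).
\]
Finally, the right adjunction $\mathrm{incl}\dashv\bfr$ gives $\KHom(Y,T_I C)\cong\KHom(Y,\bfr(T_I C))$, where the last Hom is now computed inside $\KacInj A$.

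Composing these three natural isomorphisms and appealing to Yoneda in $\KacInj A$ identifies $T_I(\bfs C)\cong\bfr(T_I C)$. The second stated isomorphism is then immediate by substituting the explicit formula $T_I C\cong\Hom_R(A,I)\otimes_A\pres C$ provided by \intref{Theorem}{thm:Matlis}. There is no real obstacle; the only point requiring care is distinguishing Hom-sets computed in $\KacInj A$ from those in $\KInj A$, and this is exactly what the two adjunctions of the recollement keep track of.
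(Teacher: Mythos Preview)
Your proof is correct and essentially the same as the paper's: both chain the adjunctions $\bfs\dashv\mathrm{incl}$ and $\mathrm{incl}\dashv\bfr$ with the defining property of the Matlis lift in $\KInj A$ from \intref{Theorem}{thm:Matlis}, then conclude by Yoneda. The only cosmetic difference is that the paper runs the chain starting from $\KHom(X,\bfr(T_I C))$ rather than from $\KHom(\bfs C,Y)$, but the three ingredients and the logic are identical.
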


\begin{proof}
For any acyclic complex $X$ of injective $R$-modules, one has
\begin{align*}
\KHom (X,\bfr(T_{I}(C)) 
	& \cong \KHom (X,T_{I}(C)) \\
	& \cong \Hom_{R}(\KHom (C,X), I) \\
	& \cong \Hom_{R}(\KHom (\bfs C,X), I)\\
   	& \cong \KHom (X,T_{I}(\bfs C))\,.
\end{align*}
This justifies the first isomorphism. For the second one, see \intref{Theorem}{thm:Matlis}.
\end{proof}

\begin{remark}
There is a notion of purity for compactly generated triangulated  categories, analogous to the classical concept of purity for module categories; see Crawley-Boevey's survey \cite{CB1998}. It follows from the construction that any Matlis lift is a pure-injective object. In particular, we obtain from a Matlis lift a pure-injective module when an acyclic complex is identified with an $A$-module.
\end{remark}

\subsection*{Gorenstein categories}
Let $\cat T$ be an $R$-linear category. Following \cite{Benson/Iyengar/Krause/Pevtsova:2019a} we say that $\cat T$ is \emph{Gorenstein} if there is an $R$-linear triangle equivalence
\[ 
F\col {\cat T}^{\mathrm{c}}\longiso {\cat T}^{\mathrm{c}}
\] 
such that for each $\fp$ in $\supp_{R} \cat T$ there is an integer $d(\fp)$ and a natural isomorphism
\[ 
 \gam_\fp\comp F \cong   \Sigma^{-d(\fp)}\circ T_\fp
\] 
of functors $\cat T^{\mathrm{c}}\to \cat T$. The functor $F$ plays the role of a \emph{global Serre functor} because it induces a Serre functor, in the sense of Bondal and Kapranov~\cite{Bondal/Kapranov:1990a}, on the subcategory of compacts objects in ${\cat T}_\fp$, the $\fp$-local $\fp$-torsion objects in $\cat T$, for  $\fp$ in $\Spec R$. More precisely, localising with respect to $\fp$ yields a functor $F_\fp\col {\cat T}_\fp^{\mathrm{c}}\iso {\cat T}_\fp^{\mathrm{c}}$ and a natural isomorphism
\[
\Hom_R(\Hom_{\cat T}(X,Y),I(\fp))\cong\Hom_{\cat T}(Y,\Sigma^{d(\fp)}F_\fp X)
\]
for objects $X,Y\in\cat T_\fp$ such that $X$ is compact and $\supp_R X=\{\fp\}$.  This is explained in \cite[\S7]{Benson/Iyengar/Krause/Pevtsova:2019a}. In what follows we focus on the following special case.

\begin{proposition}
\label{pr:Serre}
Let $\cat T$ be a compactly generated $R$-linear category that is Gorenstein, with global Serre functor $F$. Fix a maximal ideal $\fm$
in $R$.  For any $X\in {\cat T}^{\mathrm c}$ and $Y\in\cat T$ with $\supp_RX=\{\fm\}$ there is a natural isomorphism
\[
\Hom_R(\Hom_{\cat T}(X,Y), I(\fm)) \cong \Hom_{\cat T}(Y,\Sigma^{d(\fm)}FX) 
\]
In particular, if $\supp_R\cat T=\{\fm\}$, then $\Sigma^{d(\fm)}F$ is a Serre functor on ${\cat T}^{\mathrm c}$.
\end{proposition}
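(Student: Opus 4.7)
The plan is to identify the representing object $T_{\fm}(X)$ of the Matlis lift with $\Sigma^{-d(\fm)}FX$, and then read off the asserted natural isomorphism from the defining property of $T_{\fm}$. The Gorenstein hypothesis already provides a natural isomorphism
\[
\gam_{\fm}\comp F \cong \Sigma^{d(\fm)}\comp T_{\fm}
\]
of functors $\cat T^{\mathrm c}\to\cat T$, so the task reduces to showing that the canonical comparison morphism $\gam_{\fm}FX\to FX$ is an isomorphism for $X\in\cat T^{\mathrm c}$ with $\supp_R X=\{\fm\}$.

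First I would argue that $F$ preserves support: being an $R$-linear triangle equivalence of $\cat T^{\mathrm c}$, it commutes (up to natural isomorphism) with the localisation and torsion functors associated to specialisation closed subsets of $\Spec R$, and hence $\supp_R FX = \supp_R X=\{\fm\}$. Next, since $\fm$ is maximal, the specialisation closed subset $V(\fm)$ equals $\{\fm\}$, so an object $Z$ of $\cat T$ with $\supp_R Z\subseteq\{\fm\}$ is simultaneously $\fm$-local and $\fm$-torsion, whence the canonical map $\gam_{\fm}Z\to Z$ is an isomorphism; this is a direct consequence of the local-to-global and support formalism of \cite{Benson/Iyengar/Krause:2008a, Benson/Iyengar/Krause:2011a}. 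Applying this to $Z=FX$ yields
\[
FX\;\cong\;\gam_{\fm}FX\;\cong\;\Sigma^{d(\fm)}T_{\fm}(X),
\]
so that $T_{\fm}(X)\cong\Sigma^{-d(\fm)}FX$ naturally in $X$.

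Combining this with the defining adjunction-like isomorphism of the Matlis lift gives, for every $Y\in\cat T$, the natural isomorphism
\[
\Hom_R(\Hom_{\cat T}(X,Y),I(\fm))\;\cong\;\Hom_{\cat T}(Y,T_{\fm}(X))\;\cong\;\Hom_{\cat T}(Y,\Sigma^{-d(\fm)}FX),
\]
which is the stated formula. For the ``in particular'' clause, when $\supp_R\cat T=\{\fm\}$ every compact object of $\cat T$ satisfies the hypothesis on $X$, and we may also take $Y$ compact, so the displayed isomorphism holds on $\cat T^{\mathrm c}\times\cat T^{\mathrm c}$ and exhibits $\Sigma^{-d(\fm)}F$ as a Serre functor in the sense of Bondal--Kapranov.

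The step I expect to be the main point is the one where support singleton $\{\fm\}$ with $\fm$ maximal forces $\gam_{\fm}Z\iso Z$; everything else is essentially formal once this is in place. Invoking the BIK local-to-global principle as already used in \intref{Lemma}{lem:lgp} should make this transparent, and the $R$-linearity of $F$ (which is part of the definition of a global Serre functor) is what guarantees compatibility of $F$ with $\gam_{\fm}$.
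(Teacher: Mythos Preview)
Your argument is correct and turns on the same observation the paper uses: when $\fm$ is maximal, any object with support $\{\fm\}$ is already $\fm$-local, so $\gam_{\fm}$ acts on it as the identity. The paper's own proof is a one-line appeal to \cite[Proposition~7.3]{Benson/Iyengar/Krause/Pevtsova:2019a}, which establishes the Serre-type isomorphism on $\cat T_\fp$ for an arbitrary prime $\fp$; you have in effect unpacked that proposition in the special case $\fp=\fm$, trading a citation for a self-contained argument.

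One point deserves more care. You assert that $F$, being an $R$-linear equivalence of $\cat T^{\mathrm c}$, ``commutes with the localisation and torsion functors''; but $\gam_{\fp}$ and $L_{Z(\fp)}$ are endofunctors of $\cat T$, while $F$ is only given on compacts, so this statement does not parse as written. The conclusion $\supp_R FX=\{\fm\}$ is nevertheless correct. The cleanest fix is to extend $F$ to a coproduct-preserving $R$-linear equivalence $\widehat F\colon\cat T\to\cat T$ (possible since $\cat T$ is compactly generated) and then invoke compatibility of $\widehat F$ with $\gam_{\fp}$. Alternatively, you can bypass the support computation for $FX$ entirely: the Gorenstein hypothesis already gives $\gam_{\fp}FX\cong\Sigma^{d(\fp)}T_{\fp}(X)$ for every $\fp\in\supp_R\cat T$, and $T_{\fp}(X)=0$ for $\fp\neq\fm$ because $X\cong X_{\fm}$ forces $\Hom_{\cat T}(X,-)$ to take values in $\fm$-local $R$-modules, which pair trivially with $I(\fp)$.
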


\begin{proof}
Since $\fm$ is maximal, any object supported on $\fm$ is already $\fm$-local. Thus the desired isomorphism is a special case of \cite[Proposition~7.3]{Benson/Iyengar/Krause/Pevtsova:2019a}. 
\end{proof}

\subsection*{Gorenstein rings}
Let $R$ be a commutative Gorenstein ring. For  $\fp$ in $\Spec R$, set $h(\fp)=\dim R_{\fp}$; this is the height of $\fp$. The Gorenstein property for $R$ is equivalent to the condition that the minimal injective resolution $I$ of $R$ satisfies
\[
I^{n} = \bigoplus_{h(\fp) = n} I(\fp)  \quad\text{for each $n$.}
\]
This translates to the condition that in $\KInj R$ there are isomorphisms
\begin{equation}
\label{eq:gd-commutative}
\gam_{\fp}(\ires R) \cong \Sigma^{-h(\fp)} I(\fp) \quad\text{for each $\fp\in \Spec R$.}
\end{equation}
This result is due to Grothendieck, cf.\ \cite[Proposition~3.5.4]{Bruns/Herzog:1998a}.

\begin{proposition}
\label{prp:Gor-D}
Let $A$ be a finite $R$-algebra that is projective as an $R$-module. The following conditions are equivalent:
\begin{enumerate}[\quad\rm(1)]
\item
The $R$-algebra $A$ is Gorenstein.
\item
The $R$-linear category $\dcat A$ is Gorenstein.
\end{enumerate}
When they hold the global Serre functor is $\omega_{A/R}\lotimes_A-$, and $d(\fp)=\dim R_\fp$.
\end{proposition}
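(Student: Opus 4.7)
My approach is to verify the forward direction by exhibiting the Nakayama tilt $\omega_{A/R}\lotimes_A(-)$ as the global Serre functor on $\dcat A$, and to reverse the argument locally for the converse.

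For $(1)\Rightarrow(2)$, I would set $F\colonequals \omega_{A/R}\lotimes_A(-)$. By \intref{Theorem}{thm:Morita-D} this is an $R$-linear triangle auto-equivalence of $\dcat A$ that preserves compact (hence perfect) objects, so it is the candidate global Serre functor on $(\dcat A)^{\mathrm c}$. Since $A$ is faithful and finitely generated as an $R$-module, $A_\fp\ne 0$ for every $\fp\in\Spec R$, so $\supp_R \dcat A = \Spec R$. The main task is then to construct a natural isomorphism $\gam_\fp F(C)\iso \Sigma^{-\dim R_\fp}T_\fp(C)$ for each compact $C$. On the Matlis side, the identification $\RHom_R(A,I(\fp))\cong \omega_{A/R}\otimes_R I(\fp)$ — which uses that $A$ is finite and projective over $R$ — combined with \intref{Proposition}{pro:Matlis-D} gives
\[
T_\fp(C)\cong (\omega_{A/R}\otimes_R I(\fp))\lotimes_A C.
\]
On the torsion side, since $R$ acts centrally the functor $\gam_\fp$ on $\dcat A$ amounts to tensoring over $R$ with $\gam_\fp R\in\dcat R$. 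By \intref{Lemma}{lem:RGor} $R$ is Gorenstein, so \eqref{eq:gd-commutative} yields $\gam_\fp R\cong\Sigma^{-\dim R_\fp}I(\fp)$ in $\dcat R$. Combining these with the associativity $(\omega_{A/R}\lotimes_A C)\otimes_R I(\fp)\cong(\omega_{A/R}\otimes_R I(\fp))\lotimes_A C$ — valid because $R$ is central — delivers the required isomorphism with $d(\fp) = -\dim R_\fp$.

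For $(2)\Rightarrow(1)$, I would fix $\fp\in\Spec R$; it suffices to show $A_\fp$ is Iwanaga-Gorenstein. Localisation at $\fp$ is compatible with the $R$-action and with the functors $\gam_\fq$, so the Gorenstein structure on $\dcat A$ descends to $\dcat{A_\fp}$, whose support is concentrated at $\fp R_\fp$. \intref{Proposition}{pr:Serre} then supplies a Serre functor on the compact objects of $\dcat{A_\fp}$, and evaluating the resulting duality at $A_\fp$ identifies a shift of its image with a dualising complex for $A_\fp$. This forces both the left and right injective dimensions of $A_\fp$ to be finite, so $A_\fp$ is Iwanaga-Gorenstein.

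The main obstacle will be the projection formula $\gam_\fp F(C)\cong F(C)\lotimes_R\gam_\fp R$ underlying the forward direction: while it is formal from the BIK framework and the centrality of $R$, tracking its compatibility with the bimodule structure on $\omega_{A/R}$ requires some care. The converse is conceptually clean via \intref{Proposition}{pr:Serre}, but extracting finite injective dimension from the bare existence of a Serre functor is the genuinely substantive input.
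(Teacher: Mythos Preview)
Your forward direction $(1)\Rightarrow(2)$ is correct and is essentially the computation the paper carries out: identify $T_\fp(C)$ via \intref{Proposition}{pro:Matlis-D}, rewrite $\RHom_R(A,I(\fp))$ as $\omega_{A/R}\otimes_R I(\fp)$ using projectivity of $A$ over $R$, invoke \eqref{eq:gd-commutative} for the Gorenstein ring $R$, and reassemble. The projection formula you flag as the obstacle is exactly the step the paper writes as $\gam_\fp(\ires R)\lotimes_R FC \cong \gam_\fp(\ires R\lotimes_R FC)$; it is indeed formal from centrality.

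The converse $(2)\Rightarrow(1)$ has a genuine gap. Your claim that $\dcat{A_\fp}$ has support concentrated at $\fp R_\fp$ is false: since $A_\fp$ is faithful and projective over $R_\fp$, the compact object $A_\fp$ already has $\supp_{R_\fp}A_\fp=\Spec R_\fp$, so $\supp_{R_\fp}\dcat{A_\fp}=\Spec R_\fp$. Consequently the ``in particular'' clause of \intref{Proposition}{pr:Serre} does not apply, and you do not obtain a Serre functor on all of $(\dcat{A_\fp})^{\mathrm c}$. Even granting such a functor, the inference ``this identifies a dualising complex, hence finite injective dimension'' is too quick: Serre duality on perfect complexes only pairs perfect against perfect, whereas finite injective dimension requires control of $\Ext^i_A(M,A)$ for arbitrary $M\in\mod A$.

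The paper avoids both issues by staying in $\dcat A$ and working one maximal ideal $\fm$ at a time. It applies the Gorenstein hypothesis to the compact object $F^{-1}A$; for any $M\in\mod(A/\fm A)$---which is $\fm$-torsion, hence $\fm$-local since $\fm$ is maximal---this yields
\[
\DHom A(M,\Sigma^i A)\;\cong\;\Hom_R\bigl(\DHom A(F^{-1}A,\Sigma^{-i-d(\fm)}M),\,I(\fm)\bigr).
\]
Because $F^{-1}A$ is perfect, the right side vanishes for $|i|\gg 0$, giving $\Ext^i_A(M,A)=0$ for $i\gg 0$; an argument from \cite{Avramov/Iyengar:2019a} then yields finite injective dimension of $A_\fm$. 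The essential difference from your sketch is that the paper never asserts a global Serre functor on compacts of a localisation; it only needs the defining isomorphism $\gam_\fm F\cong\Sigma^{d(\fm)}T_\fm$ evaluated at the single compact $F^{-1}A$, tested against $\fm$-torsion modules.
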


\begin{proof}
(1)$\Rightarrow$(2)
As the $R$-algebra $A$ is Gorenstein, the functor $F\colonequals \omega\lotimes_A-$ is an equivalence on $\dcat A$,  and hence restricts to an equivalence ${\dcat A}^{\mathrm c}$, the subcategory of perfect complexes; see \intref{Theorem}{thm:Morita-D}. With $d(\fp)$ as in the statement, for any perfect complex $C$, from \intref{Proposition}{pro:Matlis-D} one gets the equality below
\begin{align*}
T_{\fp}(C) 
	&= \RHom_R(A,I(\fp))\lotimes_A C \\
        &\cong I(\fp) \lotimes_R \Hom_R(A,R)  \lotimes_A C \\
        &\cong  \Sigma^{d(\fp)} \gam_\fp(\ires R) \lotimes_R FC \\
        & \cong \Sigma^{d(\fp)}\gam_{\fp}( \ires R \lotimes_RFC) \\
        & \cong \Sigma^{d(\fp)}\gam_{\fp}FC\,.
\end{align*}
The third isomorphism is from \eqref{eq:gd-commutative}, and the rest are standard. Thus $\dcat A$ is Gorenstein, with the prescribed global Serre functor and shift $d(\fp)$. 

(2)$\Rightarrow$(1) It suffices to verify that the injective dimension of $A_\fm$ is finite for any maximal ideal $\fm$ in $R$. For this it suffices to verify that   $M\in \mod(A/\fm A)$ satisfy
\[
\Ext^i_A(M,A)  =  0 \quad\text{for $i\gg 0$.}
\]
For then, an argument along the lines of the proof of \cite[Proposition~A.1.5]{Avramov/Iyengar:2019a} yields that $A_\fm$ has finite injective dimension over itself.

Let $F\colon {\dbcat A}^{\mathrm c}\to {\dbcat A}^{\mathrm c}$ be a global Serre functor, and $F^{-1}$ its quasi-inverse. Since $M$ is $\fm$-torsion from \intref{Proposition}{pr:Serre} we get the isomorphism below
\[
\DHom A(M, \Sigma^iA) \cong \Hom_R(\DHom A(F^{-1}A, \Sigma^{d(\fm)-i}M), I(\fm))\,.
\]
It remains to note that since $F^{-1}A$ is perfect one has
\[
\DHom A(F^{-1}A, \Sigma^j(-))=0 \quad \text{on $\Mod A$,}
\]
for all $|j|\gg 0$. This implies the desired result.
\end{proof}

Here is the analogue of the preceding result dealing with homotopy categories.

\begin{proposition}
\label{prp:Gor-K}
Let $A$ be a finite $R$-algebra that is projective as an $R$-module. The $R$-linear category $\KInj A$ is Gorenstein if and only if $A$ is regular. 
\end{proposition}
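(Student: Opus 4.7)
The plan is to prove both directions using the recollement \eqref{eq:recollement} together with the Matlis lift formula of \intref{Corollary}{cor:TsT}, the crux being a K-injectivity dichotomy.

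For $(\Leftarrow)$, regularity of $A$ makes the singular locus empty, so \intref{Corollary}{cor:suppKacinj} gives $\KacInj A = 0$ and the recollement collapses into an equivalence $\bfq \colon \KInj A \iso \dcat A$. Regularity also forces each $A_\fp$ to have finite global, hence finite injective, dimension, so $A$ is a Gorenstein $R$-algebra; \intref{Proposition}{prp:Gor-D} then makes $\dcat A$ Gorenstein, and the equivalence transports this structure to $\KInj A$.

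For $(\Rightarrow)$, suppose $\KInj A$ is Gorenstein with global Serre functor $F$ but $A$ is not regular. By \intref{Corollary}{cor:suppKacinj} the singular locus is a nonempty closed subset of $\Spec R$ and so contains a maximal ideal $\fm$. Since $A_\fm$ has infinite global dimension, I can pick a simple $A$-module $M$ with $\fm M = 0$ and $\pdim_A M = \infty$; then $\supp_R M = \{\fm\}$ and $\bfs\ires M$ is a nonzero compact object of $\KacInj A$. The Gorenstein isomorphism
\[
\gam_\fm F(\ires M) \cong \Sigma^{d(\fm)}\,T_\fm(\ires M)
\]
in $\KInj A$ will be contradicted by showing the left side is K-injective and the right side is not.

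For the left side, $F(\ires M) \in \KInj A^{\mathrm c}$ is of the form $\ires N$ for some $N \in \dbcat A$; its injective resolution is bounded below, and applying $\gam_\fm$ preserves this boundedness and termwise $A$-injectivity via \intref{Lemma}{lem:gamE} and the Gabriel decomposition of injective $A$-modules, giving a bounded below complex of injectives, hence K-injective. For the right side, \intref{Corollary}{cor:TsT} identifies $\bfr T_\fm(\ires M) \cong T_\fm(\bfs\ires M)$, and the defining property of the Matlis lift gives
\[
\KHom\bigl(\bfs\ires M,\,T_\fm(\bfs\ires M)\bigr) \cong \Hom_R\bigl(\End_{\KacInj A}(\bfs\ires M),\,I(\fm)\bigr).
\]
The endomorphism ring on the right is a nonzero $R/\fm$-vector space (multiplication by $\fm$ on $\ires M$ is null-homotopic because $\fm M = 0$, and the identity on $\bfs\ires M$ is nonzero), so its Matlis dual is nonzero; thus $T_\fm(\bfs\ires M) \ne 0$, forcing $\bfr T_\fm(\ires M) \ne 0$. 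Since $\bfr$ annihilates K-injective complexes, $T_\fm(\ires M)$ cannot be K-injective, completing the contradiction. The main technical obstacle I foresee is the K-injectivity of $\gam_\fm F(\ires M)$: both steps defining $\gam_\fm$ — localization at $\fm$ and $V(\fm)$-torsion — must preserve $A$-injectivity termwise, which rests on the structure theory of injective $A$-modules over a finite $R$-algebra summarized in Section \ref{se:bik}.
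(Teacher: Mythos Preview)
Your argument is correct and, for the forward implication, takes a genuinely different route from the paper's. The paper simply transplants the argument of \intref{Proposition}{prp:Gor-D}(2)$\Rightarrow$(1): for each maximal ideal $\fm$ and each $M\in\mod(A/\fm A)$, $N\in\mod A$, Serre duality from \intref{Proposition}{pr:Serre} applied to the compact $\ires M$ (which is $\fm$-torsion) gives
\[
\Ext^i_A(M,N)=\KHom(\ires M,\Sigma^i\ires N)\cong \Hom_R\bigl(\KHom(F^{-1}\ires N,\Sigma^{-i-d(\fm)}\ires M),I(\fm)\bigr),
\]
and the right side vanishes for $i\gg 0$ since $F^{-1}\ires N$ is compact, hence bounded; this forces $A_\fm$ regular. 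Your approach is more structural: you observe that the Gorenstein isomorphism would force $T_\fm(\ires M)$ to be K-injective (being isomorphic to $\gam_\fm$ of a bounded-below complex of injectives), whereas \intref{Corollary}{cor:TsT} identifies $\bfr T_\fm(\ires M)$ with the Matlis lift of $\bfs\ires M$ in $\KacInj A$, which is nonzero once $M$ is not perfect. The paper's route is quicker; yours explains more transparently \emph{where} the obstruction sits in the recollement, namely that any Serre functor on $\KInj A^{\mathrm c}$ would have to collapse the acyclic part.

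Two small points worth tightening. First, the singular locus need not be Zariski-closed in general, but this is harmless: any prime in it specialises to a maximal ideal still in it, since regularity localises. Second, your choice of a \emph{simple} $M$ with $\fm M=0$ and infinite projective dimension is legitimate but deserves a word: $A_\fm$ is semi-perfect (being module-finite over the local ring $R_\fm$), so minimal projective resolutions exist and $\mathrm{gl.dim}\,A_\fm=\pdim_{A_\fm}(A_\fm/\mathrm{rad}\,A_\fm)$; since the simples are summands of the latter, one of them has infinite projective dimension. Alternatively one can just take $M=A/\fm A$ and drop simplicity, as your argument only uses $\fm M=0$ and $M$ not perfect.
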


\begin{proof}
When $A$ is regular the canonical functor $\KInj A\to \dcat A$ is an equivalence and $\dcat A$ is Gorenstein, by \intref{Proposition}{prp:Gor-D}.  As to the converse, it suffices check that $A_\fm$ is regular for each maximal ideal $\fm$ in $R$. 

Arguing as in the proof of  (2)$\Rightarrow$(1) in \intref{Proposition}{prp:Gor-D} one deduces that for each $M\in \mod(A/\fm A)$ and $N\in\mod A$ one has
\[
\Ext^i_A(M,N)=0 \quad\text{for $i\gg 0$.}
\]
This implies that $A_\fm$ is regular.
\end{proof}

The preceding results concern the Gorenstein property for the derived category and the homotopy category of injectives for two of the three categories that appear in the recollement~\eqref{eq:recollement}. That of the last one is dealt with in the next section.

\section{Grothendieck duality for $\KacInj A$}
\label{se:GD}
This section is dedicated to the proof of the following result. As explained in the introduction, this has been the guiding light of the results presented in this work.

\begin{theorem}
\label{thm:Gor}
Let $A$ be a Gorenstein $R$-algebra. For each compact object $X$ in $\KacInj A$ and $\fp$ in the singular locus of $A$ there is is a natural isomorphism
\[
\gam_{\fp} X \cong \Sigma^{-d(\fp)} T_{\fp} (\wh\kan_{A/R} X) 
\]
where $d(\fp)= \dim (R_\fp) - 1$. In particular, the $R$-linear category $\KacInj A$ is Gorenstein, with global Serre functor the quasi-inverse of $\wh\kan_{A/R}$.
\end{theorem}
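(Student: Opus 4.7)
The plan is to verify the claimed natural isomorphism on a set of compact generators of $\KacInj A$ and to match the two sides using the Matlis lift formulas established earlier; the shift by $\Sigma$ between the $\dcat A$-exponent $-h(\fp)$ and the $\KacInj A$-exponent $d(\fp)=-h(\fp)+1$ will come from \intref{Lemma}{lem:rs}.

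First I would use \intref{Lemma}{le:kac-gen} to reduce to the case $X=\bfs\ires M$ with $M\in\dbcat A$, since every compact object of $\KacInj A$ is a direct summand of such an object. Because $\wh\kan_{A/R}$ is a triangle equivalence of $\KInj A$ that restricts to an equivalence on $\KacInj A$ (\intref{Theorem}{thm:Morita-K}), it commutes with the recollement functors $\bfs$ and $\ires$. Writing $N\colonequals\RHom_A(\omega_{A/R},M)$, so that $M\cong\omega_{A/R}\lotimes_A N$ by \intref{Theorem}{thm:Morita-D}, the commutative square of \intref{Theorem}{thm:Morita-K} yields $\wh\kan_{A/R}X\cong\bfs\ires N$. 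The claim then reduces to the natural isomorphism
\[
\gam_\fp\bigl(\bfs\ires(\omega_{A/R}\lotimes_A N)\bigr)\;\cong\;\Sigma^{d(\fp)}T_\fp(\bfs\ires N).
\]

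Next I would compute each side explicitly. By \intref{Corollary}{cor:TsT} and \intref{Theorem}{thm:Matlis}, the right-hand side equals $\Sigma^{d(\fp)}\bfr Z$, where $Z\colonequals\Hom_R(A,I(\fp))\otimes_A\pres N$. For the left-hand side, the Gorenstein hypothesis forces $R$ itself to be Gorenstein by \intref{Lemma}{lem:RGor}, so \eqref{eq:gd-commutative} applied in $\KInj R$ reads $\gam_\fp\ires_R R\cong\Sigma^{-h(\fp)}I(\fp)$. Since $A$ is $R$-projective, $\Hom_R(A,\ires_R R)$ is a K-injective $A$-resolution of $\omega_{A/R}$, and tensoring with the K-projective $\pres N$ supplies a K-injective model of $\ires(\omega_{A/R}\lotimes_A N)$ to which $\gam_\fp$ applies termwise. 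Using that $A$ is finite over $R$---so that $\gam_\fp\Hom_R(A,-)\cong\Hom_R(A,\gam_\fp(-))$---together with the fact that $\gam_\fp$ commutes with tensoring by a complex of projective $A$-modules, one obtains
\[
\gam_\fp\ires(\omega_{A/R}\lotimes_A N)\;\cong\;\Sigma^{-h(\fp)}Z
\]
in $\KInj A$.

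Finally I would apply $\bfs$ to the previous display and invoke two compatibilities. That $\bfs$ commutes with $\gam_\fp$ follows from applying $\bfs$ to $\gam_\fp$ of the triangle $\ipres Y\to Y\to\bfs Y$ of \intref{Lemma}{lem:lambda2}, with $Y=\ires(\omega_{A/R}\lotimes_A N)$: the first term lies in $\Loc(\ires A)=\ker\bfs$ and $\gam_\fp$ preserves this subcategory by \intref{Lemma}{lem:lgp}, so its image under $\bfs$ vanishes. The second compatibility is the identification $\bfs Z\cong\Sigma\bfr Z$, which is an instance of \intref{Lemma}{lem:rs} (i.e., $\bfs\ires\cong\Sigma\bfr$ on $\Loc(\ires A)$) once $Z$ is shown to belong to $\Loc(\ires A)$. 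Combined with $d(\fp)=-h(\fp)+1$, these yield
\[
\gam_\fp\bfs\ires(\omega_{A/R}\lotimes_A N)\;\cong\;\Sigma^{-h(\fp)}\bfs Z\;\cong\;\Sigma^{-h(\fp)+1}\bfr Z\;=\;\Sigma^{d(\fp)}T_\fp(\bfs\ires N),
\]
and the ``moreover'' clause identifying $\wh\kan_{A/R}^{-1}$ as the global Serre functor is immediate from the definition of a Gorenstein category. The hard part will be establishing that $Z$ belongs to $\Loc(\ires A)$: this is delicate when $\pres N$ is unbounded (i.e., when $N$ is not perfect), and the argument must exploit the finiteness of $A$ over $R$ and properties of injective modules over the underlying Gorenstein ring $R$ rather than a naive termwise reduction.
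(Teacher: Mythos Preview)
Your strategy is the same as the paper's, but there is a genuine error at the heart of the computation. You assert that $D\otimes_A\pres N$, with $D=\Hom_R(A,\ires_R R)$, is a K-injective model of $\ires(\omega_{A/R}\lotimes_A N)$. This fails whenever $N$ (equivalently $M$) is not perfect. Indeed, $D\otimes_A\pres N$ does lie in $\Loc(\ires A)$---this follows from $\Inj A\subset\Loc(\ires A)$, which holds for Gorenstein algebras (see the Claim in the proof of \intref{Theorem}{thm:documenta}), so the ``hard part'' you flag at the end is in fact easy. But $\Loc(\ires A)=\ker\bfs$, so a K-injective complex there has $\bfs=0$; yet its K-injective resolution is $\ires M$ and $\bfs\ires M\neq 0$ for non-perfect $M$. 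Thus $D\otimes_A\pres N$ is \emph{not} K-injective, and your displayed isomorphism $\gam_\fp\ires(\omega_{A/R}\lotimes_A N)\cong\Sigma^{-h(\fp)}Z$ is false: the two sides differ by $\gam_\fp$ of the $\bfr$-term you have silently dropped. The same error resurfaces in your use of \intref{Lemma}{lem:rs}: that lemma gives $\bfs\ires Z\cong\Sigma\bfr Z$, not $\bfs Z\cong\Sigma\bfr Z$, and since $Z\in\Loc(\ires A)$ one actually has $\bfs Z=0$. Your final chain therefore asserts $\gam_\fp\bfs\ires M\cong 0$.

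The paper repairs exactly this via \intref{Lemma}{lem:rslocal}. One works with $X'\colonequals D\otimes_A\pres N$ directly, records that $X'\in\Loc(\ires A)$ with $\ires X'\cong\ires M$ bounded below, and then applies $\gam_\fp$ followed by $\bfr$ (not $\bfs$) to the triangle $\Sigma^{-1}\bfs\ires X'\to X'\to\ires X'$ coming from \intref{Lemma}{lem:rs}. The bounded-below term $\gam_\fp\ires X'$ is killed by $\bfr$ (\intref{Lemma}{lem:bb}), while the acyclic term $\gam_\fp\bfs\ires X'$ is fixed by $\bfr$ (\intref{Lemma}{lem:lipman}). This yields $\gam_\fp\bfs\ires M\cong\Sigma\,\bfr\gam_\fp X'\cong\Sigma^{1-h(\fp)}\bfr Z$, which together with \intref{Corollary}{cor:TsT} is the desired formula. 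In short, the subtlety you sensed for unbounded $\pres N$ is real, but it lies in the failure of K-injectivity of $D\otimes_A\pres N$; the remedy is to carry the $\bfr$-term through the argument rather than discard it.
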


The proof is given further below. \intref{Theorem}{ithm:Gor} from the
introduction is an immediate consequence.

\begin{corollary}
  Let $A$ be a Gorenstein $R$-algebra, and let $M,N$ be G-projective
  $A$-modules with $M$ finitely generated.  For each $\fp\in\Spec R$
  there is a natural isomorphism
\[
\Hom_R(\tExt_A^i(M,N),I(\fp))\cong \tExt_A^{d(\fp)-i}(N, \gam_\fp S(M)).
\]
\end{corollary}
\begin{proof}
The assertion is a direct translation of \intref{Theorem}{thm:Gor}, given
the equivalence  $\uGProj A\iso \KacInj A$ from \intref{Proposition}{prp:g=tac}.
\end{proof}

We continue with a consequence concerning duality for the category of
compact objects. The statements are simpler, and perhaps more
striking, when specialised to the case of local isolated
singularities, and that is what we do.

\subsection*{Isolated singularities}
Let $(R,\fm,k)$ be commutative noetherian local ring and $A$ a finite projective $R$-algebra. We say that $A$ has an \emph{isolated singularity} if its singular locus is $\{\fm\}$; that is to say,  if the the ring $A_\fp$ is regular for each non-maximal ideal $\fp$ in $\Spec R$; see the discussion around \intref{Corollary}{cor:suppKacinj}. 

\begin{corollary}
\label{cor:serre}
Let $R$ be a commutative noetherian local ring  of Krull dimension $d$. If $A$ is a Gorenstein $R$-algebra with an isolated singularity, then the assignment
\[
X\mapsto  \Sigma^{d-1} \wh\kan^{-1}(X) 
\]
is a Serre functor on the $R$-linear category ${\KacInj A}^{\mathrm c}$.
\end{corollary}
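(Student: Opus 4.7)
The plan is to deduce this corollary directly from \intref{Theorem}{thm:Gor} together with the characterisation of Serre functors for Gorenstein categories supported on a single maximal ideal, namely \intref{Proposition}{pr:Serre}. The key point is that the isolated-singularity hypothesis collapses the support of $\KacInj A$ to the single closed point of $\Spec R$, at which point the shift in the Serre functor becomes numerical and the formulas simplify.

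First I would unpack the hypotheses. Since $A$ has an isolated singularity, the singular locus of $A$ equals $\{\fm\}$. By \intref{Corollary}{cor:suppKacinj} this means
\[
\supp_R \KacInj A \;=\; \{\fm\}\,.
\]
Next, \intref{Theorem}{thm:Gor} asserts that $\KacInj A$ is a Gorenstein $R$-linear category, with global Serre functor $F \colonequals \wh\kan_{A/R}^{-1}$ and shift function $d(\fp) = -\dim R_{\fp} + 1$ on the singular locus. In particular, at the maximal ideal we have $d(\fm) = -d + 1$, so $-d(\fm) = d - 1$.

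Now I would invoke \intref{Proposition}{pr:Serre}. Since $\supp_R \KacInj A = \{\fm\}$ and $\fm$ is maximal, every object of $\KacInj A$ is automatically $\fm$-local and $\fm$-torsion, and the proposition tells us that $\Sigma^{-d(\fm)}F$ is a Serre functor on $(\KacInj A)^{\mathrm c}$. Substituting $F = \wh\kan_{A/R}^{-1}$ and $-d(\fm) = d-1$ yields that
\[
X \;\longmapsto\; \Sigma^{d-1}\wh\kan_{A/R}^{-1}(X)
\]
is a Serre functor on $(\KacInj A)^{\mathrm c}$, which is the claim.

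There is no substantial obstacle here: the entire content is in \intref{Theorem}{thm:Gor} and \intref{Proposition}{pr:Serre}; the corollary is just the specialisation to the case where the support is reduced to the unique closed point. The only thing one must be careful about is bookkeeping the sign in the shift $d(\fp)$, since the convention in \intref{Theorem}{thm:Gor} uses $d(\fp) = -\dim R_\fp + 1$ whereas the statement of the corollary reports the opposite sign $d - 1$ after applying $\Sigma^{-d(\fm)}$.
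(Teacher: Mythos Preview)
Your proof is correct and follows exactly the same approach as the paper: invoke \intref{Corollary}{cor:suppKacinj} to reduce the support to $\{\fm\}$, then combine \intref{Theorem}{thm:Gor} with \intref{Proposition}{pr:Serre}. You have simply made the shift bookkeeping explicit, which the paper leaves to the reader.
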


\begin{proof}
Since $A$ has an isolated singularity, the $R$-linear category $\KacInj A$ is supported at $\fm$, the maximal ideal of $R$; see \intref{Corollary}{cor:suppKacinj}.
Thus \intref{Theorem}{thm:Gor} and \intref{Proposition}{pr:Serre} yield the desired result.
\end{proof}

Given the equivalences in \intref{Theorem}{thm:rob} one can recast the duality statement above in terms of the singularity category and the stable category of Gorenstein projective modules.   Here too we are following Buchweitz's footsteps~\cite{Buchweitz:1986}, except that he does not require $A$ to be projective over a central sub-algebra; on the other hand, he considers  only rings of finite injective dimension. We can get away with local finiteness of injective dimension, thanks to \intref{Theorem}{thm:Gconverse}.

\begin{corollary}
\label{cor:serre-sing}
For $R$ and $A$ as in \intref{Corollary}{cor:serre}, the singularity category  $\dsing(A)$ has Serre duality, with Serre functor $\Sigma^{d-1}\omega_{A/R}\lotimes_A(-)$. 
\end{corollary}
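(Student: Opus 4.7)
The plan is to transport the Serre functor on $\KacInj A^{\mathrm c}$ provided by \intref{Corollary}{cor:serre} through the triangle equivalence
\[
\bfs\ires \col \dsing(A) \longiso {\KacInj A}^{\mathrm c}
\]
of \intref{Theorem}{thm:rob}, and then use \intref{Proposition}{prp:Nak-G-sing} to identify what this transported Serre functor looks like on the singularity side.

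Concretely, I would proceed as follows. First, since $R$ is local with maximal ideal $\fm$ and $A$ has an isolated singularity, $\supp_R\KacInj A=\{\fm\}$ by \intref{Corollary}{cor:suppKacinj}, so \intref{Corollary}{cor:serre} applies and gives that $\Sigma^{d-1}\wh\kan_{A/R}^{-1}$ is a Serre functor on ${\KacInj A}^{\mathrm c}$. Next, I would invoke the right-hand square of \intref{Proposition}{prp:Nak-G-sing}: it asserts that under $\bfs\ires \col \dsing(A)\iso{\KacInj A}^{\mathrm c}$, the autoequivalence $\omega_{A/R}\lotimes_A(-)$ corresponds to $\wh\kan_{A/R}^{-1}$. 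Suspensions are clearly transported to suspensions, so the composite $\Sigma^{d-1}\omega_{A/R}\lotimes_A(-)$ on $\dsing(A)$ corresponds to $\Sigma^{d-1}\wh\kan_{A/R}^{-1}$ on ${\KacInj A}^{\mathrm c}$.

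The remaining point is a general principle: a Serre functor is intrinsic to an $R$-linear triangulated category (up to natural isomorphism), so any $R$-linear triangle equivalence transports Serre functors to Serre functors. For this I would verify that the equivalence $\bfs\ires$ is $R$-linear and that the identifications of morphism spaces through this equivalence intertwine the Matlis duality $\Hom_R(-,I(\fm))$ used in the definition of a Serre functor. Concretely, given $M,N\in\uGproj(A)$ corresponding to $X,Y\in{\KacInj A}^{\mathrm c}$ under $\bfs\ires\circ\bfg$, one has $R$-linear isomorphisms $\Hom_{\dsing}(M,N)\cong\KHom(X,Y)$, and dualising with $\Hom_R(-,I(\fm))$ gives the Serre duality for $\dsing(A)$ with the transported functor $\Sigma^{d-1}\omega_{A/R}\lotimes_A(-)$.

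The step I expect to be the only mild obstacle is verifying carefully the $R$-linearity and the compatibility of this transport with the Matlis lift formalism from \intref{Section}{se:bik}; but this is essentially formal, given that both $\dsing(A)$ and ${\KacInj A}^{\mathrm c}$ inherit the $R$-action from $\dcat A$ and $\KInj A$ respectively, and that the equivalences in \intref{Theorem}{thm:rob} and \intref{Proposition}{prp:Nak-G-sing} are manifestly $R$-linear.
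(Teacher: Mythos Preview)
Your proposal is correct and follows essentially the same approach as the paper: the paper's proof is the one-line remark that this is ``a direct translation of \intref{Corollary}{cor:serre}, made using \intref{Proposition}{prp:Nak-G-sing}'', which is precisely your strategy of transporting the Serre functor through the equivalence $\bfs\ires$ and identifying it via the right-hand square of \intref{Proposition}{prp:Nak-G-sing}. Your additional verification of $R$-linearity and compatibility with Matlis duality is more explicit than the paper, but this is indeed the formal point the paper takes for granted.
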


\begin{proof}
This is a direct translation of \intref{Corollary}{cor:serre}, made using \intref{Theorem}{prp:Nak-G-sing}.
\end{proof}

And here is \intref{Corollary}{cor:serre}  transported to the world of G-projective  modules. 

\begin{corollary}
\label{cor:serre-mcm}
For $R$ and $A$ as in \intref{Corollary}{cor:serre}, the functor 
\[
M \mapsto \Omega^{1-d}\gp(\omega_{A/R}\otimes_AM)
\]
is a Serre functor on the triangulated category $\uGproj A$. \qed
\end{corollary}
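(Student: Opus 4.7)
The plan is to transport the Serre functor on $(\KacInj A)^{\mathrm c}$ provided by \intref{Corollary}{cor:serre} across the $R$-linear triangle equivalence
\[
\uGproj A \xra{\ \bfg\ } \dsing(A) \xra{\ \bfs\ires\ } (\KacInj A)^{\mathrm c}
\]
of \intref{Theorem}{thm:rob}. Since the property of being a Serre functor is preserved under any $R$-linear triangle equivalence, it suffices to identify the functor on $\uGproj A$ that corresponds to $\Sigma^{d-1}\wh\kan_{A/R}^{-1}$ on $(\KacInj A)^{\mathrm c}$.

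The outer square of the diagram in \intref{Proposition}{prp:Nak-G-sing} records precisely that, under the equivalence $\bfs\ires\bfg$, the endofunctor $\gp(\omega_{A/R}\otimes_A-)$ on $\uGproj A$ corresponds to $\wh\kan_{A/R}^{-1}$ on $(\KacInj A)^{\mathrm c}$. Moreover, $\uGproj A$ is the stable category of the Frobenius exact category $\Gproj A$, so its suspension functor is the cosyzygy $\syz^{-1}$; consequently $\Sigma^{d-1}$ transports to $\syz^{1-d}$. Composing these two identifications, the Serre functor $\Sigma^{d-1}\wh\kan_{A/R}^{-1}$ becomes $\syz^{1-d}\gp(\omega_{A/R}\otimes_A-)$ on $\uGproj A$, as claimed.

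There is essentially no obstacle beyond checking that each functor in sight is $R$-linear, so that the Matlis lifts and local cohomology functors appearing in the Serre duality isomorphism of \intref{Proposition}{pr:Serre} are matched correctly across $\bfs\ires\bfg$. This is inherited from the $R$-linearity of each constituent equivalence, and once it is in hand the statement of \intref{Corollary}{cor:serre} translates verbatim, via the identifications above, into the advertised Serre duality on $\uGproj A$.
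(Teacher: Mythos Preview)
Your proof is correct and follows exactly the approach the paper intends: the paper gives no explicit argument (the corollary carries only a \qed), relying on the reader to transport the Serre functor of \intref{Corollary}{cor:serre} across the equivalences of \intref{Theorem}{thm:rob} using \intref{Proposition}{prp:Nak-G-sing}, which is precisely what you do. One small point worth noting is that \intref{Theorem}{thm:rob} furnishes equivalences only \emph{up to direct summands}, so the phrase ``preserved under any $R$-linear triangle equivalence'' is slightly glib; however, your appeal to \intref{Proposition}{prp:Nak-G-sing} already shows that $\wh\kan_{A/R}^{-1}$ preserves the essential image of $\bfs\ires\bfg$, which is exactly what is needed to restrict the Serre functor, so no real gap arises.
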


\begin{remark}
Set $S\colonequals \Omega^{1-d}\gp(\omega\otimes_A(-))$, the Serre functor on $\uGproj(A)$. \intref{Theorem}{cor:serre-mcm} translates to the statement that there is an $R$-linear \emph{trace} map
\[
\sHom_A(M,SM) \xra{\ \tau\ } I(\fm)
\]
such that the bilinear pairing 
\[
\sHom_A(N, SM) \times \sHom_A(M, N) \xra{\ -\circ-\ } \sHom_A(M, SM)) \xra{\ \tau\ } I(\fm)
\]
where $-\circ-$ is the obvious composition, is non-degenerate. Murfet~\cite{Murfet:2013} describes the trace map in the case when $A=R$, that is to say, in the case of commutative rings; this involves the theory of residues and differentials forms. It would be interesting to extend his work to the present context.
\end{remark}

We now prepare for the proof of \intref{Theorem}{thm:Gor}.

\begin{lemma}
\label{lem:rslocal}
Let $A$ be a finite $R$-algebra. For each $X$ in $\Loc(\ires A)$ for which $\ires X$ is in $\bfK^{+}(\Inj A)$, the isomorphism \eqref{lem:rs} induces isomorphisms
\[
\Sigma^{-1} \gam_{\fp}\bfs(\ires X)  \longiso \bfr \gam_{\fp}X \quad \text{for each $\fp\in\Spec R$.}
\]
\end{lemma}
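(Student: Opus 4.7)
The plan is to apply $\gam_\fp$ termwise to the exact triangle \eqref{eq:rho}, namely $\bfr X \to X \to \ires X \to \Sigma \bfr X$, and to show that the resulting triangle serves as the localisation triangle defining $\bfr(\gam_\fp X)$. Combined with Lemma \ref{lem:rs}, this will yield the desired isomorphism.

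Concretely, I would proceed as follows. First, I would observe that $\gam_\fp \bfr X$ is acyclic: this is immediate from Lemma \ref{lem:lipman}, since $\bfr X$ is acyclic. Second, I would show that $\gam_\fp \ires X$ is the K-injective resolution of $\gam_\fp X$. By hypothesis $\ires X$ is a bounded-below complex of injective $A$-modules, and hence so is $\gam_\fp \ires X$, by Lemma \ref{lem:gamE}; in particular $\gam_\fp \ires X$ is K-injective. To see that the natural map $\gam_\fp X \to \gam_\fp \ires X$ is a quasi-isomorphism, I would compute its cone termwise: since $\gam_\fp$ is additive and preserves mapping cones, this cone equals $\gam_\fp$ applied to the cone of $X \to \ires X$, which in turn is $\Sigma \gam_\fp \bfr X$, and is acyclic by the first observation.

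Given these two facts, the triangle $\gam_\fp \bfr X \to \gam_\fp X \to \gam_\fp \ires X \to$ has acyclic first term and its third term is the K-injective resolution of its middle term; by uniqueness of the localisation triangle associated to the recollement \eqref{eq:recollement}, this yields a natural isomorphism $\gam_\fp \bfr X \iso \bfr \gam_\fp X$. Applying the functor $\gam_\fp$ to the isomorphism $\bfr X \iso \Sigma^{-1} \bfs(\ires X)$ of Lemma \ref{lem:rs} (valid since $X \in \Loc(\ires A)$), and composing with the inverse of the identification just obtained, yields the desired isomorphism $\Sigma^{-1} \gam_\fp \bfs(\ires X) \iso \bfr \gam_\fp X$.

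The main obstacle, and the only place the bounded-below hypothesis is used, is the verification that $\gam_\fp \ires X$ is K-injective; without that hypothesis this can fail, as noted in Remark \ref{rem:kac}. Once this is established, the rest is formal.
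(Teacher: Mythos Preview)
Your proof is correct and follows essentially the same route as the paper's: apply $\gam_\fp$ to the triangle $\bfr X \to X \to \ires X \to$, observe that the first term stays acyclic (Lemma~\ref{lem:lipman}) and the third term stays bounded below and hence K-injective, and conclude from the uniqueness of the recollement triangle. The paper phrases the last step as ``apply $\bfr$ to the $\gam_\fp$-triangle'' rather than ``identify the $\gam_\fp$-triangle as the localisation triangle,'' and it substitutes $\Sigma^{-1}\bfs(\ires X)$ for $\bfr X$ at the outset (via Lemma~\ref{lem:rs}) rather than at the end, but these are only cosmetic differences. One small citation to adjust: the bounded-belowness of $\gam_\fp(\ires X)$ follows from Lemma~\ref{lem:bb} (since $\gam_\fp = \gam_{V(\fp)}\circ(-)_\fp$ is a subquotient), not directly from Lemma~\ref{lem:gamE}; and your separate verification that $\gam_\fp X \to \gam_\fp \ires X$ is a quasi-isomorphism is redundant, since it is immediate from the acyclicity of $\gam_\fp \bfr X$ and the exactness of the triangle.
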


\begin{proof}
Since $X$ is in $\Loc(\ires A)$, from \eqref{eq:rho} and \intref{Lemma}{lem:rs} we get an exact triangle
\[
\Sigma^{-1}\bfs(\ires X) \lra X \lra \ires X\lra 
\]
Applying $\gam_{\fp}$ to this yields the exact triangle
\[
\Sigma^{-1} \gam_{\fp} (\bfs(\ires X))  \lra \gam_{\fp} X \lra \gam_{\fp}(\ires X) \lra 
\]
Since $\bfs(\ires X)$ is acyclic so is the complex $\gam_{\fp} (\Sigma^{-1}\bfs(\ires X))$, by \intref{Lemma}{lem:lipman}. Hence $\bfr(-)$ is (isomorphic to) the identity on this complex.  On the other hand since $\ires X$ is bounded below, so is $\gam_{\fp}(\ires X)$, by \intref{Lemma}{lem:bb}, and hence $\bfr(-)$ vanishes on this complex. Keeping these observations in mind and applying the functor $\bfr$ to the exact triangle above yields the stated isomorphism.
\end{proof}

\begin{proof}[Proof of \intref{Theorem}{thm:Gor}]
It suffices to establish the result for objects of the form $\bfs(C)$ where $C\in \KInj A$ is a compact object; we can assume $C$ is bounded below. 
Set $D\colonequals \Hom_R(A,\ires R)$. We shall be interested in the complex of injective $A$-modules
\[
X\colonequals D \otimes_A \pres(\wh\kan C)\,.
\]
We claim that this complex satisfies the hypotheses of \intref{Lemma}{lem:rslocal}, namely

\begin{claim}
$X$ is in $\Loc(\ires A)$ and $\ires X\iso C$ and, in particular, it is bounded below.

\smallskip

Indeed,  the complex $D$ consists of $A$-bimodules that are injective on either side and the map $R\to \ires R$ induces a quasi-isomorphism 
\[
\omega=\Hom_R(A,R)\lra \Hom_R(A,\ires R) = D
\]
of $A$-bimodules. Thus $D$ is an injective resolution of $\omega$ on both sides. It follows that in $\dcat A$ there are natural isomorphisms  
\[
D\otimes_A\pres(\wh\kan C) \cong \omega\lotimes_A \RHom_A(\omega,C) \cong C
\]
where the second one is by \intref{Theorem}{thm:Morita-D}. Therefore in $\KInj A$ one gets that
\[
\ires X = \ires (D \otimes_A \pres(\wh\kan C))\longiso C. 
\]

As to the first part of the claim, $\pres(\wh\kan C)$ is in $\Loc(A)\subseteq \KProj A$, hence $X$ is in $\Loc(D)$ in $\KInj A$. However  $D$ is an injective resolution of $\omega$ and the latter is perfect, as an object of $\dcat A$, so $D$ is in $\Thick(\ires A)$. It follows that $X$ is in $\Loc(\ires A)$, as claimed.

\end{claim}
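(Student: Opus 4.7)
The plan is to establish the two parts of the claim in turn: first compute $\ires X$ by reducing to the derived Nakayama equivalence, then locate $X$ inside the localizing subcategory generated by $\ires A$ using perfection of $\omega$.

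First I would check that $D = \Hom_R(A,\ires R)$ is an injective resolution of the bimodule $\omega_{A/R}$. Since $A$ is $R$-projective, the functor $\Hom_R(A,-)$ is exact, so the quasi-isomorphism $R \longiso \ires R$ induces a quasi-isomorphism $\omega = \Hom_R(A,R) \longiso D$. The same $R$-projectivity of $A$ guarantees, by the usual Hom--tensor adjunction, that each term of $D$ is injective both as a left and as a right $A$-module. In particular $D$ is a complex of injective left $A$-modules quasi-isomorphic to $\omega$ in $\dcat A$.

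Next, I would compute $\ires X$ using this identification. Regarding $X = D\otimes_A \pres(\wh\kan C)$ as a complex in $\dcat A$, one has
\[
X \;\simeq\; \omega \lotimes_A \RHom_A(\omega,C)
\]
because $D$ represents $\omega$ derivedly on the left and $\pres(\wh\kan C)$ represents $\RHom_A(\omega,C)$ via \intref{Lemma}{lem:cNak} and the fact that $\wh\kan C \cong \ires\RHom_A(\omega,C)$ on bounded-below objects. By \intref{Theorem}{thm:Morita-D} this is canonically isomorphic to $C$ in $\dcat A$. Since $C$ is a bounded below compact, its K-injective resolution may be taken bounded below, giving $\ires X \iso C$ in $\KInj A$ and the bounded-below assertion.

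Finally, to place $X$ in $\Loc(\ires A)$, I would argue in two steps. The K-projective resolution $\pres(\wh\kan C)$ lies in $\Loc(A)$ inside $\KProj A$ by standard generation of $\KProj A$. Since the exact functor $D\otimes_A (-)\col \KProj A\to \KInj A$ preserves coproducts and triangles, this immediately yields $X \in \Loc(D)$ in $\KInj A$. It therefore suffices to show $D \in \Thick(\ires A)$. But $\omega$ is perfect in $\dcat A$ by \intref{Theorem}{thm:Morita-D}, hence lies in $\Thick(A)$ in $\dcat A$; applying $\ires$ gives $\ires\omega \in \Thick(\ires A)$ in $\KInj A$, and $D$ is isomorphic to $\ires\omega$ there since it is K-injective and quasi-isomorphic to $\omega$. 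This completes the proof.

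The only point requiring care is the second step: one must justify the isomorphism $D\otimes_A \pres(\wh\kan C)\cong \omega \lotimes_A \RHom_A(\omega,C)$ in $\dcat A$, i.e., that $D\otimes_A (-)$ computes the derived tensor product $\omega\lotimes_A (-)$ on K-projectives (immediate from $D\simeq\omega$ in $\dcat A$ and K-projectivity of $\pres(\wh\kan C)$), and that $\pres(\wh\kan C)$ is a K-projective model for $\RHom_A(\omega,C)$, which follows from \intref{Lemma}{lem:cNak} applied to the bounded-below compact $C$.
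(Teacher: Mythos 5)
Your argument is correct and follows essentially the same route as the paper: identify $D$ as an injective resolution of $\omega$ on both sides, use Lemma \ref{lem:cNak} and Theorem \ref{thm:Morita-D} to get $X\cong C$ in $\dcat A$ and hence $\ires X\iso C$, and then deduce $X\in\Loc(D)\subseteq\Loc(\ires A)$ from $\pres(\wh\kan C)\in\Loc(A)$ together with the perfection of $\omega$. The extra details you supply (K-injectivity of $D$, why $\pres(\wh\kan C)$ is a K-projective model for $\RHom_A(\omega,C)$) are exactly the points the paper leaves implicit, and they are handled correctly.
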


From the claim and \intref{Lemma}{lem:rslocal} we deduce that
\[
\Sigma^{-1}\gam_{\fp}\bfs(\ires X) \cong \bfr \gam_{\fp}X \quad \text{for each $\fp\in\Spec R$.}
\]
This justifies the penultimate isomorphism below, where $h$ stands for $\dim R_\fp$:
\begin{align*}
T_{\fp}(\wh\kan (\bfs C))
	&\cong T_{\fp}(\bfs(\wh\kan C)) \\
        & \cong \bfr(\Hom_R(A, I(\fp))  \otimes_A \pres(\wh\kan C)) ) \\
	& \cong \bfr (\Hom_R(A, \Sigma^{h} \gam_{\fp}{\ires R})  \otimes_A \pres(\wh\kan C))) \\
	&\cong \Sigma^{h}\bfr\gam_{\fp}(\Hom_R(A,\ires R)  \otimes_A \pres(\wh\kan C)) \\
	&= \Sigma^{h}\bfr\gam_{\fp}(X) \\
	&\cong \Sigma^{h-1} \gam_{\fp}\bfs(\ires X)\\
	&\cong \Sigma^{h-1} \gam_{\fp}\bfs(C)
\end{align*}
The first isomorphism is by \intref{Theorem}{thm:Morita-K}; the second is by \intref{Corollary}{cor:TsT}; the third is from \eqref{eq:gd-commutative} that applies as $R$ is Gorenstein, by \intref{Lemma}{lem:RGor}. The last isomorphism is again by the claim  above. This finishes the proof.
\end{proof}

In contrast with \intref{Proposition}{prp:Gor-D} and \intref{Proposition}{prp:Gor-K}, we do not know if the Gorenstein property of $\KacInj A$ characterises Gorenstein algebras; except when $A$ is commutative. 

\begin{theorem}
\label{thm:Kinj-gor}
Let $R$ be a commutative noetherian ring. The $R$-linear category $\KacInj R$ is Gorenstein if and only if the ring $R$ is Gorenstein.
\end{theorem}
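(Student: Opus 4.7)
The forward implication is essentially \intref{Theorem}{thm:Gor} applied with $A = R$. When $R$ is commutative noetherian Gorenstein, it is trivially a Gorenstein $R$-algebra over itself: all three defining conditions of \intref{Section}{se:Gorenstein-dc} are satisfied, as $R$ is free (hence faithful and projective) over itself, and each localisation $R_\fp$ has finite self-injective dimension by hypothesis. \intref{Theorem}{thm:Gor} then yields that $\KacInj R$ is Gorenstein, with shift function $d(\fp) = -\dim R_\fp + 1$ and global Serre functor $\wh\kan_{R/R}^{-1}$.

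For the backward implication, suppose $\KacInj R$ is Gorenstein with Serre functor $F$ and shift function $\fp\mapsto d(\fp)$. I need to show $R_\fm$ has finite injective dimension for every maximal ideal $\fm$. When $\fm$ does not lie in the singular locus, $R_\fm$ is regular (hence Gorenstein), so there is nothing to check. When $\fm$ lies in the singular locus we have $\fm\in \supp_R\KacInj R$ by \intref{Corollary}{cor:suppKacinj}. By the $R$-linearity of $F$ and the compatibility of the stabilisation $\bfs$ with localisation (\intref{Remark}{rem:s-local}), the Gorenstein property descends to $\KacInj R_\fm$. This reduces the problem to the case in which $R$ is local with maximal ideal $\fm$ lying in the singular locus.

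With $R$ local, \intref{Proposition}{pr:Serre} supplies the Serre duality
\[
\Hom_R(\Hom_{\KacInj R}(X,Y), I(\fm)) \cong \Hom_{\KacInj R}(Y, \Sigma^{-d(\fm)}FX)
\]
for any compact $X\in (\KacInj R)^{\mathrm c}$ and any $Y \in \KacInj R$, since all such compact objects are automatically $\fm$-supported. Mimicking the strategy of the implication $(2)\Rightarrow(1)$ of \intref{Proposition}{prp:Gor-D}, I would specialise to $X = \bfs(\ires k)$ with $k = R/\fm$, and $Y = \bfs(\ires M)$ for $M\in \mod R$. Using adjunction $(\bfs,\mathrm{incl})$, together with the triangle of \intref{Lemma}{lem:lambda2} and the isomorphism of \intref{Lemma}{lem:rslocal}, the left-hand side translates into the Matlis dual of $\Ext^*_R(k,M)$. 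On the right-hand side, $FX\in(\KacInj R)^{\mathrm c}$ is a fixed compact object which, under Krause's identification $(\KacInj R)^{\mathrm c}\simeq \dsing(R)$ (up to direct summands), corresponds to a fixed bounded complex; hence $\Hom_{\KacInj R}(\bfs\ires M,\Sigma^j FX)$ vanishes for $|j|\gg 0$, uniformly in $M$. Combined, this shows $\Ext^i_R(k,R)=0$ for $i\gg 0$, which by Bass's characterisation forces $R$ to have finite injective dimension.

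The main obstacle is the last step: carefully converting morphisms in $\KacInj R$ into honest $\Ext$ modules of $R$-modules, and controlling how the Serre-dual boundedness on the right transfers to eventual vanishing of $\Ext^i_R(k,R)$ on the left. The bridge is \intref{Lemma}{lem:rslocal}, which identifies $\bfr\gam_\fm$ with a shift of $\gam_\fm\bfs\ires$ and thereby links local cohomology in $\KInj R$ to morphism groups in $\KacInj R$. Once this translation is carried out, the finite injective dimension of $R_\fm$ follows, and letting $\fm$ range over $\Spec R$ completes the proof that $R$ is Gorenstein.
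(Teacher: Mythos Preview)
Your forward implication is fine and matches the paper. The backward implication, however, has a genuine gap that you yourself flag as ``the main obstacle'' but do not actually overcome.

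The core problem is that morphisms in $\KacInj R$ are \emph{Tate} cohomology groups, not ordinary $\Ext$ groups. Your claim that the left-hand side of the Serre duality ``translates into the Matlis dual of $\Ext^*_R(k,M)$'' is not correct: after using the adjunction $(\bfs,\mathrm{incl})$ you obtain $\KHom(\ires k,\Sigma^j\bfs\ires M)$, which differs from $\Ext^j_R(k,M)$ by a term coming from $\ipres\bfq(\ires M)$ in the localisation triangle; this discrepancy is precisely what distinguishes Tate cohomology from ordinary $\Ext$. In particular, the special case $M=R$ is vacuous, since $\bfs\ires R=0$. Your fallback, the uniform vanishing of $\KHom(\bfs\ires M,\Sigma^j FX)$ for $|j|\gg 0$, also fails: a compact object of $\KacInj R$ does not behave like a perfect complex in $\dcat R$, and for non-Gorenstein $R$ the groups $\tExt^j_R(k,k)$ are nonzero in infinitely many degrees. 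So the strategy of \intref{Proposition}{prp:Gor-D} does not transplant to $\KacInj R$, and \intref{Lemma}{lem:rslocal} does not bridge the gap.

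The paper's argument is quite different and does not attempt to recover ordinary $\Ext$. It applies Serre duality \emph{twice}, with both $X$ and $Y$ equal to $\bfs\ires k$, to obtain
\[
\tExt^0_R(k,k)\cong \tExt^0_R(k,k)^{\vee\vee}\,,
\]
where $(-)^\vee=\Hom_R(-,I(\fm))$. Since these are $k$-vector spaces, the double-dual isomorphism forces $\tExt^0_R(k,k)$ to be finite-dimensional. The conclusion that $R_\fm$ is Gorenstein then follows from the theorem of Avramov and Veliche \cite[Theorem~6.4]{Avramov/Veliche:2007}, which says that finiteness of $\dim_k\tExt^i_R(k,k)$ for even a single $i$ forces $R_\fm$ to be Gorenstein. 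This external input is essential; there is no elementary route from the Serre duality on $\KacInj R$ to the vanishing of $\Ext^i_R(k,R)$.
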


\begin{proof}
The reverse implication is contained in \intref{Theorem}{thm:Gor}.   

Suppose $\KacInj R$ is Gorenstein as an $R$-linear category, with global Serre functor $F$. Let $\fm$ be a maximal ideal of $R$, and $k\colonequals R/\fm$, its residue field. The object $\bfs\ires k$ in $\KacInj A$ is compact and $\fm$-torsion so \intref{Proposition}{pr:Serre} yields
\begin{align*}
\KHom(\bfs\ires k, \bfs\ires k)^{\vee\vee}
	&\cong \KHom(\bfs\ires k, \Sigma^{d(\fm)}F(\bfs\ires k))^{\vee} \\
	&\cong \KHom(\Sigma^{d(\fm)} F(\bfs\ires k), \Sigma^{d(\fm)} F(\bfs\ires k)) \\
	&\cong \KHom(\bfs\ires k,\bfs\ires k) 
\end{align*}
Thus one gets an isomorphism of Tate cohomology modules
\[
\tExt^{0}_{R}(k,k) \cong \tExt^{0}_{R}(k,k)^{\vee\vee} \quad\text{for each $i\in\bbN$.}
\]
These modules are annihilated by $\fm$, so are $k$-vector spaces. The isomorphism above implies that each of them has finite rank over $k$. It remains to recall the result of Avramov and Veliche~\cite[Theorem~6.4]{Avramov/Veliche:2007} that the finiteness of the rank of $\tExt^{i}_{R}(k,k)$ for \emph{some} $i$ already implies $R_{\fm}$ is Gorenstein.
\end{proof}

The proof of the preceding result does not go through for non-commutative rings, for there do exist  finite dimensional algebras $A$ over a field $k$ that are not Gorenstein, and yet $\tExt^i_A(M,N)$ is finite dimensional over $k$ for each $i$, and finite dimensional $A$-modules $M,N$; see, for example, \cite[Example~4.3, (1), (2)]{Chen:2009}.

\begin{appendix}

\section{Gorenstein approximations}
\label{se:appendix}

Let $\cat A$ be an additive category. Recall that a complex $X\in\bfK(\cat A)$ is called \emph{totally acyclic} if the complexes of abelian groups $\Hom(W,X)$ and $\Hom(X,W)$ are acyclic  for all $W\in\cat A$. When $\cat A$ is abelian and $\cat C\subseteq\cat A$ is a class of objects, we set
\begin{align*}
^\perp\cat C &=\{X\in\cat A\mid\Ext^n(X,Y)=0\text{ for all }Y\in\cat C,\, n>0\} \\
\cat C^\perp & =\{Y\in\cat A\mid\Ext^n(X,Y)=0\text{ for all }X\in\cat C,\, n>0\}.
\end{align*}
A pair  $(\cat X,\cat Y)$ of full  subcategories of $\cat A$ is a (hereditary and complete) \emph{cotorsion pair} for $\cat A$ if
\[
\cat X^\perp =\cat Y\qquad\text{and}\qquad\cat X={^\perp\cat Y}
\]
and every object $M\in\cat A$ fits into exact \emph{approximation sequences}
\begin{equation*}
\label{eq:tilt-cotorsion}
0\lra Y_M\lra X_M\lra M\lra 0\qquad\text{and}\qquad 0\lra M\lra Y^M\lra X^M\lra 0
\end{equation*}
with $X_M,X^M\in\cat X$ and $Y_M,Y^M\in\cat Y$.

\subsection*{Gorenstein algebras}
Fix a ring $A$. Recall that an $A$-module is \emph{G-projective} if it is of the form 
 \[
 C^0(X)\colonequals \Coker (X^{-1}\xra{d^{-1}}X^0)
 \]
 for a  totally acyclic $X\in\bfK(\Prj A)$. The   \emph{G-injective} modules are those of the form
  \[
  Z^0(X) \colonequals \Ker (X^0\xra{d^0}X^1)
 \]  
for some totally acyclic $X\in\bfK(\Inj A)$. We write $\GProj A$ for the full subcategory of all G-projective modules and $\GInj A$ for the full subcategory of all G-injective modules The theorem below provides Gorenstein approximations for all modules over a Gorenstein algebra.

Let  $\Fin A$ be the full subcategory of $A$-modules having finite projective and finite injective dimension. When $A$ is a finite $R$-algebra we consider the category
\[
\Fin(A/R)\colonequals \{M\in \Mod A\mid M_\fp\in\Fin(A_\fp)\text{ for all } \fp\in\Spec R\}.
\]
Observe that when the $R$-algebra $A$ is Gorenstein $\Fin(A_\fp/R)$ is the category of $A_\fp$-modules of finite projective---equivalently, finite injective---dimension. One of the consequences of the result below is that, at least for Gorenstein algebras, $\Fin(A/R)$ is independent of the ring $R$.

\begin{theorem}
\label{thm:g-approximations}
  Let $A$ be a Gorenstein $R$-algebra. Then there are equalities
  \[
  (\GProj A)^\perp=\Fin(A/R) = {^\perp(\GInj A)}
\] 
Also $(\GProj A, \Fin(A/R))$ and $(\Fin(A/R),\GInj A)$ are cotorsion pairs for $\Mod A$.
\end{theorem}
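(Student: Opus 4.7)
The plan is to reduce to the classical Iwanaga-Gorenstein cotorsion pair theorems (Hovey, Enochs-Jenda, Beligiannis) applied pointwise on $\Spec R$, combined with a global small-object argument to produce approximation sequences in $\Mod A$. Since each $A_\fp$ is Iwanaga-Gorenstein, the local versions of both cotorsion pairs hold with right (respectively left) class equal to the modules of finite injective (equivalently, finite projective) dimension. I will concentrate on the left pair $(\GProj A,\Fin(A/R))$; the dual pair $(\Fin(A/R),\GInj A)$ will then follow either by an analogous argument using G-injectives or by transporting the first pair through the Nakayama equivalence $\wh\kan_{A/R}$ of \intref{Theorem}{thm:Morita-K}.

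The first step is the orthogonality equality $\Fin(A/R)=(\GProj A)^\perp$. For the inclusion $\supseteq$, fix $M\in\Fin(A/R)$ and a G-projective $G$; since $\Ext^n_A(G,M)$ carries a natural $R$-module structure, its vanishing can be tested locally at each prime. By \intref{Lemma}{lem:Gproj-props}(1), $G_\fp$ is G-projective over $A_\fp$ and $M_\fp$ has finite injective dimension, so the Iwanaga-Gorenstein case gives $\Ext^n_{A_\fp}(G_\fp,M_\fp)=0$. Passing back to $A$ uses dimension-shifting along the infinite coresolution of $G$ by projectives -- whose successive cosyzygies remain G-projective -- past the local injective dimension of $M_\fp$, followed by standard Ext-localisation for the resulting finitely generated truncation. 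For $\subseteq$, I would lift finitely generated G-projectives from $A_\fp$ to $A$ using \intref{Lemma}{le:Ext1=gp}, then invoke Bass's characterisation of finite injective dimension via Ext-vanishing on a test set of finitely generated modules to conclude $M_\fp\in\Fin(A_\fp)$ for every $\fp$.

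Existence of the approximation sequences for $(\GProj A,\Fin(A/R))$ will be obtained by applying the Eklof-Trlifaj theorem to the (essentially small) set of isomorphism classes of finitely generated G-projective $A$-modules. This produces a complete cotorsion pair whose right class is $(\Gproj A)^\perp$, and a restricted version of the preceding argument identifies this class with $\Fin(A/R)$. The main obstacle will be identifying the resulting left class with all of $\GProj A$: the inclusion into $\GProj A$ is clear from closure of G-projectivity under direct sums, direct summands, and transfinite extensions, while the reverse requires showing that $\GProj A$ is \emph{deconstructible}, i.e., that every G-projective admits a filtration by finitely generated G-projectives. I expect this to be handled by a set-theoretic cardinal-counting argument, using as input that $\omega_{A/R}$ admits a finite resolution by finitely generated projective bimodules by \intref{Lemma}{lem:biperfect}; this bounds the cardinality of the complete projective resolutions needed to witness G-projectivity.

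The symmetric statement $(\Fin(A/R),\GInj A)$, together with the identification $\Fin(A/R)={^\perp(\GInj A)}$, follows by the analogous development for G-injective modules -- using the right-sided finite projective resolution of $\omega_{A/R}$ and a test set of finitely generated G-injectives -- or, more economically, by applying the Nakayama equivalence $\wh\kan_{A/R}$, which preserves $\Fin(A/R)$ and, via the equivalences of \intref{Theorem}{thm:documenta} together with the stable category identifications, locally interchanges the G-projective and G-injective sides.
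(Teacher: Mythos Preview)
Your approach differs substantially from the paper's and has a real gap. The paper does not invoke Eklof--Trlifaj or deconstructibility; it reads the approximation sequences directly off the recollement \eqref{eq:recollement}. For $M\in\Mod A$ the triangle $\ipres\bfq(\ires M)\to\ires M\to\bfs(\ires M)\to$ in $\KInj A$ is degreewise split, and applying $Z^0(-)$ produces the G-injective approximation sequence in $\Mod A$; that this yields a cotorsion pair is \cite[Theorem~7.12]{Krause:2005}. The G-projective side is obtained by the parallel construction in $\KProj A$, transported via the equivalence $\bfh$ of \intref{Theorem}{thm:documenta}. The orthogonal class is then identified with $\Fin(A/R)$ not by computing $\Ext$ between arbitrary modules, but by checking that these particular approximation triangles are compatible with localisation at each $\fp\in\Spec R$ (using \intref{Remark}{rem:s-local}), thereby reducing to the Iwanaga--Gorenstein case.

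The gap in your proposal is the inclusion $\Fin(A/R)\subseteq(\GProj A)^\perp$. For an infinitely generated G-projective $G$, the map $\Ext^n_A(G,M)_\fp\to\Ext^n_{A_\fp}(G_\fp,M_\fp)$ need not be an isomorphism, so local vanishing does not give global vanishing. Your dimension-shifting along the projective coresolution of $G$ replaces $G$ by another large G-projective cosyzygy and never produces anything finitely generated; and since the local injective dimensions of $M_\fp$ are not uniformly bounded in $\fp$, you cannot shift past all of them simultaneously. Without this inclusion you cannot conclude $\GProj A\subseteq{^\perp((\Gproj A)^\perp)}$, which is precisely what is needed to identify the Eklof--Trlifaj left class with $\GProj A$; your remark about the bimodule resolution of $\omega_{A/R}$ does not bear on this filtration question. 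The suggestion to obtain the G-injective pair from the G-projective one via $\wh\kan_{A/R}$ is also not viable as stated: $\wh\kan_{A/R}$ is an autoequivalence of $\KInj A$, not of $\Mod A$, and does not exchange G-projective with G-injective modules.
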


The map $X_M\to M$ for $\cat X=\GProj A$ is called the \emph{G-projective  approximation}; we set $\gp(M)=X_M$. This module is unique up to morphisms that factor through a projective module. Analogously, the map $M\to Y^M$ for $\cat Y=\GInj A$ is called \emph{G-injective approximation} and we set $\gi(M)=Y^M$; it is unique up to morphisms that factor through an injective module.

\begin{proof}
  First observe that any acyclic complex of projective or injective $A$-modules is totally acyclic, by  \intref{Theorem}{thm:documenta}. This means that G-projective and  G-injective modules are obtained from acyclic complexes.

 We begin with the construction of G-injective approximations, using the recollement \eqref{eq:recollement} as follows. Set $\cat Y=\GInj A$ and $\cat X={^\perp\cat Y}$. Fix an $A$-module $M$. Then an injective resolution $\ires M$ fits into an exact triangle
\[
\bfj\bfq(\ires M)\lra \ires M\lra\bfs(\ires M)\lra
\]
given by an exact sequence of complexes 
\[
0\lra \ires M\lra\bfs(\ires M)\lra \Sigma(\bfj\bfq(\ires M))\lra 0
\] 
which is split-exact in each degree. Thus $Z^0(-)$  gives an exact sequence 
\[
0\lra M\lra Y^M\lra X^M\lra 0
\]
with $X^M\in\cat X$ and $Y^M\in\cat Y$. The other sequence $0\to Y_M\to X_M\to M\to 0$ is obtained by rotating this triangle. This justifies the claim that $(\cat X,\cat Y)$ is a cotorsion pair; see \cite[Theorem~7.12]{Krause:2005} for details.

It remains to identify $\cat X$, the left orthogonal to $\GInj A$.  A
standard argument yields the equality $\cat X=\Fin A$ when $A$ is
Iwanaga-Gorenstein. For a Gorenstein algebra $A$ the equality $\cat
X=\Fin (A/R)$ follows once we can show that for each $\fp\in\Spec R$
the $\fp$-localisation of an approximation sequence for $M\in\Mod A$
yields an approximation sequence for $M_\fp$ in $\Mod A_\fp$. It
follows from the discussion in \intref{Remark}{rem:s-local} that for any $A$-module $M$ one has isomorphisms
\[
(\bfs(\ires M))_\fp\cong \bfs_\fp((\ires M)_\fp)\cong
  \bfs_\fp(\ires_\fp M_\fp).
  \] 
This implies  $(X^M)_\fp\cong X^{M_\fp}$ and $(X_M)_\fp\cong X_{M_\fp}$. Thus both modules have finite projective and finite injective dimension. We conclude that $\cat X=\Fin (A/R)$.

Next we consider G-projective approximations using the analogue of the recollement \eqref{eq:recollement} for $\KProj A$. The proof that $(\GProj A, (\GProj A)^\perp)$ is a cotorsion pair is similar to the one for $\GInj A$, for it uses the right adjoint of the inclusion $\KacProj A\hookrightarrow\KProj A$; we omit the details. The equality $(\GProj A)^\perp=\Fin (A/R)$ can be verified as follows.  Recall from
\intref{Theorem}{thm:documenta} that there is an adjoint pair of
triangle equivalences
\[
\begin{tikzcd}
\KacProj A  \arrow[rightarrow,yshift=1ex]{rr}{E\otimes_A-}
 	&& \KacInj A \arrow[rightarrow,yshift=-1ex]{ll}{\bfh}[swap]{\sim}
\end{tikzcd}
\]
Consider the exact triangle
\[
E\otimes_A \pres M\lra \ires M\lra \bfs(\ires M)\lra
\]
from \intref{Lemma}{lem:lambda2} which we used for constructing G-injective approximation of $M$. Applying the equivalence $\bfh$ and rotating yields an exact triangle
\[
\Sigma^{-1}\bfh \bfs(\ires M)\lra \pres M\lra \bfh(\ires M) \lra
\]
which provides us with the G-projective approximation of
$M$. We claim that for each $\fp\in\Spec R$ the $\fp$-localisation of this
triangle yields the Gorenstein-projective approximation of $M_\fp$. To
this end consider the following diagram of exact functors.
\begin{equation*}
\begin{tikzcd}[row sep=large]
  \KProj A \arrow[yshift=.75ex]{rr}{E\otimes_A-}
  \arrow[twoheadrightarrow,xshift=-.75ex,swap]{d}{(-)_\fp}&& \KInj A
  \arrow[yshift=-.75ex]{ll}{\bfh}
  \arrow[twoheadrightarrow,xshift=-.75ex,swap]{d}{(-)_\fp}\\
 \KProj{A_\fp} \arrow[yshift=.75ex]{rr}{E_\fp\otimes_{A_\fp}-}
  \arrow[tail,xshift=.75ex,swap]{u}{\mathrm{res}}
 && \KInj{A_\fp}
  \arrow[yshift=-.75ex]{ll}{\bfh_\fp}
 \arrow[tail,xshift=.75ex,swap]{u}{\mathrm{res}}
\end{tikzcd}
\end{equation*}
It is easily checked that for each $A$-module $M$ one has isomorphisms
\[
(\bfh\bfs(\ires M))_\fp\cong \bfh_\fp((\bfs(\ires M))_\fp) \cong \bfh_\fp\bfs_\fp(\ires_\fp M_\fp).
  \] 
This implies that $(Y^M)_\fp\cong Y^{M_\fp}$ and $(Y_M)_\fp\cong Y_{M_\fp}$. Thus both modules have finite projective and finite injective dimension, so $(\GProj A)^\perp=\Fin (A/R)$.
\end{proof}

\begin{remark}
The above theorem  shows that Gorenstein algebras are virtually
Gorenstein in the sense of Beligiannis and Reiten
\cite{Beligiannis/Reiten:2007}, which means that the classes $(\GProj A)^\perp$
and $^\perp(\GInj A)$ coincide.
\end{remark}

\end{appendix}


\begin{thebibliography}{10}

\bibitem{Auslander:1978a} M. Auslander, Functors and morphisms determined by
  objects, in {\it Representation theory of algebras (Proc. Conf.,
    Temple Univ., Philadelphia, Pa., 1976)}, 1--244. Lecture Notes in
  Pure Appl. Math., 37, Dekker, New York, 1978.
  
\bibitem{Auslander/Reiten:1991} M. Auslander\ and\ I. Reiten,
  Cohen-Macaulay and Gorenstein Artin algebras, in {\it Representation
    theory of finite groups and finite-dimensional algebras
    (Bielefeld, 1991)}, 221--245, Progr. Math., 95, Birkh\"{a}user,
  Basel, 1991
 
\bibitem{Avramov/Iyengar:2019a} L.~L.~Avramov and S.~B.~Iyengar,
  Homological dimensions over Noether algebras, appendix to
 {\it Support varieties and modules of finite projective dimension
    for modular Lie superalgebras}, Algebra Number Theory, {\bf 15} (2021), 1157--1180.
  
\bibitem{Avramov/Veliche:2007} L. L. Avramov\ and\ O. Veliche, Stable
  cohomology over local rings, Adv. Math. {\bf 213} (2007), no.~1,
  93--139.
  
\bibitem{Bass:1963} H. Bass, On the ubiquity of Gorenstein rings,
  Math. Z. {\bf 82} (1963), 8--28.

\bibitem{Bass:1968} H. Bass, {\it Algebraic $K$-theory},
  W. A. Benjamin, Inc., New York, 1968.

\bibitem{Beligiannis/Reiten:2007} A. Beligiannis\ and\ I. Reiten,
  Homological and homotopical aspects of torsion theories,
  Mem. Amer. Math. Soc. {\bf 188} (2007), no.~883, viii+207 pp.

\bibitem{Benson/Iyengar/Krause:2008a} D. Benson, S. B. Iyengar\ and\
  H. Krause, Local cohomology and support for triangulated categories,
  Ann. Sci. \'{E}c. Norm. Sup\'{e}r. (4) {\bf 41} (2008), no.~4,
  573--619.

\bibitem{Benson/Iyengar/Krause:2011a} D. Benson, S. B. Iyengar\ and\
  H. Krause, Stratifying triangulated categories, J. Topol. {\bf 4}
  (2011), no.~3, 641--666.

\bibitem{Benson/Iyengar/Krause/Pevtsova:2019a} D.~J. Benson,
  S.~B. Iyengar, H.~Krause, and J.~Pevtsova,
  Local duality for representations of finite group schemes,
  Compos. Math. {\bf 155} (2019), no.~2, 424--453.

\bibitem{Benson/Iyengar/Krause/Pevtsova:2020a} D.~J. Benson,
  S.~B. Iyengar, H.~Krause, and J.~Pevtsova, Local duality for
    the singularity category of a finite dimensional Gorenstein
    algebra, Nagoya Math. J., (2020), 1--24.

\bibitem{Bondal/Kapranov:1990a} A. I. Bondal\ and\ M. M. Kapranov,  Representable functors, Serre functors, and reconstructions,
    Math. USSR-Izv. {\bf 35} (1990), no.~3, 519--541; translated from
    Izv. Akad. Nauk SSSR Ser. Mat. {\bf 53} (1989), no. 6, 1183--1205,
    1337.

\bibitem{Bruns/Herzog:1998a} W. Bruns\ and\ J. Herzog, {\it
      Cohen-Macaulay rings}, Cambridge Studies in Advanced
    Mathematics, 39, Cambridge University Press, Cambridge, 1993.

  \bibitem{Buchweitz:1986} R.-O. Buchweitz, Maximal Cohen-Macaulay
    modules and Tate-cohomology, Mathematical Surveys and Monographs,
    262, Amer. Math. Soc., Providence, RI, 2021, published version of
    the 1986 preprint:
    \url{https://tspace.library.utoronto.ca/handle/1807/16682}.

\bibitem{Buchweitz/Green/Madsen/Solberg:2005} R.-O.~Buchweitz,
  E.~L.~Green, D.~Madsen, and \O.~Solberg, Finite Hochschild
  cohomology without finite global dimension, Math. Res. Lett. {\bf
    12} (2005), no.~5-6, 805--816.

\bibitem{Chen:2009}
X.-W. Chen, Singularity categories, Schur functors and triangular matrix rings, Algebr. Represent. Theory {\bf 12} (2009), no.~2-5, 181--191. 
  
\bibitem{Chen/Iyengar:2010}
X.-W. Chen\ and\ S. B. Iyengar, Support and injective resolutions of complexes over commutative rings, Homology Homotopy Appl. {\bf 12} (2010), no.~1, 39--44. 

\bibitem{CB1998} W. Crawley-Boevey, Infinite-dimensional modules in
  the representation theory of finite-dimensional algebras, in {\it
    Algebras and modules, I (Trondheim, 1996)}, 29--54, CMS
  Conf. Proc., 23 Amer. Math. Soc., Providence, RI, 1998.
  
\bibitem{Dwyer/Greenlees/Iyengar:2005} W. G. Dwyer,
  J. P. C. Greenlees\ and\ S. Iyengar, Duality in algebra and
  topology, Adv. Math. {\bf 200} (2006), no.~2, 357--402.
  
\bibitem{Enochs/Jenda:1995} E. E. Enochs\ and\ O. M. G. Jenda,
  Gorenstein injective and projective modules, Math. Z. {\bf 220}
  (1995), no.~4, 611--633.

\bibitem{Gabriel:1962} P. Gabriel, Des cat\'{e}gories ab\'{e}liennes,
  Bull. Soc. Math. France {\bf 90} (1962), 323--448.
  
\bibitem{Gabriel:1980} P. Gabriel, Auslander-Reiten sequences and
  representation-finite algebras, in {\it Representation theory, I
    (Proc. Workshop, Carleton Univ., Ottawa, Ont., 1979)}, 1--71,
  Lecture Notes in Math., 831, Springer, Berlin, 1980.

\bibitem{Gnedin/Iyengar/Krause:2022} W. Gnedin, S. B. Iyengar\ and\ H. Krause, A class of
  Gorenstein algebras and their dualities, in preparation.

\bibitem{Goto:1982} S. Goto, Vanishing of
  ${\rm Ext}\sp{i}\sb{A}(M,\,A)$, J. Math. Kyoto Univ. {\bf 22}
  (1982/83), no.~3, 481--484.

\bibitem{Goto/Nishida:2001} S. Goto\ and\ K. Nishida, Finite modules
  of finite injective dimension over a Noetherian algebra, J. London
  Math. Soc. (2) {\bf 63} (2001), no.~2, 319--335.
   
\bibitem{Goto/Nishida:2002} S. Goto\ and\ K. Nishida, Towards a theory
  of Bass numbers with application to Gorenstein algebras,
  Colloq. Math. {\bf 91} (2002), no.~2, 191--253.
    
\bibitem{Grothendieck:1957}   
A. Grothendieck, Th\'{e}or\`emes de dualit\'{e} pour les faisceaux
alg\'{e}briques coh\'{e}rents, in {\it S\'{e}minaire Bourbaki,
  Vol. 4}, Exp. 149, 169--193, Soc. Math. France, Paris, 1995

\bibitem{Grothendieck:1958} A.~Grothendieck, The cohomology theory of
  abstract algebraic varieties, in {\it Proc. Internat. Congress
    Math. (Edinburgh, 1958)}, 103--118, Cambridge Univ. Press, New
  York.

\bibitem{Happel:1988} 
D. Happel, {\it Triangulated categories in the representation theory of finite-dimensional algebras}, London Mathematical Society Lecture Note Series, 119, Cambridge University Press, Cambridge, 1988. 
    
\bibitem{Happel:1991a} D. Happel, On Gorenstein algebras,
  in {\it Representation theory of finite groups and
    finite-dimensional algebras (Bielefeld, 1991)}, 389--404,
  Progr. Math., 95, Birkh\"{a}user, Basel, 1991.

\bibitem{Hartshorne:1966} R. Hartshorne, {\it Residues and duality},
  Lecture notes of a seminar on the work of A. Grothendieck, given at
  Harvard 1963/64. With an appendix by P. Deligne. Lecture Notes in
  Mathematics, No. 20, Springer-Verlag, Berlin, 1966.

\bibitem{Hartshorne:1967} R. Hartshorne, {\it Local cohomology}, A
  seminar given by A. Grothendieck, Harvard University, Fall,
  1961. Lecture Notes in Mathematics, No. 41, Springer-Verlag, Berlin,
  1967.

\bibitem{Iyengar/Krause:2006} S. Iyengar\ and\ H. Krause, Acyclicity
  versus total acyclicity for complexes over Noetherian rings,
  Doc. Math. {\bf 11} (2006), 207--240.
  
\bibitem{Iwanaga:1978} Y. Iwanaga, On rings with self-injective
  dimension$\leq 1$, Osaka Math. J. {\bf 15} (1978), no.~1, 33--46.
  
\bibitem{Jorgensen:2005a} P. J\o rgensen, The homotopy category of
  complexes of projective modules, Adv. Math. {\bf 193} (2005), no.~1,
  223--232.

\bibitem{Keller-Vossieck:1988} B. Keller\ and\ D. Vossieck,
  Dualit\'{e} de Grothendieck-Roos et basculement,
  C. R. Acad. Sci. Paris S\'{e}r. I Math. {\bf 307} (1988), no.~11,
  543--546.

\bibitem{Krause:2003a} H. Krause, A short proof for Auslander's defect
  formula, Linear Algebra Appl. {\bf 365} (2003), 267--270.

\bibitem{Krause:2005} H. Krause, The stable derived
  category of a Noetherian scheme, Compos. Math. {\bf 141} (2005),
  no.~5, 1128--1162.
  
\bibitem{Krause/Le:2006a} H. Krause\ and\ J. Le, The Auslander-Reiten
  formula for complexes of modules, Adv. Math. {\bf 207} (2006),
  no.~1, 133--148.

\bibitem{Kvamme:2020} S. Kvamme, A generalization of the Nakayama
  functor, Algebr. Represent. Theory {\bf 23} (2020), no.~4,
  1319--1353.
  
\bibitem{Murfet:2013} D. Murfet, Residues and duality for singularity
  categories of isolated Gorenstein singularities, Compos. Math. {\bf
    149} (2013), no.~12, 2071--2100.
  
\bibitem{Nagata:1975} M. Nagata, {\it Local rings}, corrected reprint,
  Robert E. Krieger Publishing Co., Huntington, NY, 1975.
  
\bibitem{Neeman:1992} A. Neeman, The connection between the $K$-theory
  localization theorem of Thomason, Trobaugh and Yao and the smashing
  subcategories of Bousfield and Ravenel, Ann. Sci. \'{E}cole
  Norm. Sup. (4) {\bf 25} (1992), no.~5, 547--566.

\bibitem{Neeman:2008} A. Neeman, The homotopy category of flat
  modules, and Grothendieck duality, Invent. Math. {\bf 174} (2008),
  no.~2, 255--308.
  
\bibitem{Orlov:2004a} D. O. Orlov, Triangulated categories of
  singularities and D-branes in Landau-Ginzburg models, Proc. Steklov
  Inst. Math. {\bf 2004}, no.~3(246), 227--248; translated from
  Tr. Mat. Inst. Steklova {\bf 246} (2004), Algebr. Geom. Metody,
  Svyazi i Prilozh., 240--262.

\bibitem{Serre:1956} J.-P. Serre, Cohomologie et g\'{e}om\'{e}trie
  alg\'{e}brique, in {\it Proceedings of the International Congress of
    Mathematicians, 1954, Amsterdam, vol. III}, 515--520, Erven
  P. Noordhoff N.V., Groningen, 1956.
  
\bibitem{Stevenson:2011} G. Stevenson, Support theory via actions of
  tensor triangulated categories, J. Reine Angew. Math. {\bf 681}
  (2013), 219--254.
  
\bibitem{Za1969} A. Zaks, Injective dimension of semi-primary rings,
  J. Algebra {\bf 13} (1969), 73--86.
  
  
\end{thebibliography}
\end{document}